\DeclareMathOperator{\tb}{tb}
\DeclareMathOperator{\rot}{rot}
\newcommand{\R}{\mathbb{R}}
\newcommand{\Z}{\mathbb{Z}}
\newtheorem{prop}{Proposition}
\newtheorem{cor}{Corollary}
\newtheorem{theorem}{Theorem}
\newtheorem{conjecture}{Conjecture}
\newtheorem{lemma}{Lemma}
\newtheorem{obs}{Observation}
\newtheorem{question}{Question}
\title{Bounds on Mosaic Number of Legendrian Knots}
\author{Margaret Kipe, Samantha Pezzimenti, Leif Schaumann,\\ Luc Ta, Wing Hong Tony Wong}
\begin{document}

\maketitle

\begin{abstract}
    Mosaic tiles were first introduced by Lomonaco and Kauffman in 2008 to describe quantum knots, and have since been studied for their own right. Using a modified set of tiles, front projections of Legendrian knots can be built from mosaics as well. In this work, we compute lower bounds on the mosaic number of Legendrian knots in terms of their classical invariants. We also provide a class of examples that imply sharpness of these bounds in certain cases. An additional construction of Legendrian unknots provides an upper bound on the mosaic number of Legendrian unknots. We also adapt a result of Oh, Hong, Lee, and Lee to give an algorithm to compute the number of Legendrian link mosaics of any given size. Finally, we use a computer search to provide an updated census of known mosaic numbers for Legendrian knots, including all Legendrian knots whose mosaic number is 6 or less.
\end{abstract}

\section{Introduction}

\subsection{Legendrian Knots} \label{sec:basics}
Legendrian knots are an important object of study in the field of contact topology. While a (smooth) \textbf{knot} is any embedding of a circle in $\mathbb{R}^3$, a Legendrian knot has an extra geometric condition imposed by a contact structure. 

In this paper, we will consider knots in the \textbf{standard contact structure on $\mathbb{R}^3$}, pictured in Figure \ref{fig:ContactStructure}. It is the plane field spanned by the vectors $\partial_y$ and $\partial_x+y\partial_z$ at each point $(x,y,z)$ in $\mathbb{R}^3$, arising from the kernel of a differential one-form.
Notice that the planes are horizontal along the $x$-axis and twist in the $y$-direction. They approach verticality, but never reach it. The intersection of these planes with the plane $y=y_0$ has a negative slope if $y_0>0$ and has a positive slope if $y_0<0$. The planes translate in the $x$ and $z$ direction, so all change occurs in the $y$-direction. One important feature of the standard contact structure is that no surface can be everywhere tangent to these planes. However, curves satisfying this property do exist and are called \textit{Legendrian}. 
In particular, a \textbf{Legendrian knot (or link)} is a knot (or link) whose tangent space lies within the planes of the contact structure. For an intuitive introduction to Legendrian knots, we refer the reader to \cite{sabloff}. For a more formal treatment, we refer the reader to \cite{etnyre}.

\begin{figure}[h]
    \centering
    \includegraphics[scale=0.3]{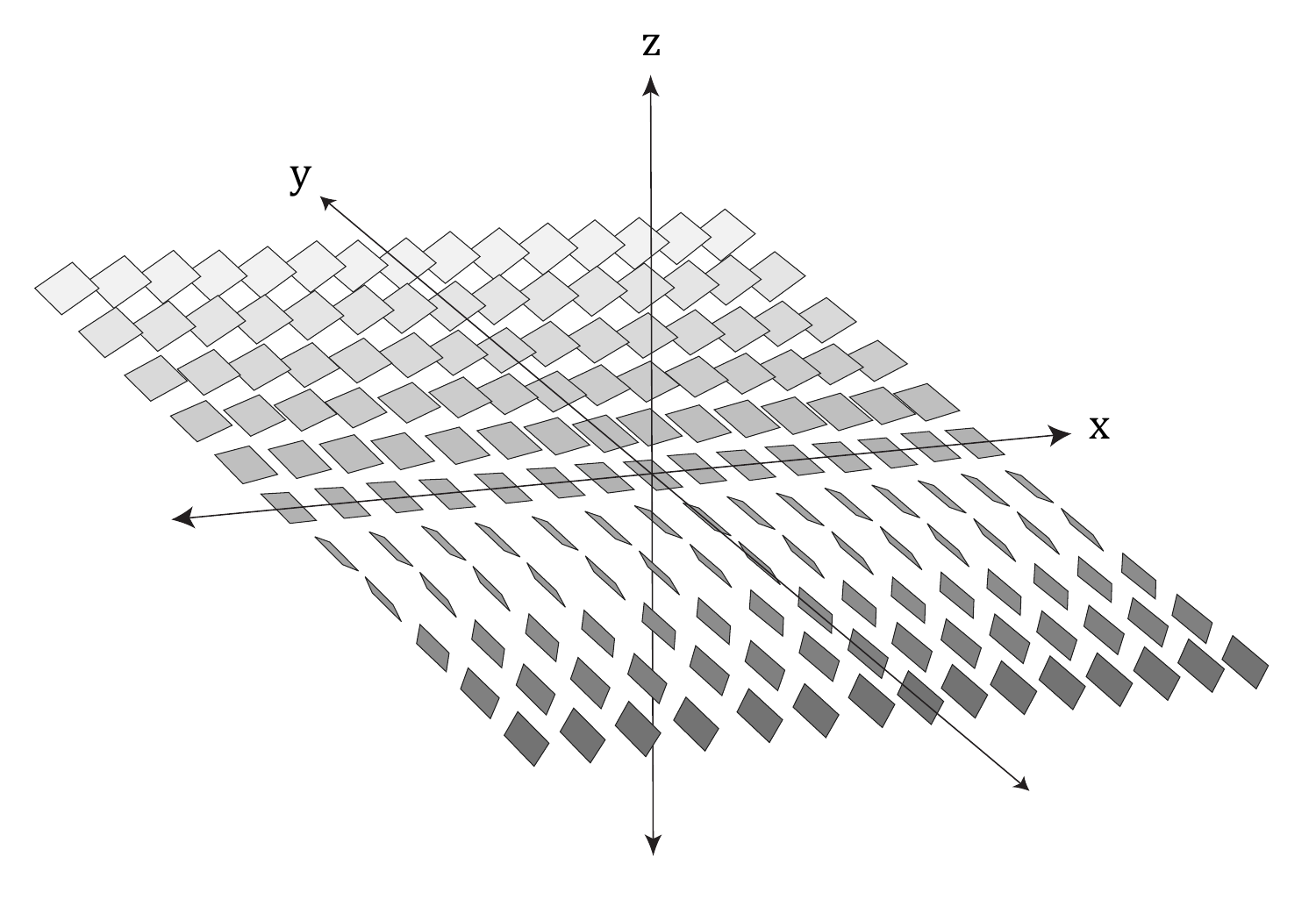}
    \caption{The standard contact structure on $\mathbb{R}^3$}
    \label{fig:ContactStructure}
\end{figure}

Legendrian knots are commonly displayed in either their $xy$ (Lagrangian) or $xz$ (front) projections. In this paper, we will use the latter. 
Front projections of Legendrian knots have two key features that distinguish them from smooth knots: (1) since tangent lines can never be vertical, they have cusps in place of vertical tangencies; and (2) the direction of twisting ensures the strand with the more negative slope is the overstrand. Figure \ref{fig:SmoothandLEg} shows a smooth positive trefoil and a Legendrian positive trefoil in its front projection. 
In this paper we will commonly refer to a  ``front projection of a Legendrian knot" as just a ``Legendrian knot."

\begin{figure}
    \centering
    \includegraphics[scale=0.3]{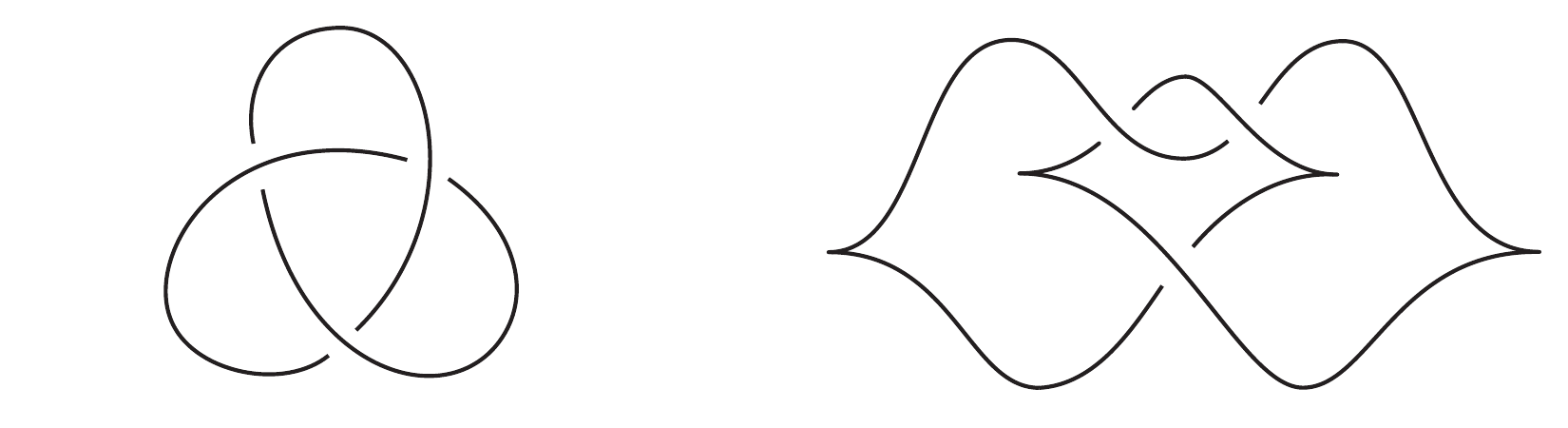}
    \caption{Smooth positive trefoil (left) and Legendrian positive trefoil (right).}
    \label{fig:SmoothandLEg}
\end{figure}

For a given smooth knot type, there are infinitely many Legendrian knots of that type. One way to distinguish these are through their \textit{classical invariants}---namely, the Thurston-Bennequin ($\tb$) and rotation  ($\rot$) numbers. These invariants can be computed from an oriented front projection diagram. The \textbf{Thurston-Bennequin number} of a Legendrian knot $\Lambda$ is computed by
$$\tb(\Lambda) = P - N - \frac{1}{2} C,$$
where $P$ is the number of positive crossings, $N$ is the number of negative crossings, and $C$ is the number of cusps in the diagram.
(Note that the quantity $P-N$ is often called the \textbf{writhe} $w(\Lambda)$ of $\Lambda$, which is not an invariant.)
The \textbf{rotation number} of $\Lambda$ is 
$$\rot(\Lambda)=\frac{1}{2}(D-U),$$
where $D$ is the number of downward pointing cusps and $U$ is the number of upward pointing cusps.
For certain knot classes, including unknots \cite{eliashberg}, torus knots \cite{torus}, and certain twist knots \cite{twist}, Legendrian knots are entirely classified by these classical invariants. It is common to organize this information into a ``mountain range" like the ones in Figure \ref{fig:mtn}. A mountain range is associated to a given smooth knot. Each dot represents a Legendrian knot with the specified classical invariants.

\begin{figure}[h]
    \centering
    \includegraphics[scale=0.5]{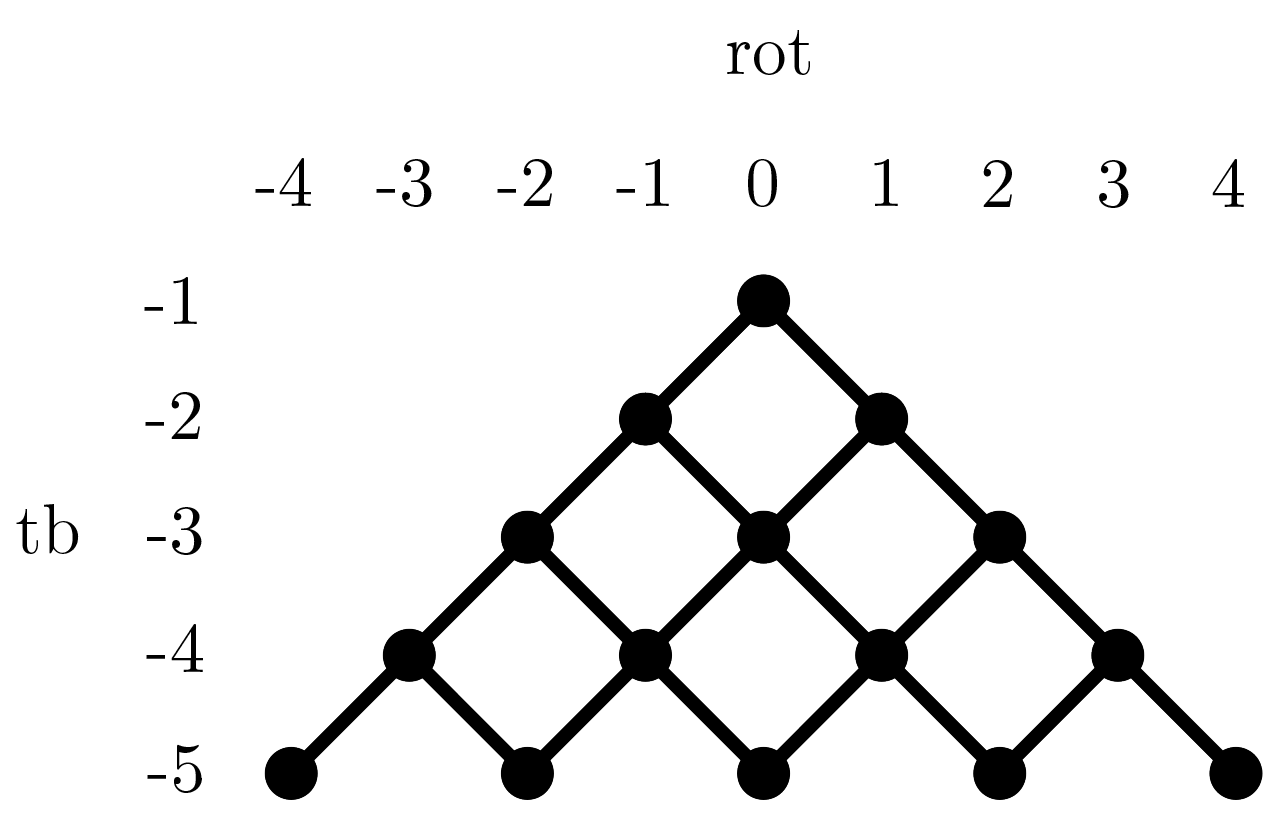} \includegraphics[scale=0.5]{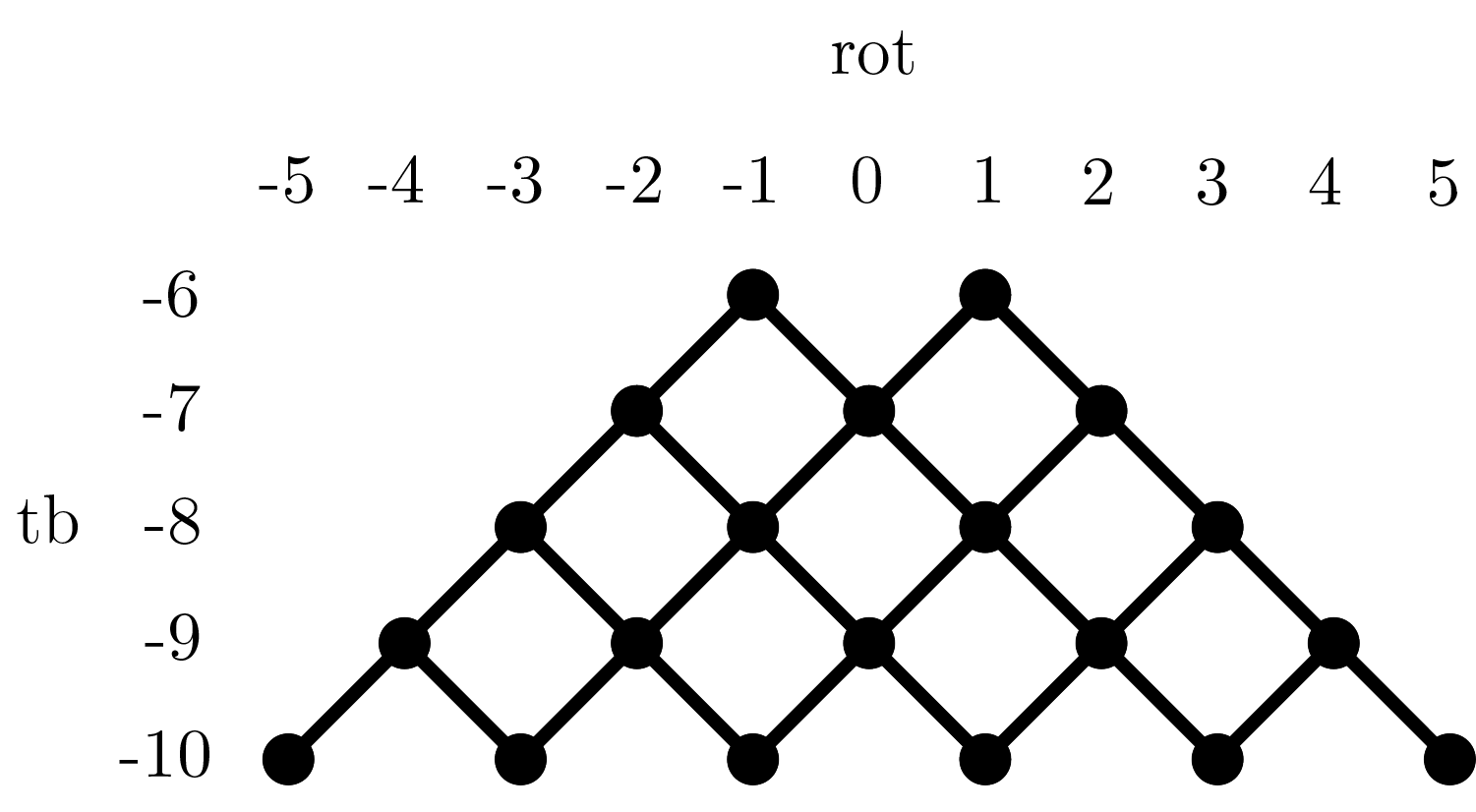}
    \caption{Mountain ranges of the Legendrian unknot (left) and Legendrian negative trefoil (right). 
    }
    \label{fig:mtn}
\end{figure}

Each Legendrian knot represented in a mountain range can be obtained from an element higher up the mountain through a process called \textit{stabilization}, pictured in Figure \ref{fig:Stabilization}. From the front diagram, this can be viewed as adding a pair of cusps. A \textbf{positive} (respectively, \textbf{negative}) \textbf{stabilization} replaces an oriented strand with a pair of downward-oriented (respectively, upward-oriented) cusps.  A Legendrian knot which can be obtained from another via stabilization is called \textit{stabilized}. Note that a positive stabilization increases the rotation number by 1, while a negative stabilization decreases the rotation number by 1. Either stabilization decreases the Thurston-Bennequin invariant by 1. Also note that inserting a ``twist" on an upward (respectively, downward) oriented cusp is equivalent to a positive (respectively, negative) stabilization.

\begin{figure}[h]
    \centering
    \includegraphics[scale=0.5]{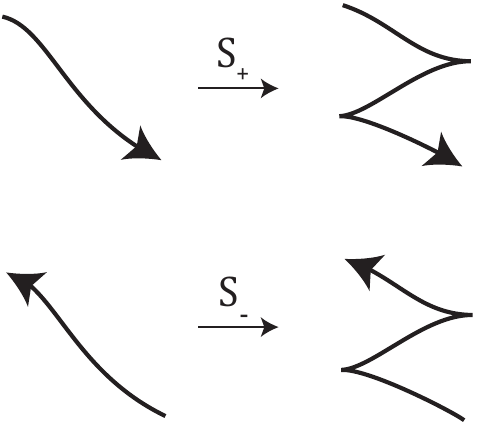}
    \caption{Positive and negative stabilization moves.}
    \label{fig:Stabilization}
\end{figure}

\subsection{Knot Mosaics}
In 2008, Lomonaco and Kauffman \cite{lomonaco} introduced a method of building knots using the mosaic tiles in Figure \ref{fig:OGtiles} as a means of describing quantum knots. An \textbf{$n$-mosaic} is an $n\times n$ array of tiles $T_0,\ldots,T_{10}$, or equivalently, an $n\times n$ matrix whose entries are integers $0,\ldots, 10$. An $n$-mosaic is \textbf{suitably connected} if the connection points of each tile coincide with connection points of contiguous tiles. An $n$-mosaic depicts a link if and only if it is suitably connected.

\begin{figure}[h]
    \centering
    \includegraphics[scale=0.2]{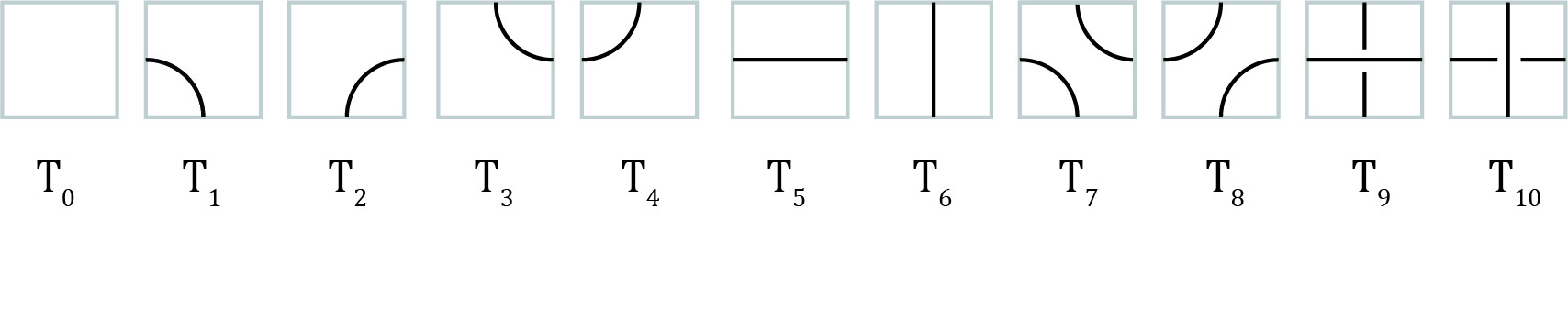}
    \vspace{-30pt}
    \caption{Original mosaic tiles to construct smooth knot mosaics.}
    \label{fig:OGtiles}
\end{figure}

Figure \ref{fig:SmoothMosaics} shows a trefoil knot on a $4$- and $5$-mosaic. Although both  are a valid mosaic representations of the trefoil, the $4$-mosaic (right) appears to utilize the grid space more efficiently. To that end, the \textbf{mosaic number of a knot $K$} is the smallest integer $n$ such that $K$ can be represented on an $n$-mosaic. Indeed, it can be shown that the mosaic number of the trefoil knot is $4$. The \textbf{inner board} of an $n$-mosaic is the $(n-2)$-submosaic consisting of all tiles except the boundary tiles. Since crossing tiles must be placed on the inner board (otherwise the boundary would contain a ``loose end"), a $3$-mosaic of a knot can have at most one crossing.

\begin{figure}[h]
    \centering
    \includegraphics[scale=0.6]{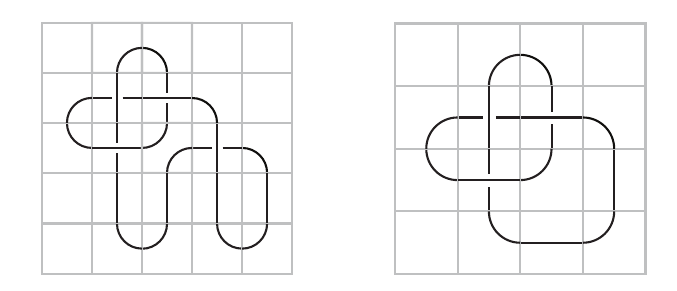}
    \caption{A trefoil knot on a $5$-mosaic (left) and a $4$-mosaic (right).}
    \label{fig:SmoothMosaics}
\end{figure}

\subsection{Legendrian Knot Mosaics}\label{sec:initial_questions}

The remainder of this paper will focus on the intersection of knot mosaics and Legendrian knots. 
In 2022, Pezzimenti and Pandey \cite{pezzimenti} introduced the modified set of mosaic tiles in Figure \ref{fig:Legtiles} to construct front projections of Legendrian knots. Notice that the tile $T_9$ is eliminated since the strand with the more negative slope must be the overstrand. A \textbf{Legendrian $n$-mosaic} is an array of the tiles in Figure \ref{fig:Legtiles} arranged in an $n \times n$ grid that has been rotated 
$45^{\circ}$ counterclockwise. We refer to the $(i,j)$th position of the Legendrian mosaic as the $(i,j)$th position of the original grid.

\begin{figure}[h]
    \centering
    \includegraphics[scale=0.7]{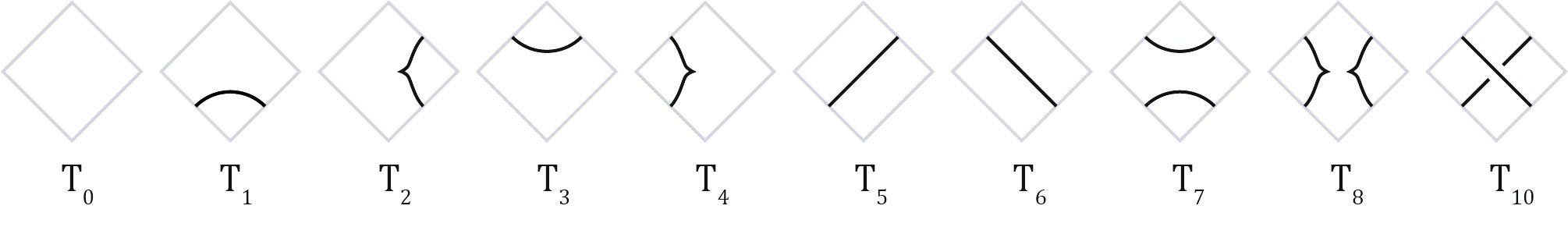}
    \caption{Legendrian mosaic tiles.}
    \label{fig:Legtiles}
\end{figure}

Figure \ref{fig:LegTrefMosaic} displays a Legendrian positive trefoil on a Legendrian $5$-mosaic. 
Although the mosaic number $m(3_1)$ of the smooth trefoil is $4$, it can be shown that if $\Lambda$ represents a Legendrian positive trefoil with maximum Thurston-Bennequin invariant, its mosaic number $m(\Lambda)$  is indeed $5$. Although the inner board contains 4 tiles, the restriction on crossing tiles means that there cannot exist a set of three alternating crossings. 

Given a smooth knot type $K$, we also define the \textbf{Legendrian mosaic number} $m_L(K)$ of $K$ to be the minimum mosaic number taken over all Legendrian representatives of $K$. For example, taking $K$ to be the trefoil $3_1$, there exist positive and negative Legendrian trefoils that can be realized on Legendrian 5-mosaics, but no Legendrian trefoil can be realized on a Legendrian 4-mosaic. Thus, $m_L(3_1)=5$. Note that the Legendrian mosaic number $m_L(K)$ is an invariant of \emph{smooth} knots. On the other hand, we will  consider the mosaic number $m(\Lambda)$  an invariant of \emph{Legendrian} knots.

\begin{figure}[h]
    \centering
    \includegraphics[scale=0.4]{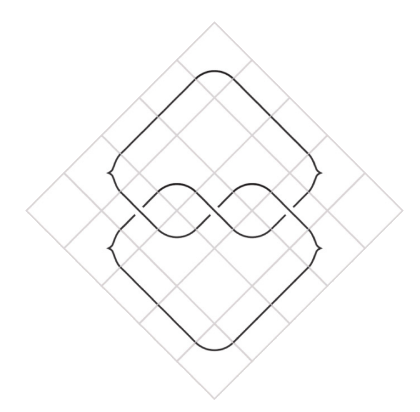}
    \caption{Legendrian positive trefoil on a $5$-mosaic.}
    \label{fig:LegTrefMosaic}
\end{figure}

In \cite{pezzimenti}, Pezzimenti and Pandey pose some open questions. One such question seeks bounds on the mosaic number of Legendrian knots via the classical invariants. In Section \ref{bounds}, we provide some bounds using both combinatorial and linear algebraic approaches. We also provide an infinite family of examples that prove sharpness of one of these bounds.

In Section \ref{unknot-section}, we provide an algorithm to construct mosaics of Legendrian unknots, which gives an upper bound on their mosaic number. This construction builds on the sequence in \cite{pezzimenti} of an infinite family of unknots that realize their mosaic numbers in non-reduced projections (i.e., projections with more than the minimum number of crossings). 

In Section \ref{sec:counting}, we adapt the proof of Theorem 1 in \cite{mosaic-count} to give an algorithm to compute the number of $m\times n$ Legendrian link mosaics for all $m,n\in\Z^+$. In Appendix \ref{app:counting}, we give an implementation of this algorithm in Mathematica and our calculations for all $1\leq n\leq m\leq 11$.

In Section \ref{algos}, we describe a computer algorithm to produce all suitably connected Legendrian mosaics, and use it to compute the mosaic number of all Legendrian knots with mosaic number up to $6$ via an exhaustive search. In Appendix \ref{census-list}, we provide an updated census reflecting these results, which now extends to knots with up to $8$ crossings. (The census in \cite{pezzimenti} previously included Legendrian unknots, and trefoils, up to size $5$.) We highlight several instances of stabilized knots having a smaller mosaic number than their pre-stabilized counterparts, including a family of smooth knots whose Legendrian mosaic number is not realized by any Legendrian representatives with maximal Thurston-Bennequin number. This answers two more open questions from \cite{pezzimenti}.

Finally, in Section \ref{questions}, we suggest some further areas to explore related to our findings.

\section{Lower Bounds from Classical Invariants}
\label{bounds}

\subsection{Oriented Legendrian Mosaic Tiles}
\label{bounds-prelims}
In Appendix B of \cite{lomonaco}, Lomonaco and Kauffman consider all 29 possible combinations of orientations for strands on the 11 classical mosaic tiles in Figure \ref{fig:OGtiles}. In this section, we similarly consider all 25 combinations of orientations for strands on the ten Legendrian mosaic tiles in Figure \ref{fig:Legtiles}. These \textbf{oriented Legendrian mosaic tiles}, hereafter referred to as ``oriented tiles," are tabulated in Figure \ref{fig:oriented_tiles}. In Sections \ref{combo_bounds} and \ref{linear}, we will use these tiles to provide two lower bounds on the mosaic number of a Legendrian knot in terms of its classical invariants, applying both combinatorial and linear algebraic arguments.  

\begin{figure}[h]
    \centering
    \includegraphics[width=1\linewidth]{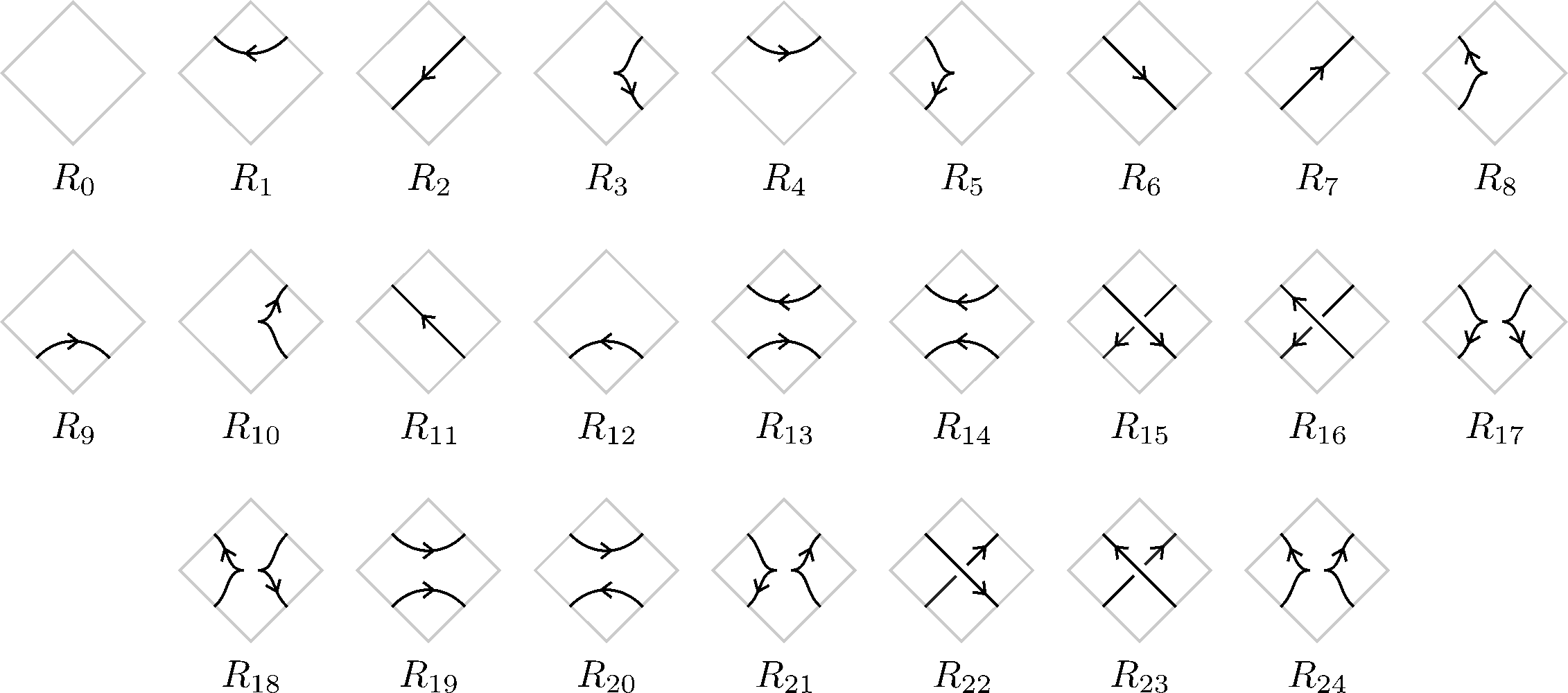}
    \caption{The 25 distinct oriented Legendrian knot mosaic tiles. (Note that the labeling is arbitrary).}
    \label{fig:oriented_tiles}
\end{figure}

We will begin by defining some useful functions based on these oriented tiles.
Given an oriented Legendrian knot mosaic $M$, define $|M|_{R_i}$ as the number of times that the oriented tile $R_i$ appears in $M$. Similarly, define $|M|_{T_i}$ as the number of times that any oriented (or unoriented) version of $T_i$ appears in $M$.

For any oriented tile $R_i$, define $h(R_i)\in\{-2,-1,0,1,2\}$ as the net horizontal movement of strands within $R_i$, 
measured in units of length $\sqrt{2}/2$ times the side length of a tile. Similarly, define $v(R_i)\in\{-2,-1,0,1,2\}$ as the net vertical movement of strands within $R_i$. For example, Figure \ref{fig:hv_examples} shows the values of $h$ and $v$ for tiles $R_3$, $R_6$, and $R_{22}$.
These functions are useful in finding restrictions of the possible valid combinations of tiles that can appear in a Legendrian knot mosaic, as shown in Proposition \ref{prop:net_movement} below.

\begin{figure}[h]
    \centering
    \includegraphics[width=0.6\linewidth]{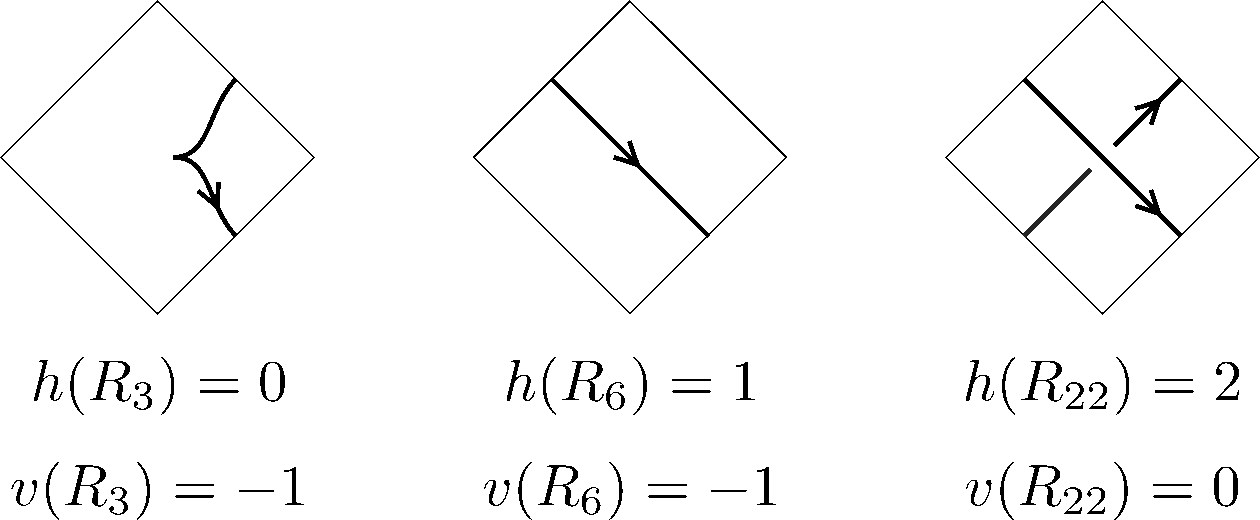}
    \caption{The values of the functions $h$ and $v$ for tiles $R_3$, $R_6$, and $R_{22}$.}
    \label{fig:hv_examples}
\end{figure}

\begin{prop}\label{prop:net_movement}
    Let $M$ be an oriented knot mosaic representing a Legendrian knot $\Lambda$. Then
    $$\sum_{i=0}^{24}|M|_{R_i}h(R_i)=\sum_{i=0}^{24}|M|_{R_i}v(R_i)=0.$$
\end{prop}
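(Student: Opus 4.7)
The plan is to interpret both sides of the identity as statements about the total horizontal (respectively vertical) displacement accumulated when one traverses every strand segment in the mosaic, and then to exploit the fact that the underlying Legendrian knot (or link) is a disjoint union of closed oriented curves in the plane.

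First I would unpack the definition of $h$ and $v$. Each oriented tile $R_i$ contains one or two directed strand arcs, each joining two of the tile's four connection points. For every such arc, the difference in horizontal coordinates between its head and tail is an integer multiple of $\sqrt{2}/2$ times the tile side length, lying in $\{-2,-1,0,1,2\}$ after rescaling. Summing these per-arc horizontal displacements over all arcs in $R_i$ recovers exactly $h(R_i)$, and similarly for $v(R_i)$. This is a routine check across the 25 tiles listed in Figure \ref{fig:oriented_tiles}, with the three illustrative values in Figure \ref{fig:hv_examples} confirming the convention.

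Next, since each oriented tile $R_i$ appears exactly $|M|_{R_i}$ times in $M$, the sum $\sum_{i=0}^{24}|M|_{R_i}h(R_i)$ equals the total horizontal displacement, summed over every oriented strand arc appearing anywhere in $M$. Because $M$ is suitably connected, these arcs glue along shared connection points into closed oriented curves, namely the components of $\Lambda$. Around any closed curve in the plane the horizontal displacements telescope to zero (one returns to the starting $x$-coordinate), and summing over all components gives the desired identity. The argument for $v$ is identical with the roles of the two coordinates exchanged.

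The main obstacle, such as it is, lies in handling the crossing tiles carefully: there $h(R_i)$ and $v(R_i)$ must aggregate the contributions of two disjoint strand arcs at once, and the two arcs may belong to different components of $\Lambda$. Once the per-arc interpretation of $h$ and $v$ is in hand, however, one simply partitions the global sum according to components of $\Lambda$, applies the telescoping observation to each component separately, and adds. No further subtlety arises, so the proof is essentially a bookkeeping verification followed by a one-line closed-curve argument.
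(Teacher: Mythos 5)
Your proposal is correct and matches the paper's argument: the paper likewise observes that tracing the knot through every strand returns to the starting point, so the net horizontal and vertical displacements telescope to zero. Your version merely spells out the per-arc bookkeeping (including the two arcs in a crossing tile and the multi-component case, which the paper does not need since $\Lambda$ is a knot) that the paper leaves implicit.
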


\begin{proof}
When tracing the path of a knot on a mosaic, one passes through all of the strands exactly once, ending where the tracing began. This means that the net horizontal and vertical movement of all the tiles in a knot mosaic must sum to 0.
\end{proof}

We will use the following consequence of this result to prove Theorem \ref{tony_bound} in Section \ref{combo_bounds}.

\begin{lemma} \label{lemma:vertical}
    Let $M$ be an oriented knot mosaic representing a Legendrian knot $\Lambda$. Let $U$ and $D$ denote the numbers of upward- and downward-oriented cusps in $M$, respectively, and let $N$ denote the number of negative crossings. If $U\geq D$, then
    \[
    2|\rot(\Lambda)| \leq 2N + |M|_{T_5} + |M|_{T_6}.
    \]
\end{lemma}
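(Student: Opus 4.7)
The plan is to apply Proposition \ref{prop:net_movement} to the function $v$, which gives $\sum_{i=0}^{24} |M|_{R_i}\,v(R_i) = 0$. For each oriented tile $R_i$, let $c_U(R_i)$ and $c_D(R_i)$ count the up- and down-oriented Legendrian cusps inside $R_i$; by definition $U=\sum_i |M|_{R_i}c_U(R_i)$ and $D=\sum_i |M|_{R_i}c_D(R_i)$. Subtracting the vanishing sum $\sum_i |M|_{R_i}v(R_i)$ from $U-D$ yields the identity
\[
U - D \;=\; \sum_i |M|_{R_i}\bigl(c_U(R_i) - c_D(R_i) - v(R_i)\bigr).
\]

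I would then verify a per-tile inequality by walking through Figure \ref{fig:oriented_tiles} case by case: for every oriented tile $R_i$, the quantity $c_U(R_i) - c_D(R_i) - v(R_i)$ is at most $2$ if $R_i$ is an oriented negative crossing, at most $1$ if $R_i$ is an oriented version of $T_5$ or $T_6$, and at most $0$ otherwise. For oriented cusp tiles, a direct check shows that each upward-oriented cusp contributes $+1$ to both $v$ and $c_U-c_D$, and each downward-oriented cusp contributes $-1$ to both, so the net is $0$. For tiles whose corners are $z$-peaks or $z$-valleys (horizontal-tangent turnarounds in $z$ rather than Legendrian cusps), $c_U=c_D=0$ and the strand returns to its starting height, so again $c_U-c_D-v=0$. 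For oriented straight tiles ($T_5$ or $T_6$), $c_U=c_D=0$ but $v=\pm 1$, giving $c_U-c_D-v=\mp 1\le 1$. For oriented crossings, the Legendrian convention (the strand with more negative slope is the overstrand) combined with the standard sign convention forces positive crossings to have their two strands in opposite vertical directions (so $v=0$), while negative crossings have both strands in the same vertical direction (so $v=\pm 2$); thus the bound is $0$ for positives and $c_U-c_D-v=-v\le 2$ for negatives.

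Summing the per-tile inequality over all oriented tiles gives $U - D \le 2N + |M|_{T_5} + |M|_{T_6}$, and the hypothesis $U\ge D$ rewrites $U-D$ as $2|\rot(\Lambda)|$, which is exactly the desired bound. The main obstacle is the crossing analysis, where one must carefully reconcile the fixed Legendrian overstrand convention with the sign convention for crossings to confirm that $v=\pm 2$ occurs precisely at negative crossings (and $v=0$ at positive ones); once that bookkeeping is in place, the lemma follows at once from Proposition \ref{prop:net_movement}.
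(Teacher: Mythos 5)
Your proof is correct and takes essentially the same route as the paper's: both invoke Proposition \ref{prop:net_movement} to convert the cusp-count difference $U-D$ into the net vertical movement of the non-cusp tiles, and then bound that movement tile by tile, with negative crossings contributing at most $2$ (since $v=\pm 2$ there, while $v=0$ at positive crossings under the Legendrian overstrand convention) and oriented $T_5$/$T_6$ tiles at most $1$. Your per-tile quantity $c_U(R_i)-c_D(R_i)-v(R_i)$ is just a repackaging of the paper's decomposition via the index sets $S$ and $S'$, so no further comparison is needed.
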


\begin{proof}
    First, note that the quantity $D-U=2\rot(\Lambda)$ captures the signed vertical movement through cusp tiles while traversing the oriented knot mosaic. (Also note that $U$ decreases the value of $2\rot(\Lambda)$, while $U$ increases the value of $v$, defined above.)
    Now, consider the following two sets of integers, which are disjoint:
    \[
    S:=\{3,5,8,10,17,18,21,24\},\;S':=\{2,6,7,11,15,23\}.
    \]
    By Figure \ref{fig:oriented_tiles}, an oriented tile $R_i$ contains a cusp if and only if $i\in S$. Therefore, our characterization of the quantity $2\rot(\Lambda)$ may be reformulated as the following equation:
    \[
        2\rot(\Lambda) = -\sum_{i\in S} |M|_{R_i}v(R_i).
    \]
    It follows from Proposition \ref{prop:net_movement} that
    \begin{equation} \label{cusp-movement}
        2\rot(\Lambda) = \sum_{0\leq i \leq 24, i\notin S} |M|_{R_i}v(R_i).
    \end{equation}
    
    It is also evident from Figure \ref{fig:oriented_tiles} that for a given $R_i$ with $i\notin S$ (i.e., $R_i$ containing no cusps), then $i\in\{2,6,15\}$ if and only if $v(R_i)<0$ (i.e., every strand in $R_i$ moves downward and $i\neq 0$). Since $\{2,6,15\}\subset S'$, it follows that
    \begin{equation} \label{no-cusps}
        \sum_{0\leq i\leq 24, i\notin S} |M|_{R_i}v(R_i) \geq \sum_{i\in S'} |M|_{R_i}v(R_i).
    \end{equation}
    Also, note that $i\in S'$ if and only if $R_i$ either contains a negative crossing (in which case $v(R_i)=\pm 2$) or is an oriented $T_5$ or $T_6$ tile (in which case $v(R_i)=\pm 1$). Therefore,
    \begin{equation} \label{eq:unoriented}
        -\sum_{i\in S'} |M|_{R_i}v(R_i) \leq 2N + |M|_{T_5} + |M|_{T_6}.
    \end{equation}

    Now, our assumption that $U\geq D$ implies that $2\rot(\Lambda)\leq 0$, or equivalently, $2|\rot(\Lambda)|=-2\rot(\Lambda)$.
    Combining this equality with (\ref{cusp-movement}) yields
    \begin{align*}
        2|\rot(\Lambda)| &= -\sum_{0\leq i\leq 24, i\notin S} |M|_{R_i}v(R_i) &\\
        & \leq -\sum_{i\in S'} |M|_{R_i}v(R_i) & \text{by }(\ref{no-cusps})\\
        & \leq 2N + |M|_{T_5} + |M|_{T_6}, & \text{by }(\ref{eq:unoriented})
    \end{align*}
    as desired.
\end{proof}

\subsection{A Combinatorial Approach}
\label{combo_bounds}
In this section, we provide two lower bounds for the mosaic number of a Legendrian knot in terms of its classical invariants via combinatorial arguments.
The first lower bound, stated in Theorem \ref{tony_bound} below, is most effective when the value of the quantity 
$$k :=|\rot(\Lambda)|+\tb(\Lambda)$$
is sufficiently high.
The proof relies on manipulating inequalities involving $k$, the number of positive and negative crossings, and the number of upward- and downward-oriented cusps of an oriented Legendrian knot mosaic.

\begin{theorem}\label{tony_bound}
    If $\Lambda$ is a Legendrian knot and $4|\rot(\Lambda)|+\operatorname{tb}(\Lambda)\geq 0$, then \[m(\Lambda) \geq \left\lceil\sqrt{4|\rot(\Lambda)|+\operatorname{tb}(\Lambda)}\right\rceil.\]
\end{theorem}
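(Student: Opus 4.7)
The plan is to combine Lemma \ref{lemma:vertical} with elementary tile-counting bounds on an $n$-mosaic to show directly that $m(\Lambda)^2 \geq 4|\rot(\Lambda)| + \tb(\Lambda)$, which, together with the hypothesis $4|\rot(\Lambda)| + \tb(\Lambda)\geq 0$, is equivalent to the stated conclusion after taking a square root and applying the ceiling function.

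First, I would set $n = m(\Lambda)$ and fix an oriented Legendrian $n$-mosaic $M$ representing $\Lambda$. By replacing $\Lambda$ with an orientation-reversed or mirrored representative if necessary (which preserves $\tb(\Lambda)$ and $|\rot(\Lambda)|$), I would assume without loss of generality that $U \geq D$, so that $|\rot(\Lambda)| = (U - D)/2$. Substituting the definitions of $\tb$ and $\rot$ in terms of $P, N, U, D$ then yields the clean expansion
\[
4|\rot(\Lambda)| + \tb(\Lambda) = 2(U - D) + P - N - \tfrac{1}{2}(U + D) = P - N + \tfrac{1}{2}(3U - 5D).
\]

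Second, I would apply Lemma \ref{lemma:vertical} rewritten as $U \leq D + 2N + |M|_{T_5} + |M|_{T_6}$. Since the coefficient of $U$ in the expansion above is positive, substituting this upper bound into the $(3/2)U$ term and simplifying gives
\[
4|\rot(\Lambda)| + \tb(\Lambda) \leq P + 2N - D + \tfrac{3}{2}\bigl(|M|_{T_5} + |M|_{T_6}\bigr).
\]
At this point $|\rot|$ has been eliminated, and the right-hand side depends only on the counts of certain tile types that appear in $M$.

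Finally, I would close the inequality by bounding the right-hand side by $n^2$. The counts $P$, $N$, $D$, $|M|_{T_5}$, and $|M|_{T_6}$ record tiles drawn from pairwise disjoint tile populations of the $n^2$ tiles of $M$, and positive and negative crossing tiles are further constrained to lie in the $(n-2) \times (n-2)$ inner board. I expect this last step to be the main obstacle: the naive bound $|M|_{T_0} + \cdots + |M|_{T_{10}} = n^2$ is not directly strong enough because several of the tile counts on the right-hand side carry coefficients greater than one. Closing this gap will likely require combining the inner-board restriction $P + N \leq (n-2)^2$ with a charge-spreading argument that distributes the excess weight on crossings and on $T_5, T_6$ corners across nearby blank, straight, or cusp tiles, thereby reducing the per-tile contribution to at most $1$ and yielding a total of at most $n^2$.
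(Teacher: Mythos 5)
Your first two steps are correct: the choice of orientation with $U\geq D$ is legitimate, the expansion $4|\rot(\Lambda)|+\tb(\Lambda)=P-N+\tfrac12(3U-5D)$ is right, and substituting Lemma \ref{lemma:vertical} in the form $U\leq D+2N+|M|_{T_5}+|M|_{T_6}$ correctly yields $4|\rot(\Lambda)|+\tb(\Lambda)\leq P+2N-D+\tfrac32\bigl(|M|_{T_5}+|M|_{T_6}\bigr)$. The gap is the closing inequality $P+2N-D+\tfrac32\bigl(|M|_{T_5}+|M|_{T_6}\bigr)\leq n^2$, which you flag as an obstacle but do not prove --- and which is in fact false as a statement about Legendrian knot mosaics, so no charge-spreading argument can rescue it. A serpentine unknot mosaic (a single loop snaking through essentially every tile of an $n$-mosaic, which uses only arc and line tiles and is suitably connected for all even $n$) has $P=N=0$, roughly $n^2-2n$ line tiles $T_5,T_6$, and only $O(n)$ cusps, so the left side is about $\tfrac32 n^2-O(n)$, exceeding $n^2$ for large $n$. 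The root cause is that substituting the lemma into the $\tfrac32 U$ term inflates the coefficients on $N$ and on $|M|_{T_5},|M|_{T_6}$ above $1$, so tile-disjointness can only give a bound near $\tfrac32 n^2$, and your proposed repairs (inner-board restriction, local charge redistribution) are purely per-tile counting arguments that the serpentine example already defeats; they make no use of the hypothesis $4|\rot(\Lambda)|+\tb(\Lambda)\geq 0$, which is the only thing that distinguishes the knots at issue from such counterexamples.

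The paper avoids this by running Lemma \ref{lemma:vertical} in the opposite direction: rather than using it to eliminate $U$ from above, it uses it as a \emph{lower} bound forcing the mosaic to contain many crossing and line tiles, namely $2N+|M|_{T_5}+|M|_{T_6}\geq 2|\rot(\Lambda)|$. This is combined with two further lower bounds with unit coefficients: $P\geq k+N$ where $k=|\rot(\Lambda)|+\tb(\Lambda)$ (derived from the $\tb$ identity together with $U\geq 2|\rot(\Lambda)|$), and $c\geq \tfrac12 U\geq |\rot(\Lambda)|$ for the number $c$ of cusp tiles. Since cusp tiles are disjoint from crossing and line tiles, summing gives $n^2\geq c+P+N+|M|_{T_5}+|M|_{T_6}\geq |\rot(\Lambda)|+k+2|\rot(\Lambda)|=4|\rot(\Lambda)|+\tb(\Lambda)$, with every tile counted at most once. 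If you want to salvage your strategy, you should keep $U$ in play as a resource (it forces cusp tiles \emph{and}, via the lemma, crossing/line tiles) rather than substituting it away.
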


\begin{proof}
    Suppose a Legendrian knot $\Lambda$ can be represented by an  $n$-mosaic $M$. 
    We will show that $n\geq \sqrt{3|\rot(\Lambda)|+k}$. 
    For the particular front projection of $\Lambda$ represented in $M$, let $U$ and $D$ denote the numbers of upward- and downward-oriented cusps, respectively, and let $P$ and $N$ denote the numbers of positive and negative crossings, respectively. 

    Since $\rot(\Lambda)=\frac{1}{2}(D-U)$, orientation only affects the sign of $\rot(\Lambda)$. Without loss of generality, choose an orientation such that $U\geq D$. Since $D\geq 0$, this inequality implies $U\geq |D-U| = 2|\rot(\Lambda)|$. Then, by definition,
    \begin{align*}
        \operatorname{tb}(\Lambda) &= P-N -\frac{1}{2}(D+U) \\
        & = P-N - \frac{1}{2}(D-U)-U \\
        & = P-N-\rot(\Lambda)-U\\
        & \leq P-N + |\rot(\Lambda)| -2|\rot(\Lambda)|\\
        &= P-N -|\rot(\Lambda)|.
    \end{align*}
    Substituting $\operatorname{tb}(\Lambda)=k-|\rot(\Lambda)|$ yields $k\leq P-N$, or,
    \begin{equation} \label{p-bound}
        P \geq k+N.
    \end{equation}

    Now, let $c:= \sum_{i\in\{2,4,8\}} |M|_{T_i}$ and observe that $c$ is the number of tiles in $M$ that contain cusps.
    Our earlier remark that $U\geq 2|\rot(\Lambda)|$ implies
    \[
    c \geq \frac{1}{2}U \geq |\rot(\Lambda)|.
    \]
    We can also consider the number of tiles in $M$ that do not contain cusps:
    \begin{align*}
        n^2 - c &\geq P+N + |M|_{T_5} + |M|_{T_6}\\
        & \geq k+2N + |M|_{T_5} + |M|_{T_6} & \text{by (\ref{p-bound})}\\
        &\geq k+2|\rot(\Lambda)|. & \text{by Lemma \ref{lemma:vertical}}
    \end{align*}
    Hence,
    \[
    n^2 \geq |\rot(\Lambda)| + k+2|\rot(\Lambda)|.
    \]
    Solving for $n$ completes the proof.
\end{proof}

Since the bound in Theorem \ref{tony_bound} increases as $k=|\rot(\Lambda)|+\tb(\Lambda)$ increases, it is most effective for Legendrian knots on the ``outermost boundary" of the mountain range of its smooth knot type. For example, consider the mountain range of Legendrian unknots (Figure \ref{fig:mtn}, left). For every Legendrian unknot $\Lambda$ on the outermost boundary of the mountain range, we have $k=-1$. Thus, Theorem \ref{tony_bound} tells us, for example, that any such unknot with $|\rot(\Lambda)|\geq 4$, $6$, or $9$ satisfies
\[m(\Lambda) \geq \left\lceil \sqrt{3|\rot(\Lambda)|+k} \right\rceil = \left\lceil \sqrt{3|\rot(\Lambda)|-1} \right\rceil = 4, 5, \text{or } 6,\] 
respectively. Our computational results in Figure \ref{fig:new-unknots} suggest that these bounds are not sharp. 
Nevertheless, the linear algebraic argument used to prove Theorem \ref{thm:lin-alg} in Section \ref{linear} replicates the bound in Theorem \ref{tony_bound}. This suggests that this bound cannot be significantly improved without considering the geometric properties of Legendrian knot mosaics.

For all Legendrian unknots not on the outermost boundary of the mountain range, we have $k<-1$, making the bound in Theorem \ref{tony_bound} less useful.
This issue motivates the bound in Theorem \ref{sqrtbound} below. This bound is usually more powerful than the one in Theorem \ref{tony_bound}, and it is based solely on $\tb(\Lambda)$.
The proof takes an extremal approach by considering every possible way the tiles in the boundary and the inner board can contribute to the Thurston-Bennequin invariant of a Legendrian knot. We also employ the observation in Propositions 3.5 and 3.6 of \cite{pezzimenti} that a mosaic diagram of a Legendrian link containing $b$ tiles in its outer boundary can have at most $b/2$ cusps in its outer boundary.

\begin{theorem}\label{sqrtbound}
    If $\Lambda$ is a Legendrian knot with negative Thurston-Bennequin number, then
    \[
    m(\Lambda)\geq \left\lceil \sqrt{-\operatorname{tb}(\Lambda)-\frac{3}{4}} + \frac{3}{2} \right\rceil.
    \]
\end{theorem}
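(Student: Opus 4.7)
The plan is to upper-bound $-\tb(\Lambda)$ by a quadratic in $n := m(\Lambda)$ and then invert. Suppose $\Lambda$ is realized on an $n$-mosaic $M$, and write $\tb(\Lambda) = P - N - C/2$, where $P$, $N$, $C$ count positive crossings, negative crossings, and cusps in $M$. Split the cusps as $C = c_o + c_i$, where $c_o$ counts cusps on the outer boundary of $M$ and $c_i$ counts cusps strictly on the inner board.

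Two structural constraints drive the argument. First, since every crossing tile would leave a loose end on the outer boundary, all crossings and all inner cusps occupy inner-board tiles, giving the tile-count inequality $P + N + c_i \leq (n-2)^2$. Second, the observation from Propositions 3.5 and 3.6 of \cite{pezzimenti} highlighted in the excerpt tells us that at most half of the $4(n-1)$ outer-boundary tiles can be cusps, so $c_o \leq 2(n-1)$. Dropping the $-P$ term (nonpositive) in $-\tb(\Lambda) = N - P + (c_o + c_i)/2$ and applying the boundary bound yields $-\tb(\Lambda) \leq N + (n-1) + c_i/2$. Using $N \leq (n-2)^2 - c_i$, the quantity $N + c_i/2$ is at most $(n-2)^2 - c_i/2 \leq (n-2)^2$, so
\[
-\tb(\Lambda) \leq (n-2)^2 + (n-1) = n^2 - 3n + 3.
\]

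Rearranging gives $n^2 - 3n + (3 + \tb(\Lambda)) \geq 0$. Because $\tb(\Lambda)$ is a negative integer, the discriminant $-3 - 4\tb(\Lambda) \geq 1$ is positive, and the roots are $n_\pm = \tfrac{3 \pm \sqrt{-3 - 4\tb(\Lambda)}}{2}$. Since $n_- \leq 1$ for every $\tb(\Lambda) \leq -1$ and any nontrivial Legendrian mosaic has $n \geq 2$, the alternative $n \leq n_-$ is ruled out, forcing
\[
n \geq n_+ = \frac{3}{2} + \sqrt{-\tb(\Lambda) - \tfrac{3}{4}}.
\]
Taking the ceiling yields the claim.

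I do not expect any serious obstacle: the only geometric input beyond definitions is the Pezzimenti–Pandey boundary-cusp inequality, after which the argument is a short extremal count plus a quadratic inversion. The main care needed is in bookkeeping—splitting the cusp count correctly into inner and outer contributions, and confirming that the smaller root $n_-$ does not interfere with the integer constraint on $n$.
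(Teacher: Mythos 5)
Your overall strategy is the same as the paper's: bound the boundary contribution via the Pezzimenti--Pandey cusp inequality (at most $2n-2$ cusps among the $4n-4$ boundary tiles, contributing at least $-(n-1)$ to $\tb(\Lambda)$), bound the inner board by an extremal count over its $(n-2)^2$ tiles, and invert the resulting quadratic $-\tb(\Lambda)\le n^2-3n+3$; your handling of the two roots is just a more explicit version of the paper's ``solving for $n$.'' However, one intermediate step fails as written: the tile-count inequality $P+N+c_i\le (n-2)^2$ is false in general, because $c_i$ counts \emph{cusps} while the right side counts \emph{tiles}, and in the Legendrian tile set a single tile can carry two cusps --- namely $T_8$. (This is not a marginal case: the crab-bucket mosaics of Section 2.2 fill the entire inner board with $T_{10}$ and $T_8$ tiles, so $c_i$ alone can be close to $2(n-2)^2$.) Consequently your derivation $N+c_i/2\le (n-2)^2-c_i/2$ rests on a false premise, even though the weaker conclusion you actually use, $N+c_i/2\le (n-2)^2$, happens to be true.

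The repair is exactly the paper's bookkeeping: argue tile-by-tile on the inner board instead of with aggregate cusp counts. Each inner tile contributes at least $-1$ to $\tb(\Lambda)$: a crossing tile contributes $\pm 1$, a double-cusp tile contributes $2\cdot(-\tfrac{1}{2})=-1$, a single-cusp tile contributes $-\tfrac{1}{2}$, and any other tile contributes $0$. Hence the inner board contributes at least $-(n-2)^2$, which combined with your boundary estimate gives $\tb(\Lambda)\ge -(n-2)^2-(n-1)$ directly, and the rest of your argument goes through unchanged. So the proposal is the same approach as the paper with a repairable bookkeeping error, located precisely where you flagged that care was needed in splitting the cusp count.
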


\begin{proof}
    Consider a Legendrian $n$-mosaic of $\Lambda$.
    Since none of the $4n-4$ boundary tiles can contain crossings, the only way boundary tiles can contribute to $\tb(\Lambda)$ is by having cusps. Since these tiles contain at most $(4n-4)/2 = 2n-2$ cusps, they collectively contribute $-(n-1)$ or more to $\tb(\Lambda)$.
    
    Now, consider the $(n-2)^2$ tiles in the inner board.
    Each may contain either a crossing (contributing $-1$ or $1$ to $\tb(\Lambda)$), a cusp (contributing $-1/2$ to $\tb(\Lambda)$), or neither (contributing $0$ to $\tb(\Lambda)$).
    In other words, each tile in the inner board contributes $-1$ or more to $\tb(\Lambda)$, so they collectively contribute $-(n-2)^2$ or more to $\tb(\Lambda)$.
    Altogether,
    \[
    \operatorname{tb}(\Lambda) \geq -(n-2)^2 -(n-1),
    \]
    and thus,
    \[
    -\operatorname{tb}(\Lambda) \leq n^2-3n+3 =\left(n-\frac{3}{2}\right)^2 + \frac{3}{4}.
    \]
   Solving for $n$ achieves the desired inequality.
\end{proof}

\begin{figure}[H]
    \centering
    \includegraphics[scale=0.1]{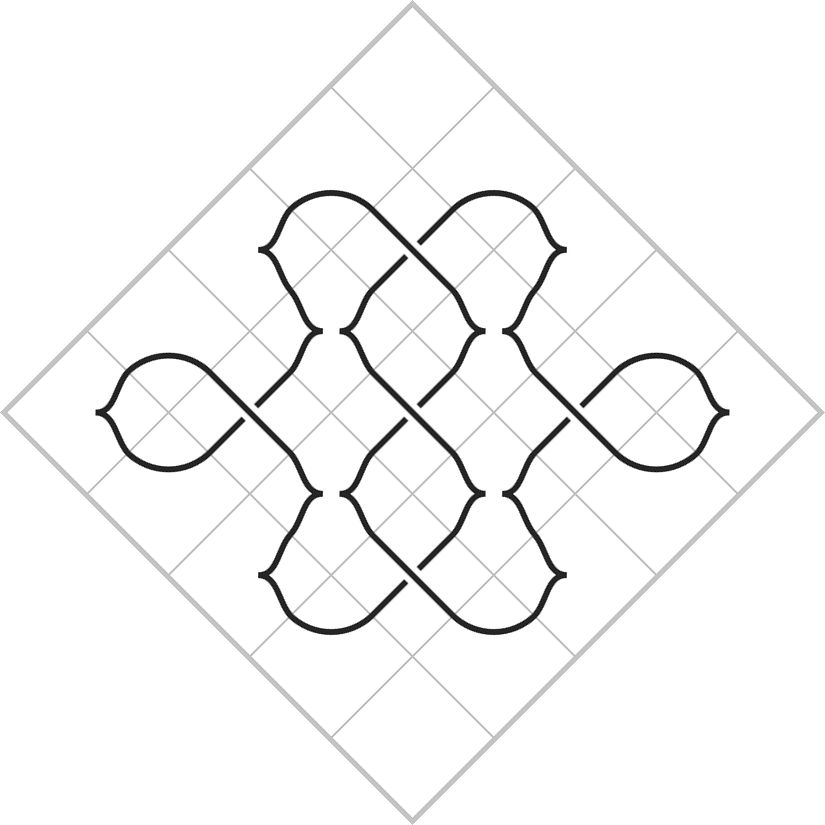}
    \includegraphics[scale=0.1]{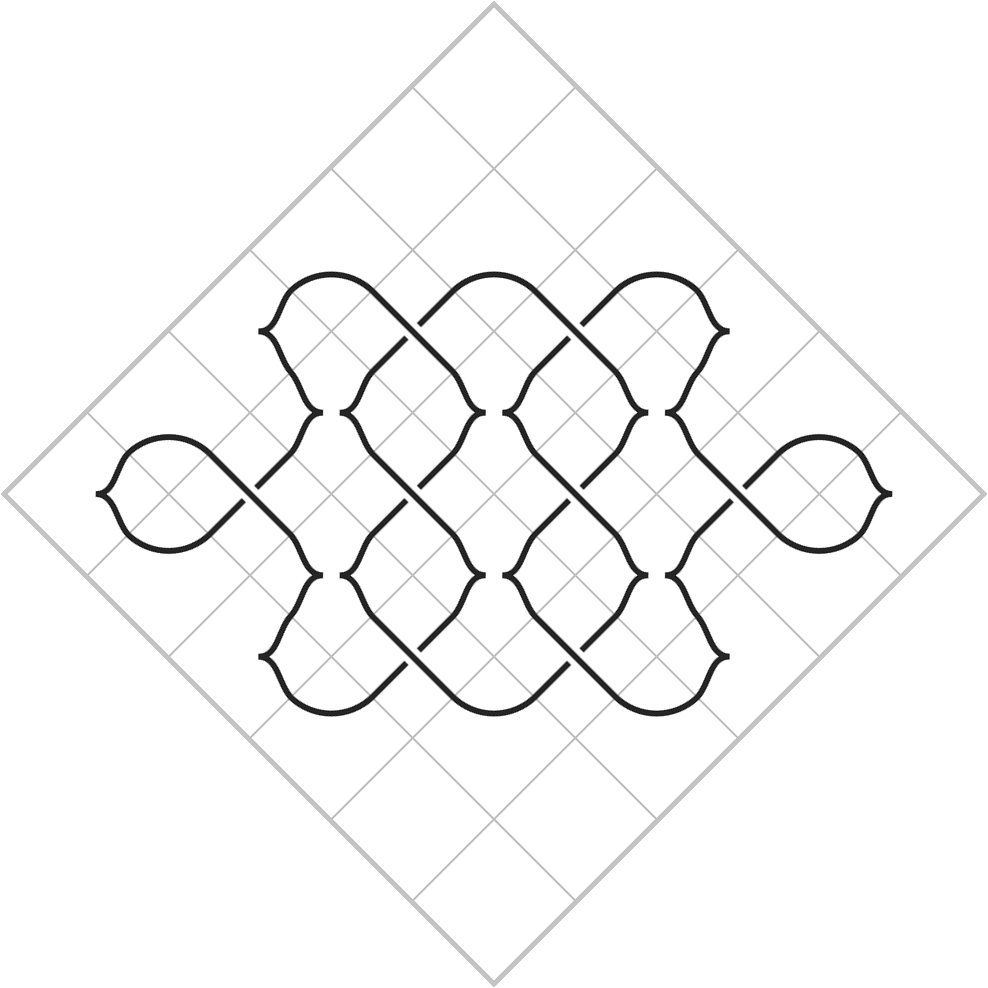}
    \includegraphics[scale=0.1]{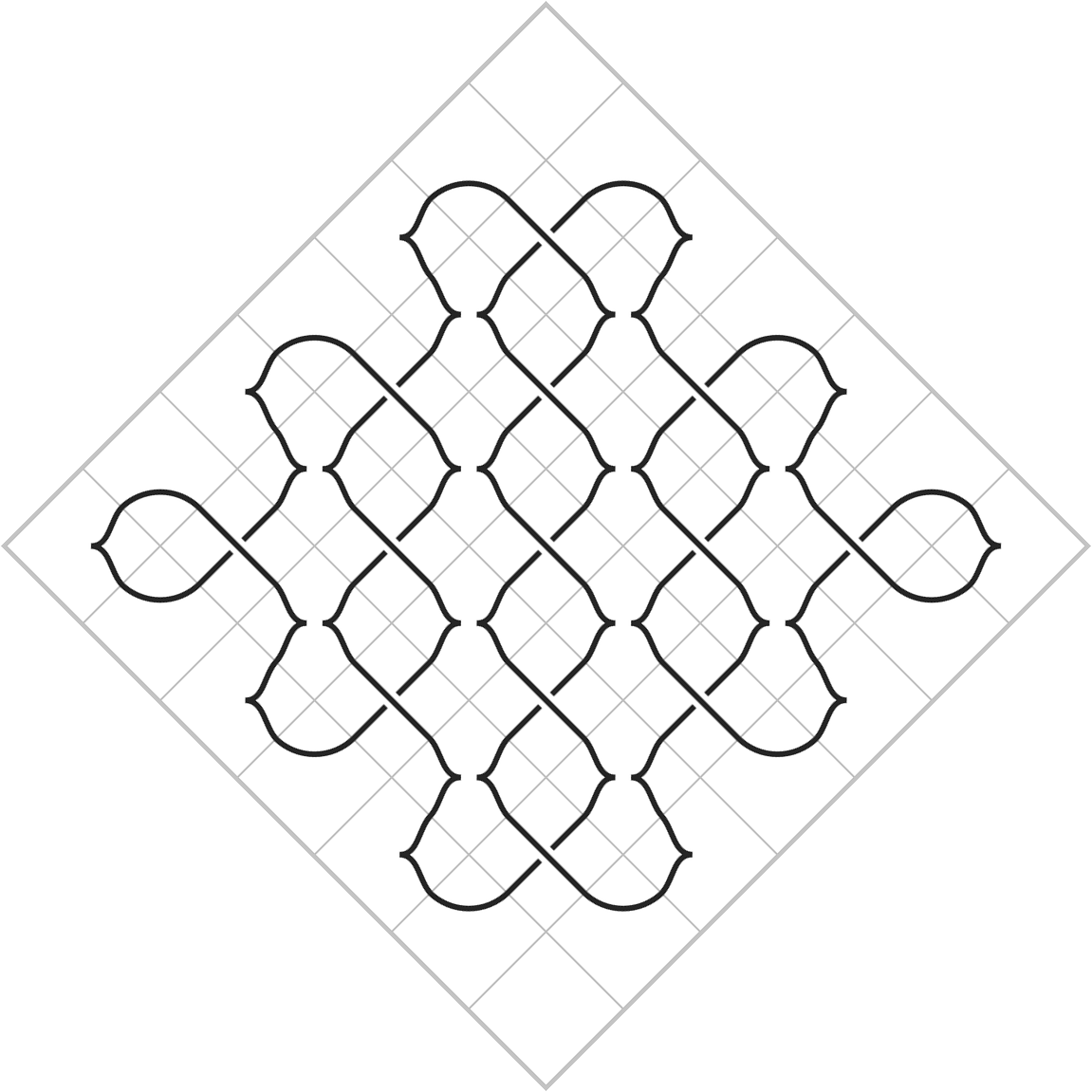}
    \includegraphics[scale=0.1]{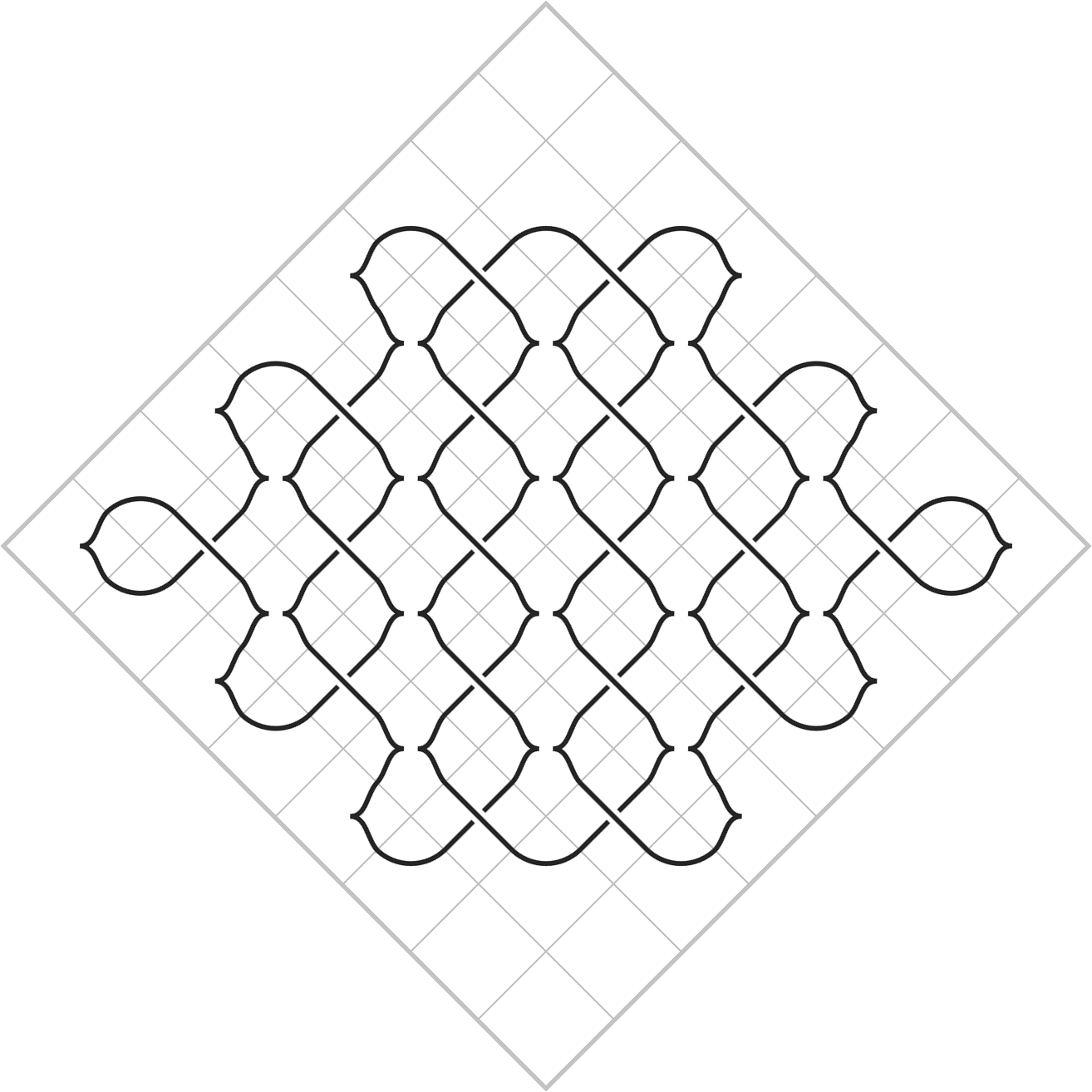}
    \includegraphics[scale=0.1]{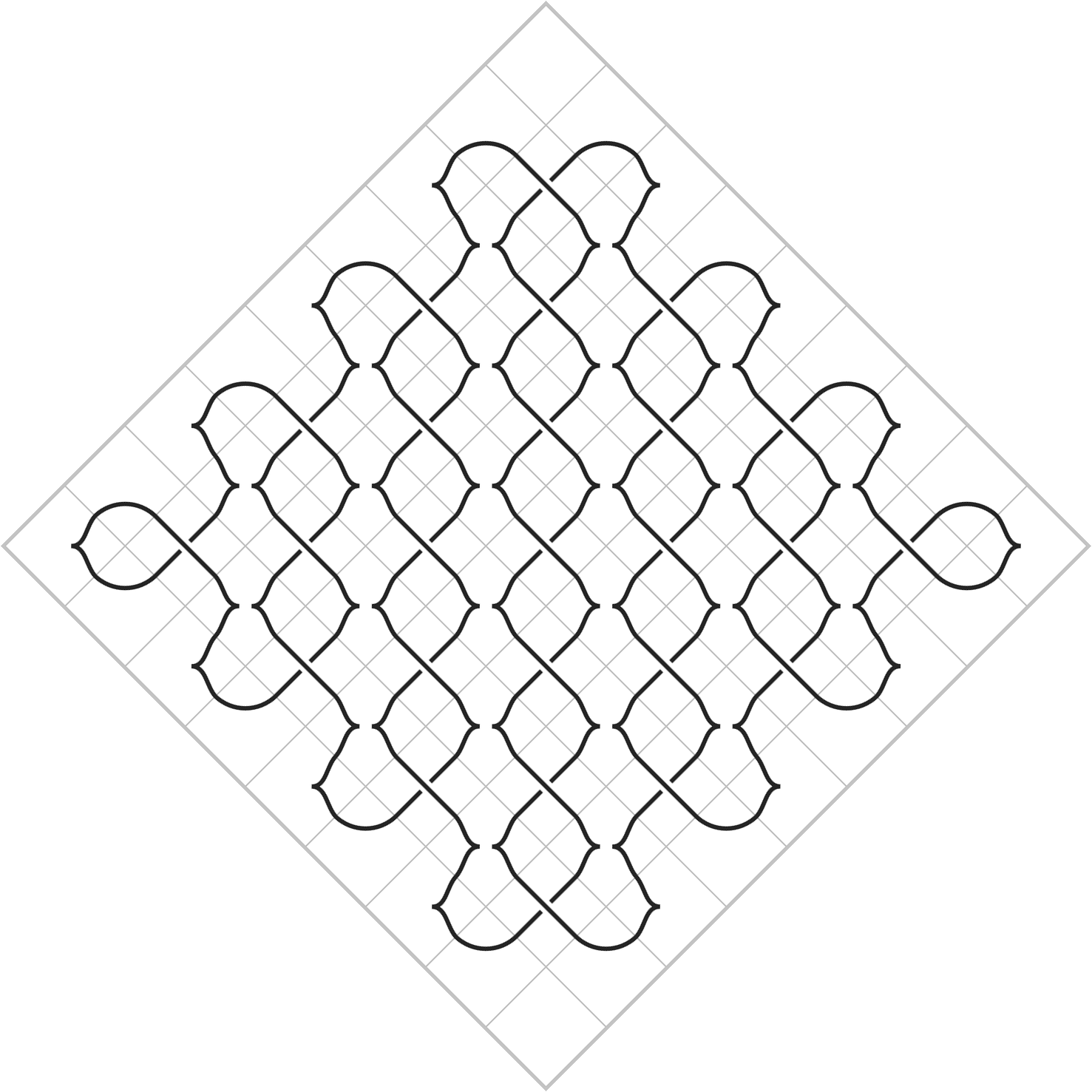}
    \caption{Crab buckets $\beta_5,\beta_6,\dots,\beta_{9}$.}
    \label{fig:crabs}
\end{figure}

The bound in Theorem \ref{sqrtbound} is sharp in infinitely many cases; in fact, it is attained by Legendrian representatives of infinitely many distinct smooth knot types. To show this, we construct an infinite sequence of Legendrian knots $\beta_n$, for $n\geq5$ that each attain this bound. We call this sequence the sequence of \emph{crab buckets} and refer to each $\beta_n$ as the $n$th \emph{crab bucket}. (See Section \ref{questions} for an explanation of the inspiration for this name.)
Figure \ref{fig:crabs} depicts the first five crab buckets $\beta_5,\dots,\beta_9$. For all $n\geq 5$, the $n$th crab bucket $\beta_n$ can be constructed on a Legendrian $n$-mosaic as follows:
\begin{enumerate}
    \item Starting with the tile in position $(2,2)$, add as many nonadjacent $T_{10}$ tiles into the inner board as possible.
    \item If $n$ is even, then add $T_1$ to the tile in position $(2,n-1)$ and $T_3$ to the tile in position $(n-1,2)$.
    \item Fill all remaining empty tiles in the inner board with $T_8$.
    \item Add $T_1$, $T_2$, $T_3$, and $T_4$ tiles to the outer boundary of the mosaic to form a knot. (Note that there is only one way to do this due to the restricted movement of the strands connecting to the $T_{10}$ tile in position $(2,2)$, cf.\ the \emph{twofold rule} stated in Section 2 of \cite{mosaic-count}.)
\end{enumerate}
The steps of this construction in the odd and even cases are illustrated in Figures \ref{fig:crabstruction7} and \ref{fig:crabstruction8}, respectively.

Recall that the \textit{Legendrian connected sum} operation, denoted by $\#$, is a way of gluing Legendrian knots that preserves their orientations and tangency to the standard contact structure, creating a composite Legendrian knot.
(For a formal treatment of the Legendrian connected sum operation, we refer the reader to \cite{connect-sum}.)
Topologically, each crab bucket is the Legendrian connected sum of Legendrian $(2,q)$-torus knots, as specified in Observation \ref{cb-obs} below.

\begin{figure}
    \centering
    \includegraphics[width=0.9\linewidth]{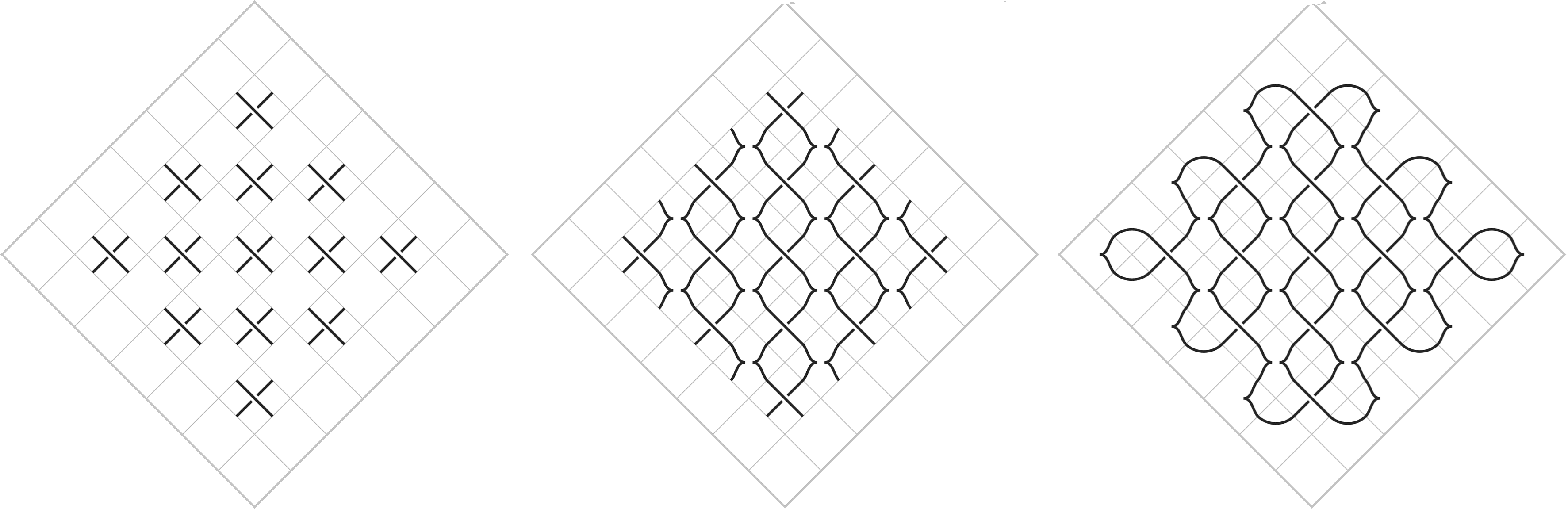}
    \caption{The construction of $\beta_7$.}
    \label{fig:crabstruction7}
\end{figure}

\begin{figure}
    \centering
    \includegraphics[width=0.9\linewidth]{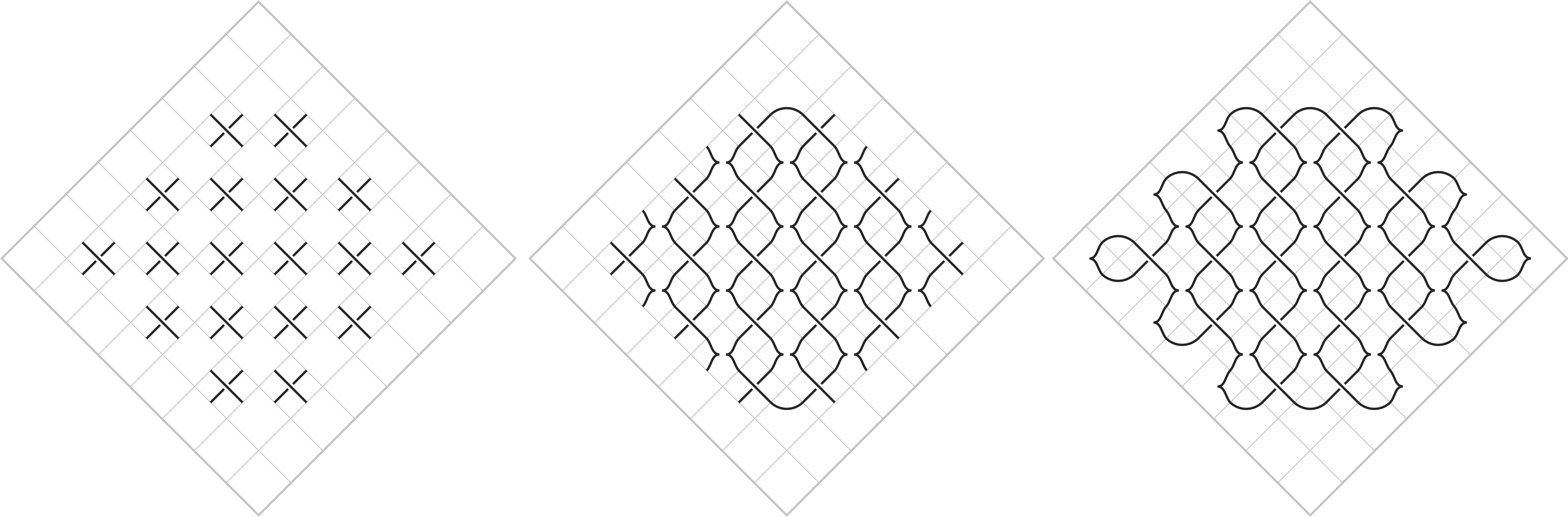}
    \caption{The construction of $\beta_8$.}
    \label{fig:crabstruction8}
\end{figure}

\begin{obs} \label{cb-obs}
    The crab bucket $\beta_5$ is the Legendrian $(2,3)$-torus knot (i.e., the Legendrian negative trefoil). 
        For all even $n\geq 6$, $\beta_n$ is the Legendrian connected sum of Legendrian torus knots
    \[
    (2,3) \# (2,5) \# \dots \# (2,n-3) \# (2, n-3) \# (2,n-5) \# \dots \# (2,3).
    \]
    For all odd $n\geq 7$, the crab bucket $\beta_{n}$ is the Legendrian connected sum of Legendrian torus knots
    \[
    (2,3) \# (2,5) \# \dots \# (2,n-2) \# (2,n-4) \# \dots \# (2,3).
    \]
\end{obs}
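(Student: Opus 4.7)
The plan is to establish the observation by induction on $n$, with the base case $n=5$ verified directly and the inductive step realized via explicit separating disks in the mosaic.

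For the base case, I would trace the two strands of $\beta_5$ through its inner $2\times 2$ block of $T_{10}$ crossings, the surrounding $T_8$ fillers, and the boundary cusp tiles, to read off a front projection that is manifestly a standard Legendrian negative trefoil. As a sanity check, I would compute $\tb(\beta_5)$ and $\rot(\beta_5)$ from the diagram (using the formulas in Section~\ref{sec:basics}) and invoke the classification of Legendrian torus knots \cite{torus} to confirm the identification.

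For $n\geq 6$, I would locate in the rotated picture of $\beta_n$ a pair of separating disks whose boundary circles each meet the knot transversely in exactly two points, isolating the outermost diagonal strip of $T_{10}$ crossings in the inner board. In the odd case this strip consists of a consecutive run of $n-2$ crossings capped by two cusps at each end, making it a standard Legendrian $(2,n-2)$-torus knot; the remainder of $\beta_n$, up to Legendrian isotopy, is a crab bucket of size $n-2$. In the even case the additional $T_1$ and $T_3$ tiles at positions $(2,n-1)$ and $(n-1,2)$ cleave the outermost strip into two runs of $n-3$ crossings each, yielding two copies of $(2,n-3)$ attached by two bridges to a size $n-2$ crab bucket. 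Iterating the separation peels off one factor (odd $n$) or two factors (even $n$) per step until only a $\beta_5$ core remains, producing the claimed connected-sum decomposition.

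The main obstacle, I expect, is confirming that each peeled-off strip is Legendrian-isotopic to the specific maximum-$\tb$ $(2,q)$-torus knot and not to some stabilization of it, since the strip's crossings and cusps appear in a mosaic-imposed layout rather than a standard braid form. To handle this I would use the additivity formulas $\tb(K_1\#K_2)=\tb(K_1)+\tb(K_2)+1$ and $\rot(K_1\#K_2)=\rot(K_1)+\rot(K_2)$ \cite{connect-sum}: a direct cusp-and-crossing count on the mosaic recovers $\tb(\beta_n)$ and $\rot(\beta_n)$, and matching these against the additive totals predicted by the claimed factorization (combined with the classification of Legendrian $(2,q)$-torus knots in \cite{torus}) pins down each factor up to Legendrian isotopy and rules out alternative stabilized representatives.
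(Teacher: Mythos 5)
The paper never proves this statement: it is recorded as an \emph{Observation}, read directly off the construction (Figures \ref{fig:crabs}--\ref{fig:crabstruction8}), and its only use downstream is the corollary that no two crab buckets share a smooth knot type. So your proposal must stand on its own, and its odd-case inductive step has a genuine error. By unique prime factorization of connected sums, peeling a single $(2,n-2)$ factor and declaring the remainder to be $\beta_{n-2}$ is inconsistent with the statement you are proving: for $n=7$ your recursion yields $\beta_7\cong(2,5)\,\#\,\beta_5=(2,5)\,\#\,(2,3)$, whereas the observation asserts $(2,3)\,\#\,(2,5)\,\#\,(2,3)$ --- you lose one factor at every odd step (torus-crossing total $8$ versus $11$). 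The underlying geometry explains why: the $T_{10}$ tiles of $\beta_n$ (odd) sit on anti-diagonals of the checkerboard in runs of lengths $1,3,5,\dots,n-2,\dots,5,3,1$, with the two length-$1$ runs being mere kinks and each run of length $q\geq 3$ a $(2,q)$ twist region. The $(n-2)$-crossing run is thus the \emph{central} strip, not the outermost one, and excising it leaves the factor list $\{3,5,\dots,n-4,n-4,\dots,3\}$ --- the even pattern of $\beta_{n-1}$, not $\beta_{n-2}$. To recover $\beta_{n-2}$ you must peel two regions per odd step (the $(n-2)$-run together with an adjacent $(n-4)$-run), or else induct through both parities removing one factor per unit decrease of $n$. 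Your even-case factor count is correct (and the remainder $\beta_{n-2}$ is right there), although the $T_1/T_3$ tiles at $(2,n-1)$ and $(n-1,2)$ do not ``cleave'' one strip in two: the two $(n-3)$-runs occupy distinct anti-diagonals regardless.

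Your closing strategy is also flawed. You propose to certify each factor as the \emph{maximum-$\tb$} Legendrian $(2,q)$-torus knot by matching $\tb$ and $\rot$ against the additive totals; but the paper computes $\tb(\beta_n)=-(n-1)(n-2)$ for odd $n$, so $\tb(\beta_5)=-12$ while the maximal-$\tb$ Legendrian negative trefoil has $\tb=-6$ (see Figure \ref{fig:trefoils}). The mosaic's length-$1$ crossing runs and surplus $T_8$ cusp pairs are stabilizations, so the factors are \emph{necessarily} stabilized, your base-case ``sanity check'' would refute rather than confirm the identification you intend, and the inductive matching would produce a contradiction instead of pinning down representatives. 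The observation is meant topologically --- the paper prefaces it with ``Topologically'' and concludes only that the $\beta_n$ have pairwise distinct smooth types --- and for that reading your separating-disk idea is the right mechanism once the peeling combinatorics above is repaired; the Legendrian rigidity you aim for is both unnecessary for the statement and false for these mosaics.
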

In particular, no two crab buckets share the same smooth knot type.

\begin{theorem} \label{crab-mosaic-number}
    Every member of the crab bucket family $\beta_{5},\beta_{6},\dots$ attains the bound in Theorem \ref{sqrtbound}. In particular,  $m(\beta_n)=n$ for all $n\geq 5$. 
\end{theorem}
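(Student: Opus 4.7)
The plan is to prove the two inequalities $m(\beta_n) \leq n$ and $m(\beta_n) \geq n$ separately. The upper bound is immediate: the construction realizes $\beta_n$ on a Legendrian $n$-mosaic, so $m(\beta_n) \leq n$ by definition.

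For the lower bound, I would apply Theorem \ref{sqrtbound}, which reduces the task to a direct computation of $\operatorname{tb}(\beta_n)$ from the mosaic. Using $\operatorname{tb} = P - N - C/2$, I would tally: $P = 0$; $N$ is the number of $T_{10}$ crossings in the checkerboard packing of the $(n-2)\times (n-2)$ inner board; and $C$ combines the $2n-2$ cusps saturating the outer boundary with the cusps carried by the $T_8$ tiles filling the non-crossing interior positions, together with the two corner tiles $T_1$ and $T_3$ in the even case. Splitting by parity and adding should yield, for odd $n$, the extremal value $\operatorname{tb}(\beta_n) = -(n-2)^2 - (n-1)$, so that the crab bucket literally saturates the key inequality in the proof of Theorem \ref{sqrtbound} (every interior tile contributes the maximal $-1$ to tb, and every pair of boundary tiles contributes $-1/2$); substituting back then collapses the radical to $\sqrt{(n-3/2)^2} + 3/2 = n$. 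For even $n$, the cells occupied by $T_1$ and $T_3$ introduce a small amount of slack, but an elementary check confirms the ceiling still rounds up to $n$.

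The main obstacle is correctly accounting for the cusp multiplicity of each interior tile type — in particular, recognizing that $T_8$ carries two cusps and hence contributes $-1$ (not $-1/2$) per tile to $\operatorname{tb}$ — since an undercount of a single cusp per interior tile would cost $\Theta(n^2)$ in $\operatorname{tb}$ and cause the bound to fall short for large $n$. A useful sanity check, providing an independent route to the same $\operatorname{tb}$ value, is to use Observation \ref{cb-obs} together with the Legendrian connected-sum formula $\operatorname{tb}(\Lambda_1 \# \Lambda_2) = \operatorname{tb}(\Lambda_1) + \operatorname{tb}(\Lambda_2) + 1$ applied to the prescribed decomposition of $\beta_n$ into Legendrian $(2,q)$-torus summands, plugging in the known maximal $\operatorname{tb}$ of the negative $(2,q)$-torus knots.
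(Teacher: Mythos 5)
Your overall strategy matches the paper's: the upper bound $m(\beta_n)\leq n$ is immediate from the construction, and the lower bound comes from computing $\tb(\beta_n)$ tile-by-tile and feeding it into Theorem \ref{sqrtbound}. You also correctly identify the one subtle tile count (each interior $T_8$ carries two cusps, so every interior tile in the odd case contributes $-1$). However, your boundary count is wrong, and it is not a harmless slip: you assert that the boundary saturates the $b/2$ cusp bound, contributing $2n-2$ cusps, which would give $\tb(\beta_n)=-(n-2)^2-(n-1)$ for odd $n$ and make the radical collapse exactly to $\sqrt{(n-3/2)^2}+3/2=n$. In fact the crab bucket's boundary contains only $(n-1)+(n-3)=2n-4$ cusps when $n$ is odd (and $2n-6$ when $n$ is even), so $\tb(\beta_n)=-(n-2)^2-(n-2)=-(n-1)(n-2)$, one more than your value. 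You can check this against the paper's census: $\beta_5$ is the negative trefoil with $\tb(\beta_5)=-12=-(4)(3)$, whereas your formula predicts $-13$. With the correct $\tb$, the crab buckets do \emph{not} literally saturate the inequality in Theorem \ref{sqrtbound}, and the bound it yields is $\lceil f(n)\rceil$ with $f(x)=\sqrt{(x-1)(x-2)-3/4}+3/2<n$; the missing step, which is the actual crux of the paper's proof, is to show $0<n-f(n)<1$ (the paper shows $x-f(x)$ is monotone decreasing with $0<x-f(x)<0.15$ for $x\geq 5$, and runs an analogous estimate with slack $<0.5$ in the even case), so that the ceiling still rounds up to $n$. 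Your outline only invokes a ceiling argument in the even case, attributing the slack there to the $T_1$ and $T_3$ tiles; the same verification is needed in the odd case too.

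Your proposed sanity check is also flawed. The decomposition in Observation \ref{cb-obs} is topological: the torus-knot summands are \emph{stabilized} Legendrian representatives, not maximal-$\tb$ ones, so applying $\tb(\Lambda_1\#\Lambda_2)=\tb(\Lambda_1)+\tb(\Lambda_2)+1$ with the known maximal $\tb$ of negative $(2,q)$-torus knots does not reproduce $\tb(\beta_n)$. For instance, $\beta_5$ is a single $(2,3)$ summand with maximal $\tb$ equal to $-6$, yet $\tb(\beta_5)=-12$; for $\beta_7=(2,3)\#(2,5)\#(2,3)$ the maximal-$\tb$ computation gives $-6-10-6+2=-20$, versus the true value $-(6)(5)=-30$. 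So this cross-check would have "confirmed" neither your value nor the correct one, and cannot serve as an independent verification.
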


\begin{proof}
    By construction, $n\geq m(\beta_n)$ for all $n\geq 5$.
    With this in mind, we will begin by considering all odd $n\geq 5$.
    By construction, every tile on the inner board of the crab bucket mosaic projection of $\beta_n$ contains either a negative crossing or two cusps.
    Therefore, these $(n-2)^2$ inner tiles each contribute $-1$ to $\operatorname{tb}(\beta_n)$. Meanwhile, the boundary tiles collectively contain $(n-1)+(n-3)=2n-4$ cusps and no crossings, so
    \[
    \operatorname{tb}(\beta_n) = -(n-2)^2 -\frac{1}{2}(2n-4) = -(n-1)(n-2).
    \]
    In particular, $\tb(\beta_n)<0$, so Theorem \ref{sqrtbound} implies that if we define
    \[f(x):=\sqrt{(x-1)(x-2)-\frac{3}{4}} + \frac{3}{2},\]
    then for all odd $n\geq 5$, we have
    \[
    m(\beta_n)\geq \lceil f(n)\rceil.
    \]
    However, $0.15>x-f(x)> 0$ for all $x\geq 5$. (Indeed, some calculus shows that $x-f(x)$ is monotone decreasing on $(2.5,\infty)$ and converges to 0 as $x$ tends toward infinity.) Hence, $\lceil f(n)\rceil = n$. Since $n \geq m(\beta_n)$,
    this proves the claim for all odd $n\geq 5$.

    Now, suppose $n\geq 6$ is even.
    The only tiles in the inner board not containing crossings or cusps are the $T_1$ tile in position $(2, n-1)$ and the $T_3$ tile in position $(n-1,2)$, each contributing 0 to $\tb(\beta_n)$.
    By construction, each of the other tiles in the inner board contains either a negative crossing or two cusps.
    Therefore, these $(n-2)^2-2$ tiles each contribute $-1$ to $\operatorname{tb}(\beta_n)$. Meanwhile, the boundary tiles collectively contain $(n-2)+(n-4)=2n-6$ cusps and no crossings, so
    \[
    \operatorname{tb}(\beta_n) = -\left[(n-2)^2 -2\right] -\frac{1}{2}(2n-6) = -n^2+3n+1.
    \]
    In particular, $\tb(\beta_n)<0$, so Theorem \ref{sqrtbound} implies that if we define
    \[g(x):=\sqrt{x^2-3x-1-\frac{3}{4}} + \frac{3}{2},\]
    then for all even $n\geq 6$, we have
    \[
    m(\beta_n)\geq \lceil g(n)\rceil.
    \]
    However, $0.5>x-g(x)> 0$ for all $x\geq 6$. (Once again, some calculus shows that $x-g(x)$ is monotone decreasing on $(3.5,\infty)$ and converges to 0 as $x$ tends toward infinity.) Hence,
    $\lceil g(n)\rceil = n $. Since $n\geq m(\beta_n)$, this proves the claim for all even $n\geq 6$.
\end{proof}

\subsection{A Linear Algebraic Approach} \label{linear}

 We now provide an alternative method for deriving bounds similar to those obtained in the previous section.
 Although the bounds are similar (in fact, slightly weaker), the approach is more robust. 


When working with the oriented tiles in Figure \ref{fig:oriented_tiles}, it is possible to track the effect each individual tile has on the classical invariants of the Legendrian knot it is a part of. For any oriented tile $R_i$, define $\tb^*(R_i) := P-N-\frac{1}{2}C$, where $P$ and $N$ are the number of positive and negative crossings appearing in $R_i$, respectively, and $C$ is the number of cusps appearing in $R_i$. In a similar fashion, define $\rot^*(R_i) := \frac{1}{2}(D-U)$, where $D$ and $U$ are the number of downward- and upward-oriented cusps, respectively, appearing in $R_i$. These definitions naturally lead to the following proposition. 


\begin{prop}\label{prop:classical_sum}
    Let $M$ be an oriented knot mosaic representing a Legendrian knot $\Lambda$. Then
    \begin{itemize}
        \item[i.] $\tb(\Lambda)=\sum_{i=0}^{24}|M|_{R_i}\tb^*(R_i)$, and
        \item[ii.] $\rot(\Lambda)=\sum_{i=0}^{24}|M|_{R_i}\rot^*(R_i)$.
    \end{itemize}
\end{prop}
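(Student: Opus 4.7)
The plan is to observe that Proposition \ref{prop:classical_sum} is essentially a bookkeeping identity: the classical invariants $\tb(\Lambda)$ and $\rot(\Lambda)$ are defined as linear combinations of global counts (positive crossings $P$, negative crossings $N$, cusps $C$, downward cusps $D$, upward cusps $U$), and each of these features lives inside exactly one tile of the mosaic. So once we write the global counts as sums of per-tile counts, linearity does the rest.

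More concretely, I would proceed as follows. First, inspect Figure \ref{fig:Legtiles} and note that every crossing in the front projection is the center of a $T_9$ or $T_{10}$ tile, and every cusp is contained in the interior of a single $T_5$, $T_6$, $T_7$, or $T_8$ tile (together with its oriented refinements in Figure \ref{fig:oriented_tiles}); crossings and cusps never lie on a shared edge between tiles. Consequently, if we let $P(R_i)$, $N(R_i)$, $C(R_i)$, $D(R_i)$, $U(R_i)$ denote the number of positive crossings, negative crossings, cusps, downward-oriented cusps, and upward-oriented cusps in the tile $R_i$, then
\[
P = \sum_{i=0}^{24} |M|_{R_i} P(R_i), \quad N = \sum_{i=0}^{24} |M|_{R_i} N(R_i), \quad C = \sum_{i=0}^{24} |M|_{R_i} C(R_i),
\]
and analogous identities hold for $D$ and $U$.

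To conclude, substitute these expressions into the definitions
\[
\tb(\Lambda) = P - N - \tfrac{1}{2}C, \qquad \rot(\Lambda) = \tfrac{1}{2}(D-U),
\]
and pull the sums outside by linearity. The inner expression for the $i$th term is exactly $\tb^*(R_i) = P(R_i) - N(R_i) - \tfrac{1}{2}C(R_i)$ in the first case, and $\rot^*(R_i) = \tfrac{1}{2}(D(R_i) - U(R_i))$ in the second, yielding both formulas.

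There is essentially no obstacle here: the only thing to verify carefully is the claim that crossings and cusps each live in the interior of a unique tile, so that the global counts really do decompose as sums of per-tile counts. Once this observation is recorded, the proposition follows by a one-line linearity argument, and the proof can be written in just a few sentences.
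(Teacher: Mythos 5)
Your proposal is correct and coincides with the paper's own (implicit) justification: the paper states Proposition \ref{prop:classical_sum} without proof, presenting it as a direct consequence of the definitions of $\tb^*$ and $\rot^*$, and your tile-by-tile decomposition of $P$, $N$, $C$, $D$, $U$ followed by linearity is precisely that argument made explicit. One correction to your tile bookkeeping: in the Legendrian tile set $T_9$ is excluded (every crossing occurs in a $T_{10}$ tile), and the cusps live in $T_2$, $T_4$, and $T_8$ rather than in $T_5$ through $T_8$ (the paper counts cusp tiles via $c=\sum_{i\in\{2,4,8\}}|M|_{T_i}$ in the proof of Theorem \ref{tony_bound}, and Lemma \ref{lemma:vertical} explicitly treats oriented $T_5$ and $T_6$ tiles as cusp-free); this mislabeling does not affect the validity of your argument, since all that matters is that each crossing or cusp lies in the interior of exactly one tile.
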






For any $0\leq i\leq24$, define the vector

$$\textbf{p}_i:=\begin{pmatrix}\tb^*(R_i)\\\rot^*(R_i)\\h(R_i)\\v(R_i)\\1\end{pmatrix},$$
which records relevant properties of the tile $R_i$. The 1 in the final coordinate will help to keep track of how many tiles are contained in the mosaic. Define the $5\times25$ matrix

$$P:=\begin{bmatrix}\textbf{p}_0&\textbf{p}_1&\dots&\textbf{p}_{24}\end{bmatrix}.$$

When viewed as a linear transformation from $\R^{25}$ to $\R^{5}$, $P$ can be thought of as a matrix that transforms the amounts of each tile type present into a vector containing important properties describing the resulting mosaic. This idea is captured by Proposition \ref{prop:P_relavance}.

\begin{prop}\label{prop:P_relavance}
    Let $M$ be an $n\times n$ oriented knot mosaic representing a Legendrian knot $\Lambda$. Let
    $$\textbf{c}:=\begin{pmatrix}|M|_{R_0}\\|M|_{R_1}\\\vdots\\|M|_{R_{24}}\end{pmatrix}.$$
    Then the product
    $$P\textbf{c}=\begin{pmatrix}\tb(\Lambda)\\\rot(\Lambda)\\0\\0\\n^2\end{pmatrix}.$$
\end{prop}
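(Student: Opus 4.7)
The plan is to verify $P\textbf{c}$ coordinate by coordinate, appealing to the propositions already established in the paper. By the definition of matrix-vector multiplication, the $k$th coordinate of $P\textbf{c}$ equals $\sum_{i=0}^{24} |M|_{R_i} \cdot (\textbf{p}_i)_k$, so each row reduces to a weighted sum over the oriented tile types.

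First I would handle rows one and two: by construction, $(\textbf{p}_i)_1 = \tb^*(R_i)$ and $(\textbf{p}_i)_2 = \rot^*(R_i)$, so the first two coordinates of $P\textbf{c}$ are precisely the sums $\sum_{i=0}^{24}|M|_{R_i}\tb^*(R_i)$ and $\sum_{i=0}^{24}|M|_{R_i}\rot^*(R_i)$. These equal $\tb(\Lambda)$ and $\rot(\Lambda)$ respectively by Proposition \ref{prop:classical_sum}. Rows three and four similarly reduce to $\sum_{i=0}^{24}|M|_{R_i}h(R_i)$ and $\sum_{i=0}^{24}|M|_{R_i}v(R_i)$, both of which equal $0$ by Proposition \ref{prop:net_movement}.

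For the final coordinate, every entry of the bottom row of $P$ is a $1$, so the product is simply $\sum_{i=0}^{24}|M|_{R_i}$. Since $M$ is an $n\times n$ mosaic and every one of its $n^2$ tiles is (exactly) one of the oriented tiles $R_0, \ldots, R_{24}$ enumerated in Figure \ref{fig:oriented_tiles}, this sum counts each tile position exactly once and therefore equals $n^2$.

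There is no genuine obstacle here; the proposition is essentially a bookkeeping statement that repackages Propositions \ref{prop:classical_sum} and \ref{prop:net_movement} into matrix form. The only mild subtlety worth flagging is that the counts $|M|_{R_i}$ partition the tile positions of $M$, which is why the bottom row collapses cleanly to $n^2$; this is immediate from the definitions but deserves one sentence for completeness.
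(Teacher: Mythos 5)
Your proposal is correct and matches the paper's own proof essentially verbatim: the paper likewise carries out the matrix multiplication coordinate by coordinate, invokes Propositions \ref{prop:classical_sum} and \ref{prop:net_movement} for the first four entries, and observes that the bottom row of ones sums the tile counts to $n^2$. Your extra remark that the counts $|M|_{R_i}$ partition the $n^2$ tile positions is a harmless (and slightly more careful) elaboration of the same bookkeeping.
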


\begin{proof}
    Using the definitions of $P$ and $\textbf{c}$, carrying out the matrix multiplication, and then applying Propositions \ref{prop:classical_sum} and \ref{prop:net_movement} yields
    $$P\textbf{c}=\begin{pmatrix}\sum_{i=0}^{24}|M|_{R_i}\tb^*(R_i)\\\sum_{i=0}^{24}|M|_{R_i}\rot^*(R_i)\\\sum_{i=0}^{24}|M|_{R_i}h(R_i)\\\sum_{i=0}^{24}|M|_{R_i}v(R_i)\\\sum_{i=0}^{24}|M|_{R_i}\end{pmatrix}=\begin{pmatrix}\tb(\Lambda)\\\rot(\Lambda)\\0\\0\\n^2\end{pmatrix}.$$

    The last coordinate sums the amount of each tile type, resulting in the total number of tiles in the mosaic, $n^2$.
\end{proof}

We have now established the foundation needed to prove Theorem \ref{thm:lin-alg} below. Note that the first bound is precisely the bound in Theorem \ref{tony_bound}, and the second bound is always $1$ or $2$ lower than the bound in Theorem \ref{sqrtbound}. While the statement of Theorem \ref{thm:lin-alg} does not provide any sharper restrictions on mosaic number, this alternative approach suggests that the bounds in Section \ref{combo_bounds} cannot be significantly improved without considering the geometric properties of Legendrian knot mosaics.

\begin{theorem} \label{thm:lin-alg}
    Let $M$ be an $n\times n$ oriented Legendrian knot mosaic representing a Legendrian knot $\Lambda$.
    \begin{itemize}
        \item[i.] If $4|\rot(\Lambda)|+\tb(\Lambda)\geq0$, then $m(\Lambda)\geq\left\lceil\sqrt{4|\rot(\Lambda)|+\tb(\Lambda)}\right\rceil$.
        \item[ii.] If $\tb(\Lambda)\leq0$, then $m(\Lambda)\geq\left\lceil\sqrt{-\tb(\Lambda)}\right\rceil$.
    \end{itemize}
\end{theorem}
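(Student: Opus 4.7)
My plan is to apply a linear programming duality argument to the system encoded by $P$: for each part, I construct a row vector $\mathbf{w} = (w_1,w_2,w_3,w_4,w_5) \in \mathbb{R}^5$ such that every entry of $\mathbf{w}^T P$ is nonnegative. Since the count vector $\mathbf{c}$ from Proposition \ref{prop:P_relavance} has nonnegative entries, this forces $\mathbf{w}^T P \mathbf{c} \geq 0$; using Propositions \ref{prop:net_movement} and \ref{prop:P_relavance}, this inequality reduces to
\[
w_1 \tb(\Lambda) + w_2 \rot(\Lambda) + w_5 n^2 \geq 0,
\]
since the third and fourth coordinates of $P\mathbf{c}$ vanish. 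The weights $(w_1, w_2, w_5)$ are chosen to match the target inequality, while $(w_3, w_4)$ absorb tile-level discrepancies in horizontal and vertical motion.

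For part (ii), I would take $\mathbf{w} = (1,0,0,0,1)$, so that the $i$th coordinate of $\mathbf{w}^T P$ is $\tb^*(R_i)+1$. Inspecting Figure \ref{fig:oriented_tiles}, each oriented tile has $\tb^*(R_i)\geq -1$ (the minimum $-1$ being attained on tiles with a negative crossing or two cusps), so $\tb^*(R_i)+1\geq 0$ uniformly. This yields $\tb(\Lambda)+n^2\geq 0$, and since $\tb(\Lambda)\leq 0$, taking ceilings gives $m(\Lambda)\geq\lceil\sqrt{-\tb(\Lambda)}\rceil$.

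For part (i), I would first reverse orientation if necessary so that $\rot(\Lambda)\leq 0$, making $|\rot(\Lambda)|=-\rot(\Lambda)$; this does not change $\tb(\Lambda)$ or $m(\Lambda)$. The target inequality becomes $n^2+4\rot(\Lambda)-\tb(\Lambda)\geq 0$, so I look for $\mathbf{w}$ of the form $(-1,4,w_3,w_4,1)$ with
\[
\ell(R_i) := -\tb^*(R_i)+4\rot^*(R_i)+w_3 h(R_i)+w_4 v(R_i)+1 \geq 0
\]
for all $25$ oriented tiles. Tiles without cusps or positive crossings are handled automatically. The genuine difficulty is compensating for tiles whose cusps are upward-oriented, where $\rot^*(R_i) = -\tfrac{1}{2}$ (or $-1$ for a two-upward-cusp tile) produces a deficit. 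Since upward cusps correspond to positive net vertical motion $v(R_i)$ through the cusp tile, a carefully chosen positive $w_4$ should cancel the deficit, and horizontal-reflection symmetry of the tile set suggests $w_3 = 0$. Pinning down the exact constants and completing the $25$-case verification is essentially bookkeeping.

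The conceptual hard part is recognizing that the LP-duality viewpoint is the right organizing framework: it repackages the three separate inequalities used in the proof of Theorem \ref{tony_bound} --- one on crossings, one on cusp tiles, and one on remaining tiles --- as a single nonnegative combination certificate. It also makes transparent why the same method cannot distinguish boundary from interior tiles, and hence why part (ii) falls $1$ or $2$ short of Theorem \ref{sqrtbound}: any sharper bound would require geometric information that this purely linear counting approach simply cannot access.
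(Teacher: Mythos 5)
Your proposal is correct, but it takes a genuinely different route from the paper's. The paper argues \emph{primally}: it computes the entire image cone $P(\R^{25}_{\geq 0})$ via a right inverse $Q$ and $\ker(P)$, runs quantifier elimination in Mathematica to describe $P(\R^{25}_{\geq 0})\cap V$, and then spends Appendix \ref{app:lin-alg-details} on a five-case simplification of the machine output into the two stated inequalities. You argue \emph{dually}, exhibiting for each inequality a Farkas-type certificate $\mathbf{w}$ with $\mathbf{w}^{T}P\geq 0$ entrywise. Your part (ii) certificate $(1,0,0,0,1)$ is exactly right: $\tb^*(R_i)\geq -1$ for every oriented tile, with equality only on negative crossings and double-cusp $T_8$ tiles. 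For part (i), the constants you deferred do exist and are in fact forced: normalizing the orientation so that $U\geq D$ (equivalently $\rot(\Lambda)\leq 0$), take $w_3=0$ and $w_4=1$. The binding constraints are the $T_8$ tile with two upward cusps, where $\ell=-\tb^*+4\rot^*+w_4v+1=2w_4-2$ forces $w_4\geq 1$, and the negative crossing with $v=-2$, where $\ell=2-2w_4$ forces $w_4\leq 1$; positive crossings and downward-oriented $T_5$/$T_6$ tiles are then tight at $\ell=0$, single upward cusps give $\ell=\tfrac{1}{2}$, and all remaining tiles give $\ell\geq 1$, so $n^2-\tb(\Lambda)+4\rot(\Lambda)\geq 0$ follows, i.e.\ $n^2\geq 4|\rot(\Lambda)|+\tb(\Lambda)$. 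So your ``bookkeeping'' genuinely closes, and what your approach buys is a short, computer-free proof: twenty-five finite checks against Figure \ref{fig:oriented_tiles} replace quantifier elimination and the appendix case analysis. What the paper's approach buys is completeness: it determines $P(\R^{25}_{\geq0})\cap V$ \emph{exactly}, establishing that these two inequalities are the only constraints this linear relaxation can yield, which is what licenses the paper's remark that the bounds of Section \ref{combo_bounds} cannot be significantly improved without geometric input. Your duality framing recovers that remark only as a plausibility observation (a certificate shows validity of an inequality, not that no stronger one exists), so if you wanted that conclusion too you would still need the primal computation.
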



\begin{proof}
    Consider the vector $\textbf{c}$ defined in Proposition  \ref{prop:P_relavance} above.
    Since each $|M|_{R_i}$ is nonnegative, we have
    $$\textbf{c}\in\R^{25}_{\geq0}=\{(x_1,x_2,\dots,x_{25})\in\R^{25}:x_i\geq0\text{ for all }1\leq i\leq 25\}.$$
    Thus, $P\textbf{c}\in P(\R^{25}_{\geq0})$, where $P(\R^{25}_{\geq0})$ is the image of $\R^{25}_{\geq0}$ under the linear transformation $P$.
    Define 
    $$V:=\{(x_1,x_2,x_3,x_4,x_5)\in\R^5:x_3=x_4=0\}.$$ 
    By Proposition \ref{prop:net_movement}, any $\textbf{c}\in\R^{25}_{\geq0}$ which represents a valid Legendrian knot mosaic will have $P\textbf{c}\in V$. So we must have $P\textbf{c}\in P(\R^{25}_{\geq0})\cap V$. To calculate $P(\R^{25}_{\geq0})$, we employ the right inverse of $P$, which we will denote as $Q$. (Note that  $Q$ must exist as it can be confirmed  computationally that $P$ has full rank.) Define
    $$S=\{\textbf{y}\in\R^5:\text{There exists } \textbf{v}\in\ker(P)\text{ such that }Q\textbf{y}+\textbf{v}\in\R^{25}_{\geq0}\}.$$

    We claim that $P(\R^{25}_{\geq0})=S$. If $\textbf{y}\in S$, then for some $\textbf{v}\in\ker(P)$,
    $$\textbf{y}=PQ\textbf{y}+\textbf{0}=PQ\textbf{y}+P\textbf{v}=P(Q\textbf{y}+\textbf{v})\in P(\R^{25}_{\geq0}).$$
    So $S\subseteq P(\R^{25}_{\geq0})$. Conversely,  if $\textbf{y}\in P(\R^{25}_{\geq0})$, there  exists $\textbf{x}\in\R^{25}_{\geq0}$ such that $P\textbf{x}=\textbf{y}$. Let $\textbf{v}=\textbf{x}-Q\textbf{y}$. Then,
    $$P\textbf{v}=P(\textbf{x}-Q\textbf{y})=P\textbf{x}-PQ\textbf{y}=\textbf{y}-\textbf{y}=\textbf{0}.$$
    So, $\textbf{v}\in\ker(P)$.
    We also have
    $$Q\textbf{y}+\textbf{v}=Q\textbf{y}+\textbf{x}-Q\textbf{y}=\textbf{x}\in\R^{25}_{\geq0}.$$
    So, by definition, $\textbf{y}\in S$. Thus, $P(\R^{25}_{\geq0})=S$.

    We can now take advantage of this fact to calculate $P(\R^{25}_{\geq0})$ using the definition of $S$. Using a computer algebra system such as Mathematica to carry out quantifier elimination and remove all dependencies on $\textbf{v}$ from the system of inequalities, results in a set of inequalities which constrain $P(\R^{25}_{\geq0})\cap V$. The details of this calculation can be found in Appendix \ref{app:lin-alg-details}. The calculation tells us that if $\textbf{c}\in\R^{25}_{\geq0}$, both of the following inequalities must be satisfied.
    \begin{align*}
    n^2&\geq4|\rot(\Lambda)|+\tb(\Lambda);\\
        n^2&\geq-\tb(\Lambda).
    \end{align*}
  Restricting the domains to satisfy $4|\rot(\Lambda)|+\tb(\Lambda)\geq0$ and $-\tb(\Lambda)\geq0$, respectively, yields  
    \begin{align*}
        n&\geq\sqrt{4|\rot(\Lambda)|+\tb(\Lambda)};\\
        n&\geq\sqrt{-\tb(\Lambda)},
    \end{align*}
    as desired.
 
\end{proof}

\section{Upper Bounds for Legendrian Unknots} \label{unknot-section}

\subsection{Barn Tiles and Soil Setups}

In \cite{pezzimenti}, Pezzimenti and Pandey constructed an infinite family of Legendrian unknots 
called \emph{the Kraken sequence}
that realize their mosaic numbers only in non-reduced projections (i.e., projections having more than the minimum number of crossings). 
In this section, we define modified configurations for Legendrian $n$-mosaics. We also define operations on these configurations using components from the Kraken sequence. We will use these configurations to give a method of constructing mosaics of Legendrian unknots. From this construction, we obtain an upper bound on the mosaic number of any Legendrian unknot.

    \begin{figure}
        \centering
        \includegraphics[width=0.95\linewidth]{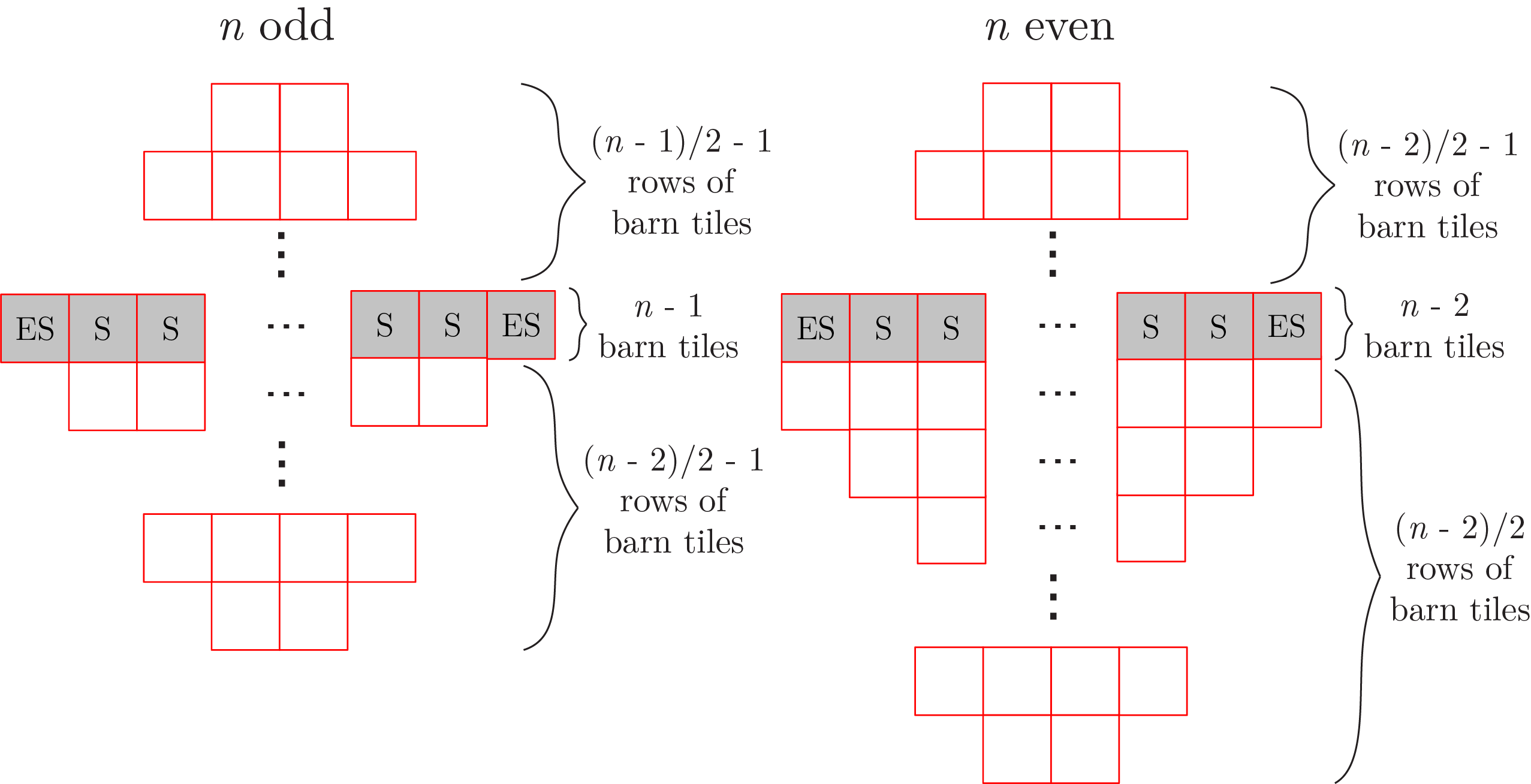}
        \caption{The default barn configuration of a Legendrian $n$-mosaic.}
        \label{fig:soil-setup-proof}
    \end{figure}

We begin the construction by defining a specified mosaic configuration.
Given a Legendrian $n$-mosaic, a \textbf{barn configuration} is a grid formed by connecting the corners of adjacent tiles. Each resulting square is called a \textbf{barn tile}, named so because the ``X"-shape in each barn tile resembles the cross-brace design on sliding barn doors.
Explicit examples of barn tiles are shown in Figures \ref{fig:default-soil} and \ref{fig:barn-effects}.

While there are several valid barn configurations of a particular $n$-mosaic, we are interested in the one such that the tile in position $(1,n)$ is not contained in any barn tiles. We call this specific barn configuration the \textbf{default barn configuration}.
In the default barn configuration, there are $n-2$ rows of barn tiles. In particular, if $n$ is odd, there are rows of barn tiles of lengths $2, 4,\dots, n-1,n-3,\dots, 2$, going from top to bottom. If $n$ is even, there are rows of barn tiles of lengths $2,4,\dots, {n-2},{n-2},{n-4},{n-6},\dots,2$, going from top to bottom. Figures \ref{fig:soil-setup-proof} and \ref{fig:default-soil} depict the geometry of the default barn configuration for both odd and even $n$.

\begin{figure}
    \centering
    \includegraphics[width=0.7\linewidth]{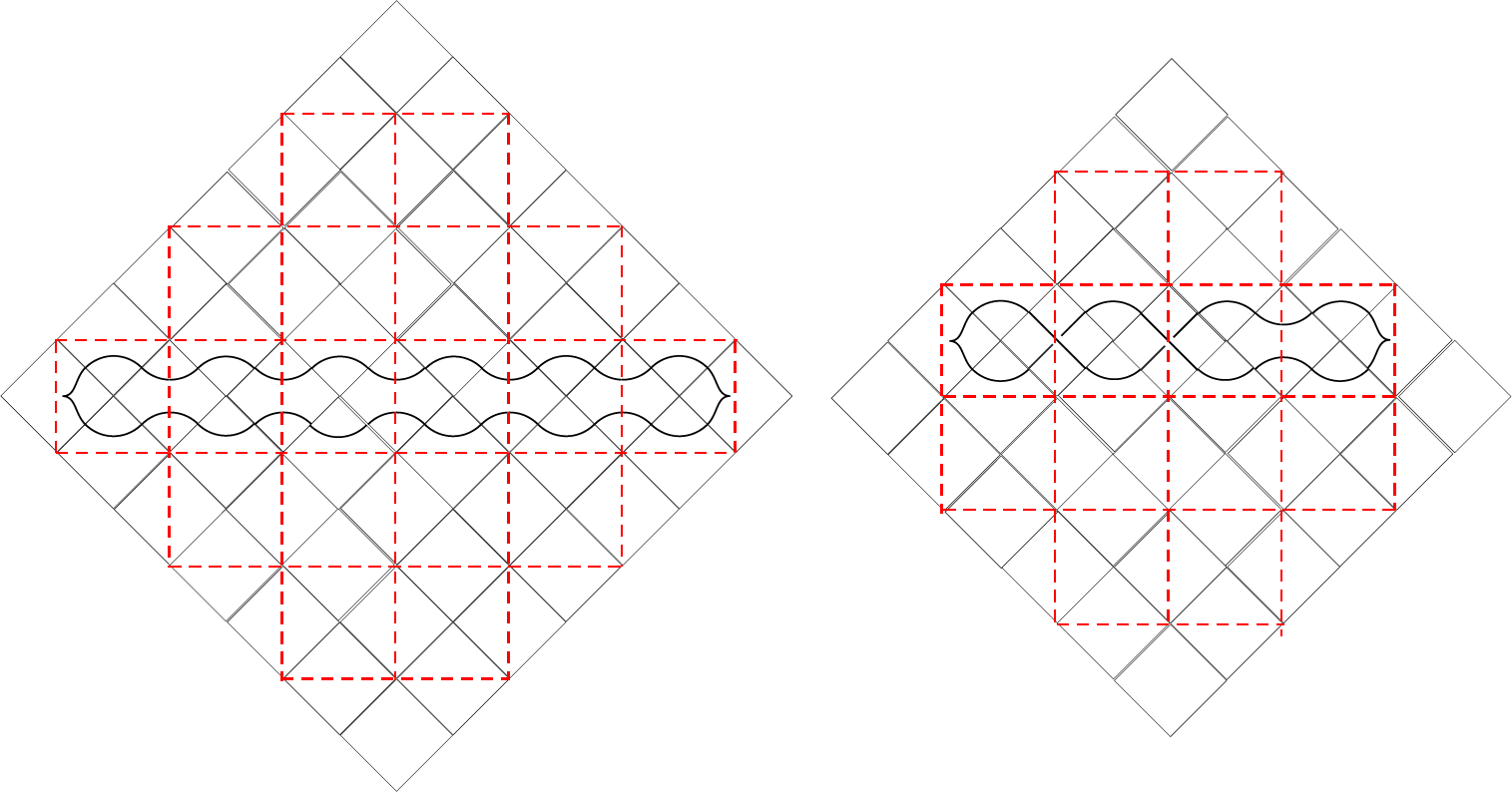}
    \caption{A Legendrian 7-mosaic and a Legendrian 6-mosaic, each having a soil setup. Note that each mosaic depicts an unknot.}
    \label{fig:default-soil}
\end{figure}

We  begin to fill the default barn configuration with tiles as follows. Let $i:=0$ if $n$ is odd and $i:=1$ if $n$ is even. Let the mosaic tiles in positions ${(1,1+i)}$ and $(n-i,n)$ be $T_2$ and $T_4$, respectively. Also, let the tiles in positions $(1,2+i),(2,3+i),\dots,(n-1-i,n)$ be $T_1$. Likewise, let the tiles in positions ${(2,1+i)},{(3,2+i)},\dots,{(n-i,n-1)}$ be $T_3$. 
Now, consider the $n-2-i$ mosaic tiles in positions ${(2,2+i)},{(3,3+i)},\dots,$ ${(n-1-i,n-1)}$, and let each of them be either $T_7$ or $T_{10}$. We call the 
$n-1-i$ barn tiles intersecting these mosaic tiles \textbf{soil tiles}. In particular, we call the leftmost and rightmost soil tiles \textbf{edge-soil tiles}. Note that the soil tiles are precisely the barn tiles in the $[(n-1-i)/2]$th row of barn tiles.

If we insert tiles in a Legendrian $n$-mosaic having the default barn configuration exactly as described above, then we say the resulting mosaic has a \textbf{soil setup}. Figure \ref{fig:default-soil} gives examples of soil setups for $n=7$ and $n=6$. Note that any barn tile in a soil setup is empty if and only if it is not a soil tile.
By construction, we have the following:
\begin{obs} \label{obs:default-soil}
    If $M$ is a Legendrian $n$-mosaic having a soil setup, then $M$ depicts a Legendrian unknot $\Lambda_U$ with $\tb(\Lambda_U)=-|M|_{T_{10}}-1$. Moreover, $\rot(\Lambda_U)=0$ if $|M|_{T_{10}}$ is even, and $\rot(\Lambda_U)=\pm 1$ if $|M|_{T_{10}}$ is odd.
\end{obs}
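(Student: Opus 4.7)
The plan is to verify the three claims in sequence — that $M$ depicts an unknot, together with the stated formulas for $\tb$ and $\rot$ — by direct bookkeeping on the soil setup combined with an induction on $|M|_{T_{10}}$.

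First, I would confirm that $M$ depicts a single closed curve by tracing through the connection points of the filled tiles in the soil setup. Starting at $T_2$ at position $(1,1+i)$, the strand threads along the upper $T_1$ diagonal, crosses the central soil diagonal through the $T_7$ and $T_{10}$ tiles (each of which interconnects all four of its connection points), continues along the lower $T_3$ diagonal, and closes up at $T_4$ at position $(n-i,n)$. This traces out exactly one loop, so $M$ depicts a knot. To see that the knot is an unknot, I would induct on $|M|_{T_{10}}$: in the base case $|M|_{T_{10}} = 0$ the mosaic depicts a simple oval (the standard unknot); in the inductive step, replacing a single $T_{10}$ by its smoothing $T_7$ removes one self-crossing in a way that is smoothly isotopic to a Reidemeister~I move on the underlying curve, and so preserves the smooth knot type.

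For the Thurston--Bennequin number, I would enumerate the tile-by-tile contributions to $\tb = P - N - \tfrac{1}{2}C$. The only crossings in $M$ arise from the $T_{10}$ soil tiles, and the front-projection convention that the overstrand must have the more negative slope forces each such crossing to be negative, so $P - N = -|M|_{T_{10}}$. A direct inspection of the soil setup shows that the only tiles containing cusps are the capping tiles $T_2$ and $T_4$, contributing $C = 2$. Combining these yields $\tb(\Lambda_U) = -|M|_{T_{10}} - 1$.

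For the rotation number, I would fix an orientation of $\Lambda_U$ and compare how the two end cusps at $T_2$ and $T_4$ are traversed. When $|M|_{T_{10}} = 0$, the oval has one cusp traversed upward and the other downward, so $\rot = 0$. Each $T_{10}$ tile swaps which of the two soil strands exits on the upper versus the lower side of its position, which propagates through the rest of the loop to flip the traversal direction at the far end cusp. By induction on $|M|_{T_{10}}$, the two end cusps are traversed in opposite directions precisely when $|M|_{T_{10}}$ is even (giving $\rot = 0$) and in the same direction precisely when $|M|_{T_{10}}$ is odd (giving $\rot = \pm 1$, with the sign depending on the chosen orientation).

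I expect the main obstacle to be the orientation-parity step in the $\rot$ computation. This requires a careful local check that a $T_{10}$ tile interchanges the upper and lower strands passing through it, together with the propagation argument showing that this interchange flips the traversal direction of the downstream cusp. The unknot claim and the $\tb$ computation, by contrast, reduce to fairly direct bookkeeping on the soil setup once its structure is unpacked.
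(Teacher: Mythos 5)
Your overall route is the same as the paper's: the paper records this statement as an observation that follows directly from the construction, adding only the remark that the two cusps in the edge-soil tiles share the same orientation if and only if $|M|_{T_{10}}$ is odd, and your trace of the single closed component, the cusp count $C=2$, and the parity argument for $\rot$ are a fleshed-out version of exactly that bookkeeping. But one step in your $\tb$ computation rests on a false general principle. You assert that ``the front-projection convention that the overstrand must have the more negative slope forces each such crossing to be negative.'' This is not true: the sign of a crossing depends on the orientations of the two strands, not on the tile type, and $T_{10}$ tiles realize positive crossings as well --- the Legendrian positive trefoil in Figure \ref{fig:LegTrefMosaic} has three positive crossings, all on $T_{10}$ tiles, and the oriented tile table in Figure \ref{fig:oriented_tiles} contains $T_{10}$ orientations with $\tb^{*}(R_i)=+1$. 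As written, your argument would equally ``prove'' that every Legendrian mosaic has only negative crossings, which is absurd. The conclusion $P-N=-|M|_{T_{10}}$ does hold in the soil setup, but for a configuration-specific reason you need to supply: the knot runs out along the soil diagonal from one edge-soil cusp to the other and then back, so the two passes through any soil $T_{10}$ tile are antiparallel (equivalently, both strands in that tile move in the same vertical direction, which is exactly the characterization of negative crossings used in the proof of Lemma \ref{lemma:vertical}, where negative crossing tiles are those with $v(R_i)=\pm 2$). With that observation inserted, $\tb(\Lambda_U)=-|M|_{T_{10}}-\tfrac{1}{2}\cdot 2$ follows as you claim.

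A smaller imprecision: in your induction for unknottedness, replacing an \emph{arbitrary} $T_{10}$ by $T_7$ is not literally an undoing of a Reidemeister I kink, since the loop based at an interior crossing passes through all the crossings between it and the far cusp. The fix is to always remove the $T_{10}$ nearest a cusp (say the rightmost), where the intervening tiles are only $T_7$ passes and plain arcs, so the crossing genuinely bounds a kink; the count then descends to the plain oval. Alternatively, both this and the $\tb$ formula follow at once from the remark in Section \ref{sec:basics} that inserting a twist at a cusp is a stabilization: the soil setup is the standard $\tb=-1$ oval stabilized $|M|_{T_{10}}$ times, which also yields the $\rot$ parity you prove by propagation.
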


The last part of this observation follows from the fact that the two cusps in the edge-soil tiles of $M$ share the same orientation if and only if $|M|_{T_{10}}$ is odd.

\begin{figure}[h]
    \centering
    \includegraphics[width=0.5\linewidth]{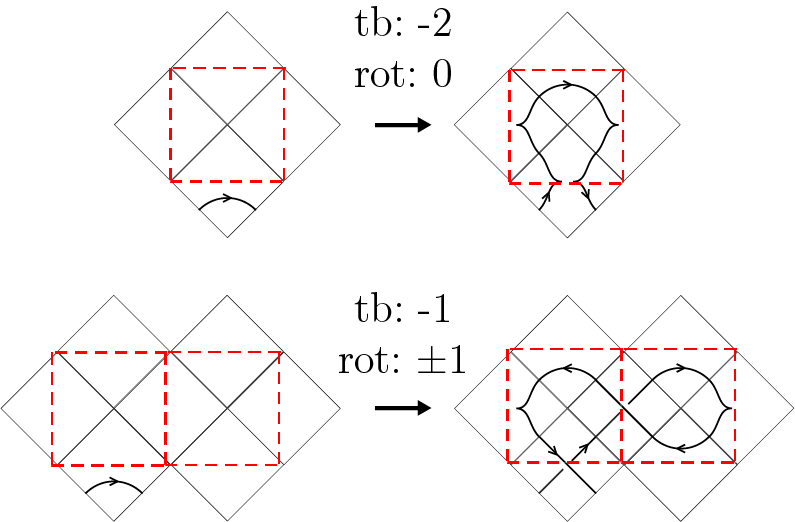}
    \caption{The Kraken (top) and fish (bottom) moves and their effects on the classical invariants.}
    \label{fig:barn-effects}
\end{figure}

Next, we define two moves which will ``fill" the non-soil barn tiles. The resulting structures will appear to ``grow out of the soil" vertically.
The first of these moves is inspired by the Kraken sequence of \cite{pezzimenti}. Consider an empty barn tile inscribed in four mosaic tiles. Among these four mosaic tiles, suppose that either the bottom tile is $T_1$ or the top tile is $T_3$. Then, we define the \textbf{Kraken move} as the move depicted in the first row of Figure \ref{fig:barn-effects}, up to reflection about the horizontal axis.

Now, consider two adjacent empty barn tiles inscribed in seven mosaic tiles. Among these seven mosaic tiles, suppose that at least one of the following is true: one or both of the bottommost tiles is $T_1$, or one or both of the topmost tiles is $T_3$. Then, we define the \textbf{fish move} (named for its fish-like appearance) as the move depicted in the second row of Figure \ref{fig:barn-effects}, up to reflections about the horizontal axis, the vertical axis, or both axes. Note that the fish move changes one of the aforementioned $T_1$ or $T_3$ tiles to a $T_{10}$ tile. 

By Figure \ref{fig:barn-effects}, performing a fish move on two empty barn tiles changes both of the mosaic tiles containing the top (resp.\ bottom) edges of those barn tiles to $T_1$ (resp.\ $T_3$). A similar statement holds for Kraken moves. In a soil setup, the mosaic tile containing the top (resp.\ bottom) edge of any soil tile is also $T_1$ (resp.\ $T_3$).
Therefore, we have the following:

\begin{obs} \label{sufficient}
    If $M$ is a Legendrian $n$-mosaic with a soil setup and $M'$ is obtained by performing any number of fish and Kraken moves on $M$, then we can perform a Kraken move on any empty barn tile in $M'$ that lies directly above or beneath a nonempty barn tile. Similarly, we can perform a fish move on any pair of adjacent empty tiles in $M'$ that lie directly above or beneath a nonempty barn tile.
\end{obs}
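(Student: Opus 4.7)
The plan is to prove the observation by induction on the number of fish and Kraken moves applied to the original soil setup $M$, via the following structural invariant: in every Legendrian $n$-mosaic $M'$ obtained from $M$ by a finite sequence of fish and Kraken moves, every nonempty barn tile $B$ of $M'$ has its top-edge mosaic tile equal to $T_1$ and its bottom-edge mosaic tile equal to $T_3$. Call this property the \emph{top/bottom invariant}. Once this invariant is in hand, the observation follows from a single unpacking of the preconditions of the two moves.

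For the base case, the only nonempty barn tiles of $M$ are the soil tiles, and the paragraph preceding the statement of the observation explicitly records that in a soil setup the mosaic tile containing the top (respectively bottom) edge of any soil tile is $T_1$ (respectively $T_3$). For the inductive step, suppose $M'$ satisfies the invariant and $M''$ is produced by applying one further fish or Kraken move to empty barn tile(s) of $M'$. By the same paragraph (read off Figure \ref{fig:barn-effects}), both moves fill previously-empty barn tiles while setting the mosaic tiles on their top edges to $T_1$ and those on their bottom edges to $T_3$, so the newly-nonempty barn tiles acquire the invariant. The only mosaic-tile alteration at a previously-nonempty location is the $T_1 \to T_{10}$ swap performed by a fish move; by inspection of the fish-move diagram, this swap occurs at a side position of the acted-upon pair of barn tiles, not at the top or bottom of any barn tile, so the invariant of every previously-nonempty $B$ is untouched.

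With the invariant established, the observation is immediate. If an empty barn tile $B$ lies directly above a nonempty $B'$ in $M'$, the bottom mosaic tile of $B$ coincides with the top mosaic tile of $B'$, which is $T_1$ by the invariant; this is exactly the hypothesis required to perform a Kraken move on $B$. The symmetric ``beneath'' case supplies a $T_3$ at the top of $B$ in the same way. For the fish-move claim, if a pair of adjacent empty barn tiles lies above or beneath nonempty barn tiles, the shared edge between the pair and its nonempty neighbor(s) again supplies either a $T_1$ among the bottommost mosaic tiles or a $T_3$ among the topmost, which is exactly the fish-move precondition. The main obstacle in this proof is the geometric bookkeeping inside the inductive step, specifically verifying that the $T_1 \to T_{10}$ alteration introduced by a fish move never lands on the top or bottom mosaic tile of some other barn tile that lies in the same row; this requires consulting Figure \ref{fig:barn-effects} to see that the altered tile is genuinely on the ``side'' of the two acted-upon barn tiles and then matching this local picture against the global default barn configuration of the mosaic.
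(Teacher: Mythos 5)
Your overall strategy---induct on the number of moves while maintaining a ``$T_1$ on the top edge, $T_3$ on the bottom edge'' property---is exactly the skeleton of the paper's own justification, which appears in the paragraph immediately preceding the observation (the effect of the moves on edge tiles read off Figure \ref{fig:barn-effects}, plus the soil-setup base case). However, your invariant is stated too strongly, and the bookkeeping claim that is supposed to rescue it in the inductive step is factually wrong. The paper's definition of the fish move says explicitly that it ``changes one of the aforementioned $T_1$ or $T_3$ tiles to a $T_{10}$ tile,'' and the ``aforementioned'' tiles are, by definition, the \emph{bottommost} or \emph{topmost} of the seven inscribing mosaic tiles---precisely the tiles containing the bottom (or top) edges of the acted-upon pair, which coincide with the top (or bottom) edge tiles of the vertically adjacent nonempty barn tile being attached to. So the $T_1\to T_{10}$ swap does \emph{not} occur at a ``side position''; it lands squarely on the top or bottom edge tile of a previously nonempty barn tile. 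The Kraken move has the same feature: its new loop must be joined to the existing strand through the precondition tile (otherwise the move would create a split component, contradicting Lemma \ref{barn-lemma}, which asserts the result still depicts a single unknot), so that tile---again the top or bottom edge tile of a nonempty neighbor---is altered from $T_1$ (or $T_3$) to a connecting tile. Consequently your top/bottom invariant already fails after the very first Kraken move performed above a soil tile, and the inductive step as written does not go through.

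The gap is repairable, and the repair is the point of the observation: weaken the invariant to ``every nonempty barn tile whose vertical neighbor position above (resp.\ below) is \emph{empty} has its top (resp.\ bottom) edge tile equal to $T_1$ (resp.\ $T_3$).'' The alterations you need to worry about all occur at the attachment tile, which after the move is sandwiched between two nonempty barn tiles, so the conditional invariant never inspects it; meanwhile Figure \ref{fig:barn-effects} shows that the newly filled barn tiles acquire $T_1$/$T_3$ on the edges facing unfilled territory, which restores the invariant, and your final unpacking of the Kraken and fish preconditions then goes through verbatim. With that correction your argument coincides with the paper's (informal) reasoning rather than improving on it; as an aside, neither your write-up nor the paper addresses the marginal case of an empty barn tile sandwiched between two nonempty ones, where the ``before'' picture of the depicted move is not literally matched, but that case is not needed for the constructions in Theorem \ref{fish-bound}.
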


\begin{figure}
    \centering
    \includegraphics[width=0.95\linewidth]{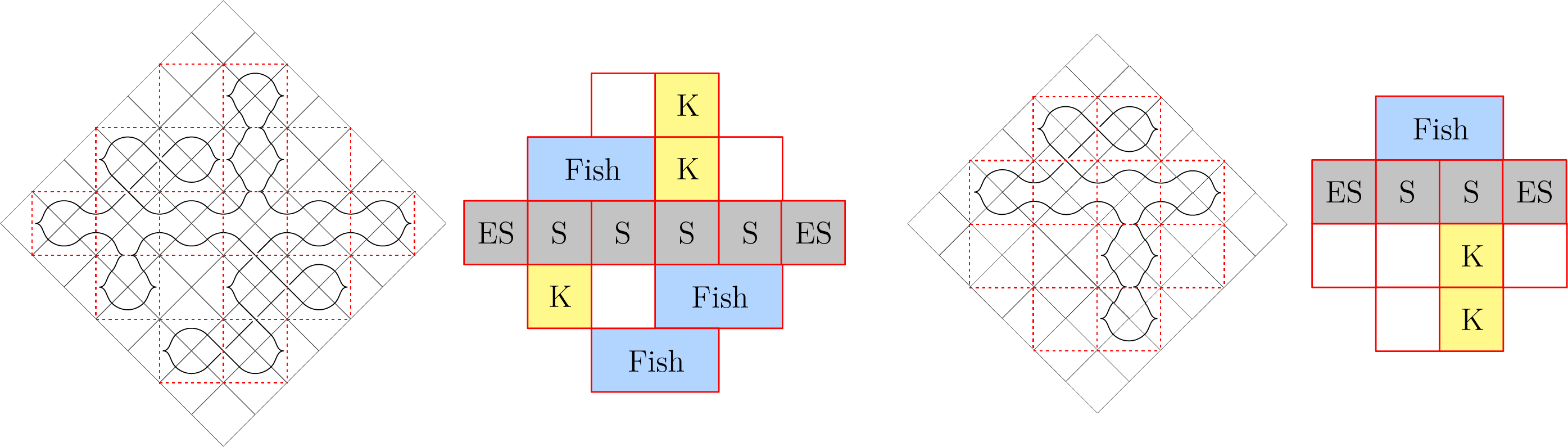}
    \caption{Two Legendrian $n$-mosaics, each created by performing Kraken and fish moves on an a mosaic having a soil setup. Note that these mosaics depict stabilized Legendrian unknots.}
    \label{fig:barn-setup}
\end{figure}

In other words, we can ``stack" fish and Kraken moves on vertically adjacent barn tiles.
Figures \ref{fig:barn-setup}, \ref{fig:barn-proof-ex}, and \ref{fig:unknot29} give examples of Legendrian $n$-mosaics produced by performing fish and Kraken moves on a soil setup in this fashion.

This observation also implies a necessary and sufficient condition for $f$ fish moves and $k$ Kraken moves to be possible on a mosaic having a soil setup:

\begin{lemma} \label{lem:enough-tiles}
    Let $M$ be a Legendrian $n$-mosaic having a soil setup. Then it is possible to perform $f$ fish moves and $k$ Kraken moves on $M$ if and only if
    \[
    2f+k\leq N,
    \]
    where
    \[
    N:=\begin{cases}
        \frac{(n-1)(n-3)}{2}, & 2\nmid n\\
        \frac{(n-2)^2}{2}, & 2\mid n.
    \end{cases}
    \]
\end{lemma}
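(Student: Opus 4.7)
The plan is to prove this iff by separating the counting and constructive directions. For the forward implication, the key observation is that each fish move consumes two empty barn tiles and each Kraken move consumes one, with all such tiles being non-soil (soil tiles are nonempty in any soil setup) and disjoint across distinct moves. Therefore $2f + k$ is bounded above by the total number of non-soil barn tiles, and a direct enumeration of the default barn configuration confirms this number equals $N$. Concretely, for odd $n$ the barn-tile rows from top to bottom have lengths $2, 4, \dots, n-1, n-3, \dots, 2$ with soil the middle length-$(n-1)$ row, giving a non-soil count of $2(2+4+\dots+(n-3)) = (n-1)(n-3)/2$. For even $n$ the rows have lengths $2, 4, \dots, n-2, n-2, n-4, \dots, 2$, with soil the upper length-$(n-2)$ row, yielding $(n-2)^2/2$.

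For the reverse implication, I would give an explicit construction of a valid sequence of moves. Enumerate the non-soil rows outward from soil in a single linear chain: the rows above soil by increasing distance, followed by the rows below soil by increasing distance. The widths of consecutive rows in this chain are nonincreasing (strictly decreasing, with the one exception that the first row below soil shares width $n-2$ with soil when $n$ is even), so every cell in a non-soil row has a vertical neighbor in the preceding row of the chain. I then fill rows sequentially: in each row of even length $\ell$, first place as many fish moves as the remaining fish quota allows using disjoint consecutive pairs of cells (at most $\ell/2$), and then fill the remaining cells with Kraken moves up to the remaining Kraken quota. Stop as soon as both quotas are exhausted; at most the final processed row is partially filled.

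The main obstacle is simultaneously verifying quota exhaustion and the validity of every move under Observation \ref{sufficient}. For exhaustion, the total capacity $\sum_r \ell_r = N \geq 2f+k$ guarantees space, while $\sum_r \ell_r/2 = N/2 \geq f$ (from $2f \leq N$) guarantees fish fit. For adjacency, every row processed before the last is completely filled, so when moves are placed in the current row its preceding row (either soil or an entirely nonempty row) supplies a nonempty vertical neighbor to every cell, and in particular to both cells of any fish pair placed in that row. The subtle geometric ingredient that makes the uniform adjacency argument succeed is the monotonicity of row widths outward from soil; once this is established, Observation \ref{sufficient} certifies each move as it is placed, completing the construction.
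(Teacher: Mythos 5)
Your proposal is correct and takes essentially the same approach as the paper's proof: necessity by counting the $N$ empty non-soil barn tiles (each fish move consumes two, each Kraken move one), and sufficiency by filling rows outward from the soil row using Observation \ref{sufficient}, placing fish moves on disjoint adjacent pairs within even-length rows --- the paper's argument is simply a terser statement of this same construction. One small caveat: the widths along your single chain are not literally nonincreasing at the transition from the rows above soil to the first row below soil, but this is harmless because what your argument actually uses (and what is true) is that each processed row's \emph{geometric} predecessor toward soil is at least as wide and already entirely nonempty, which is exactly how you invoke it.
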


\begin{proof}
    Since $M$ has a soil setup, $N$ is the number of non-soil barn tiles (equivalently, the number of empty barn tiles) appearing in $M$. Since fish moves fill two adjacent empty barn tiles and Kraken moves fill one empty barn tile, the necessity of the condition $2f+k\leq N$ is clear.
    
    Sufficiency follows from Observation \ref{sufficient} and the fact that every row of barn tiles in $M$ contains an even number of barn tiles. Note that if we label the barn tiles in each row of barn tiles as $B_1,\dots,B_m$ going left to right, then we perform fish moves only on pairs $B_i,B_{i+1}$ where $i$ is odd.
\end{proof}


Moreover, the Kraken move introduces two upward-oriented cusps and two downward-oriented cusps, while the fish move introduces two cusps of the same orientation, one positive trivial crossing, and one negative trivial crossing.  Figure \ref{fig:barn-effects} shows the effects of Kraken and fish moves on the Thurston-Bennequin number and rotation number. Furthermore, we have the following:
\begin{lemma} \label{barn-lemma}
    Let $M$ be a Legendrian $n$-mosaic having a soil setup. Let $M'$ be a mosaic obtained by performing $k$ Kraken moves and $f$ fish moves on $M$. Then $M'$ depicts a Legendrian unknot $\Lambda_U$ whose Thurston-Bennequin number is
    \[
    \tb(\Lambda_U) = -2k-f-|M|_{T_{10}}-1.
    \]
    Moreover, if $|M|_{T_{10}}$ is odd, then
    \[
    |\rot(\Lambda_U)| = f+1.
    \]
\end{lemma}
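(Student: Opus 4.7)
The strategy is to start from the baseline Legendrian unknot given by Observation \ref{obs:default-soil} for the soil setup, and then track the local contributions of each Kraken and fish move to the classical invariants.

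First, I would confirm that $M'$ depicts a Legendrian unknot. The soil setup $M$ depicts an unknot $\Lambda_U^{(0)}$ by Observation \ref{obs:default-soil}. Each Kraken or fish move inserts a small local configuration of cusps and (in the fish case) a canceling pair of trivial crossings. Both moves are realizations of Legendrian stabilizations within the mosaic, so they preserve the smooth isotopy class of the knot while introducing predictable changes in the classical invariants. Observation \ref{sufficient} ensures that each move may be legally applied to empty barn tiles lying directly above or beneath a nonempty barn tile, producing a suitably connected mosaic $M'$ that depicts a Legendrian unknot $\Lambda_U$.

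Next, I would compute $\tb(\Lambda_U)$ by summing local contributions. Observation \ref{obs:default-soil} gives $\tb(\Lambda_U^{(0)}) = -|M|_{T_{10}} - 1$. From the description of Figure \ref{fig:barn-effects}, a Kraken move introduces four cusps and no net crossings, contributing $\Delta\tb = -2$, while a fish move introduces two cusps together with one positive and one negative crossing, contributing $\Delta\tb = -1$. Summing over $k$ Kraken and $f$ fish moves yields
\[
\tb(\Lambda_U) = -|M|_{T_{10}} - 1 - 2k - f,
\]
as desired.

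Finally, I would compute $|\rot(\Lambda_U)|$ under the assumption $|M|_{T_{10}}$ odd. By the final remark of Observation \ref{obs:default-soil}, the two edge-soil cusps then share a common orientation, so $|\rot(\Lambda_U^{(0)})| = 1$. A Kraken move contributes two upward and two downward cusps, giving $\Delta\rot = 0$, while a fish move contributes two cusps of the same orientation, giving $\Delta\rot = \pm 1$. The claim $|\rot(\Lambda_U)| = f+1$ reduces to showing that every fish move contributes to $\rot$ with the same sign as the net contribution of the edge-soil cusps. I would establish this by a local analysis: in the default barn configuration, the orientation of the strand entering a fish-move site is determined by the orientations of the bordering $T_1$ and $T_3$ tiles, and these orientations propagate consistently upward and downward from the soil row along the oriented strand. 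The main obstacle is precisely this sign-consistency; verifying it calls for an induction on the stacking order of fish and Kraken moves guaranteed by Observation \ref{sufficient}, leveraging the rigid local geometry forced by the soil setup and the allowed variants of each move depicted in Figure \ref{fig:barn-effects}.
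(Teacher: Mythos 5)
Your proposal is correct and takes essentially the same approach as the paper: both deduce the unknot claim from Observation \ref{obs:default-soil} together with the triviality of the introduced crossings, then sum the per-move contributions from Figure \ref{fig:barn-effects} ($-2$ to $\tb$ per Kraken move, $-1$ per fish move, $0$ resp.\ $\pm 1$ to $\rot$) against the baseline $\tb=-|M|_{T_{10}}-1$ and $|\rot|=1$. The sign-consistency of the fish-move cusps with the edge-soil cusps, which you rightly identify as the crux, is exactly the step the paper also relies on (asserting it directly, in fact with less detail than your proposed propagation/induction argument).
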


\begin{proof}
    All crossings introduced by Kraken and fish moves are trivial, and neither move introduces any ``loose strands." It follows from Observation \ref{obs:default-soil} that $M'$ depicts a Legendrian unknot $\Lambda_U$. Combining Observation \ref{obs:default-soil} with Figure \ref{fig:barn-effects} yields the desired Thurston-Bennequin number. Now, if $|M|_{T_{10}}$ is odd, then all cusps introduced by fish moves share the same orientation as the two cusps in the edge-soil tiles of $M'$. Therefore, combining Observation \ref{obs:default-soil} with Figure \ref{fig:barn-effects} gives us the desired expression for $|\rot(\Lambda_U)|$.
\end{proof}

\subsection{Results}

In the proof of the bound in Theorem \ref{fish-bound} and Corollary \ref{cor:fishbound} below, we present an algorithm to construct a mosaic for any Legendrian unknot. The algorithm begins with a specified soil setup. Then, it performs specified numbers of Kraken and fish moves to achieve the desired classical invariants.

Note that the expressions under the radicals in (\ref{barn-nonzero}) and (\ref{improved-fish-ladder}) are always positive since $\tb(\Lambda_U)\leq -2$ for any Legendrian unknot $\Lambda_U$ satisfying $\rot(\Lambda_U)\neq 0$. Similarly, the expression under the radical in (\ref{barn-zero}) is always positive since the maximum Thurston-Bennequin number across all Legendrian unknots is $-1$.

\begin{theorem} \label{fish-bound}
   Let $\Lambda_U$ be a Legendrian unknot. If  $\rot(\Lambda_U)\neq 0$, then
    \begin{equation}
        \label{barn-nonzero}
        m(\Lambda_U) \leq \left\lceil \sqrt{3|\rot(\Lambda_U)|-\tb(\Lambda_U)-\frac{11}{4}} + \frac{3}{2} \right\rceil.
    \end{equation}
    If  $\rot(\Lambda_U)=0$, then
    \begin{equation}
        \label{barn-zero}
        m(\Lambda_U)\leq \left\lceil \sqrt{-\tb(\Lambda_U)+\frac{5}{4}} + \frac{3}{2} \right\rceil.
    \end{equation}
\end{theorem}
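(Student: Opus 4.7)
The plan is to give an explicit construction: start with a Legendrian $n$-mosaic having a soil setup with $|M|_{T_{10}}=j$ tiles of type $T_{10}$, then perform $k$ Kraken moves and $f$ fish moves, choosing $n,j,k,f$ so that the resulting unknot has the prescribed Thurston--Bennequin and rotation numbers. By Lemma~\ref{barn-lemma}, the invariants of the output are $\tb=-2k-f-j-1$ and, when $j$ is odd, $|\rot|=f+1$. By Lemma~\ref{lem:enough-tiles}, the construction fits inside the $n$-mosaic precisely when $2f+k\leq N$, where $N=(n-1)(n-3)/2$ for odd $n$ and $N=(n-2)^2/2$ for even $n$.

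For (\ref{barn-zero}), where $\rot(\Lambda_U)=0$, I take $f=0$ and pick $j$ to be the largest even integer not exceeding both $-\tb(\Lambda_U)-1$ and the admissible maximum (namely $n-3$ for odd $n$, or $n-4$ for even $n$). Every $\rot=0$ Legendrian unknot has odd Thurston--Bennequin number, so setting $k=(-\tb(\Lambda_U)-j-1)/2$ yields a nonnegative integer. The inequality $k\leq N$ simplifies, in both parities of $n$, to $n^2-3n+1\geq -\tb(\Lambda_U)$, which by completing the square is equivalent to $n\geq\sqrt{-\tb(\Lambda_U)+5/4}+3/2$. Taking $n$ to be the smallest such integer gives (\ref{barn-zero}).

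For (\ref{barn-nonzero}), where $\rot(\Lambda_U)\neq 0$, I take $f=|\rot(\Lambda_U)|-1$, orienting the fish moves so their cusps share the orientation of the edge-soil cusps, and pick $j$ to be the largest odd integer not exceeding both $-\tb(\Lambda_U)-|\rot(\Lambda_U)|$ and the admissible maximum (namely $n-2$ for odd $n$, or $n-3$ for even $n$). Since every Legendrian unknot with nonzero rotation has $-\tb(\Lambda_U)-|\rot(\Lambda_U)|$ odd and positive, $k=(-\tb(\Lambda_U)-|\rot(\Lambda_U)|-j)/2$ is a nonnegative integer. Substituting into the constraint $2f+k\leq N$ and simplifying in both parities of $n$ reduces to $n^2-3n\geq 3|\rot(\Lambda_U)|-\tb(\Lambda_U)-5$, which rearranges to $n\geq\sqrt{3|\rot(\Lambda_U)|-\tb(\Lambda_U)-11/4}+3/2$. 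Choosing $n$ minimally gives (\ref{barn-nonzero}).

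The main subtlety will be the boundary of the mountain range, where $-\tb(\Lambda_U)-|\rot(\Lambda_U)|$ is small enough to force $j$ strictly below its admissible maximum; in that regime $k$ becomes correspondingly larger (often $0$), and one must verify that $2f+k\leq N$ still holds at the prescribed $n$. A case-by-case check, aided by the fact that $N$ grows quadratically in $n$ while the shortfall in $j$ is at most linear, handles this. A secondary bookkeeping point is that the chosen Kraken and fish moves must be simultaneously realizable as moves on distinct non-soil barn tiles, which is guaranteed by the ``stacking'' property of Observation~\ref{sufficient} (placing the moves row by row above and below the soil row).
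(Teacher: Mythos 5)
Your construction is the same one the paper uses---a soil setup followed by Kraken and fish moves, with the invariants and feasibility controlled by Lemma~\ref{barn-lemma} and Lemma~\ref{lem:enough-tiles}---and your $\rot(\Lambda_U)=0$ branch is correct and essentially identical to the paper's (your reduction of $k\leq N$ to $n^2-3n+1\geq-\tb(\Lambda_U)$ is exactly the paper's computation). In the $\rot(\Lambda_U)\neq 0$ branch, your adaptive choice of $j$ is in fact an improvement over the paper, which fixes $j$ at its maximum ($n-2$ or $n-3$, i.e.\ a soil setup with no $T_7$ tiles) and whose formula for $k$ then goes \emph{negative} on the outer boundary of the mountain range: for $(\tb,\rot)=(-4,3)$ the paper prescribes $n=5$ and $k=-\frac{1}{2}(-4+3+5-2)=-1$, which is not a number of moves, whereas your choice $j=1$, $k=0$, $f=2$ works since $2f+k=4\leq N=4$. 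So you correctly located the delicate regime that the paper's proof glosses over.

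However, the deferred ``case-by-case check'' in that regime is not merely tedious---it is false, and this is a genuine gap. Take $(\tb,\rot)=(-9,8)$, a legitimate Legendrian unknot since $\tb+|\rot|=-1$. The prescribed size is $n=\lceil\sqrt{24+9-\frac{11}{4}}+\frac{3}{2}\rceil=\lceil 5.5+1.5\rceil=7$. Fish moves are the only rotation-changing moves in this toolkit (Kraken moves preserve $\rot$, and the soil contributes at most $\pm1$), so $f=|\rot|-1=7$ is forced; but then $2f=14>N=\frac{(7-1)(7-3)}{2}=12$, no matter how $j$ and $k$ are chosen. Your quadratic-versus-linear heuristic misfires because on the outer boundary the binding term is $2f=2|\rot|-2$, which is itself quadratic in $n$ (minimality of $n$ forces $|\rot|\approx\frac{1}{4}(n-\frac{3}{2})^2+\frac{7}{16}$), and the deficit $2f+k-N$ grows like $\frac{n-3}{2}$; the same failure recurs at $(-12,11)$ with $n=8$ ($2f=20>18$), at $(-15,14)$ with $n=9$ ($2f=26>24$), and so on. At these infinitely many points no allocation within the soil/Kraken/fish framework realizes (\ref{barn-nonzero}) at the prescribed $n$---one needs either a new rotation-generating move that costs fewer barn tiles or a separate argument (the truth of the bound at $(-9,8)$ is so far only attested by the computer census, which finds $m=7$ there). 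Notably, the paper's own proof shares this blind spot: its algebraic verification of $2f+k\leq N$ succeeds at exactly these points only because it silently permits $k<0$, so your proposal has surfaced, but not closed, a real defect in the published argument.
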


\begin{proof}
    Let $r:=|\rot(\Lambda_U)|$.
    For $r\neq 0$, we will construct a Legendrian $n$-mosaic for $\Lambda_U$ with 
    \[n:=\left\lceil \sqrt{3r-\tb(\Lambda_U)-\frac{11}{4}} + \frac{3}{2} \right\rceil.\]
    To that end, let $M$ be the unique Legendrian $n$-mosaic having a soil setup with no $T_7$ tiles. In other words, all non-edge soil tiles of $M$ contain crossings, so
    \[
    |M|_{T_{10}} = \begin{cases}
        n-2,&2\nmid n\\
        n-3,&2\mid n.
    \end{cases}
    \]
    In particular, $|M|_{T_{10}}$ is odd.
    
    Now, set $f:=r-1$, and set
    \begin{equation} \label{k}
        k :=
    \begin{cases}
        -\frac{1}{2}(\tb(\Lambda_U)+r+n-2), & 2 \nmid n\\
        -\frac{1}{2}(\tb(\Lambda_U)+r+n-3), & 2 \mid n.
    \end{cases}
    \end{equation}
    Note that since $\tb(\Lambda_U)$ and $r$ have different parities, $\tb(\Lambda_U)+r$ is odd. Thus, in either case, $k\in\Z$. Furthermore,
    \[
     \tb(\Lambda_U) =
    \begin{cases}
        -2k-r-n+2, & 2 \nmid n\\
        -2k-r-n+3, & 2 \mid n.
    \end{cases}
    \]
    Now, note that the inequality $n\geq \sqrt{3r-\tb(\Lambda_U)-11/4}+3/2$ implies the inequality \[n^2-3n+\frac{9}{4}\geq 3r-\tb(\Lambda_U)-\frac{11}{4}.\] If $n$ is odd, algebraically manipulating this inequality yields
    \[
    \frac{(n-1)(n-3)}{2} \geq 2(r - 1) -\frac{1}{2}(\tb(\Lambda_U)+r+n-2) = 2f+k.
    \]
    If instead $n$ is even, algebraic manipulation  yields
    \[
    \frac{(n-2)^2}{2} \geq   2(r - 1) -\frac{1}{2}(\tb(\Lambda_U)+r+n-3) = 2f+k.
    \]
    It follows from Lemma \ref{lem:enough-tiles} that we can perform $k$ Kraken moves and $f$ fish moves on $M$.
    By Lemma \ref{barn-lemma}, the resulting mosaic depicts a stabilized Legendrian unknot $\Lambda_{U'}$.
    Figure \ref{fig:barn-proof-ex} gives an example of such a construction.

    \begin{figure}
        \centering
        \includegraphics[width=0.95\linewidth]{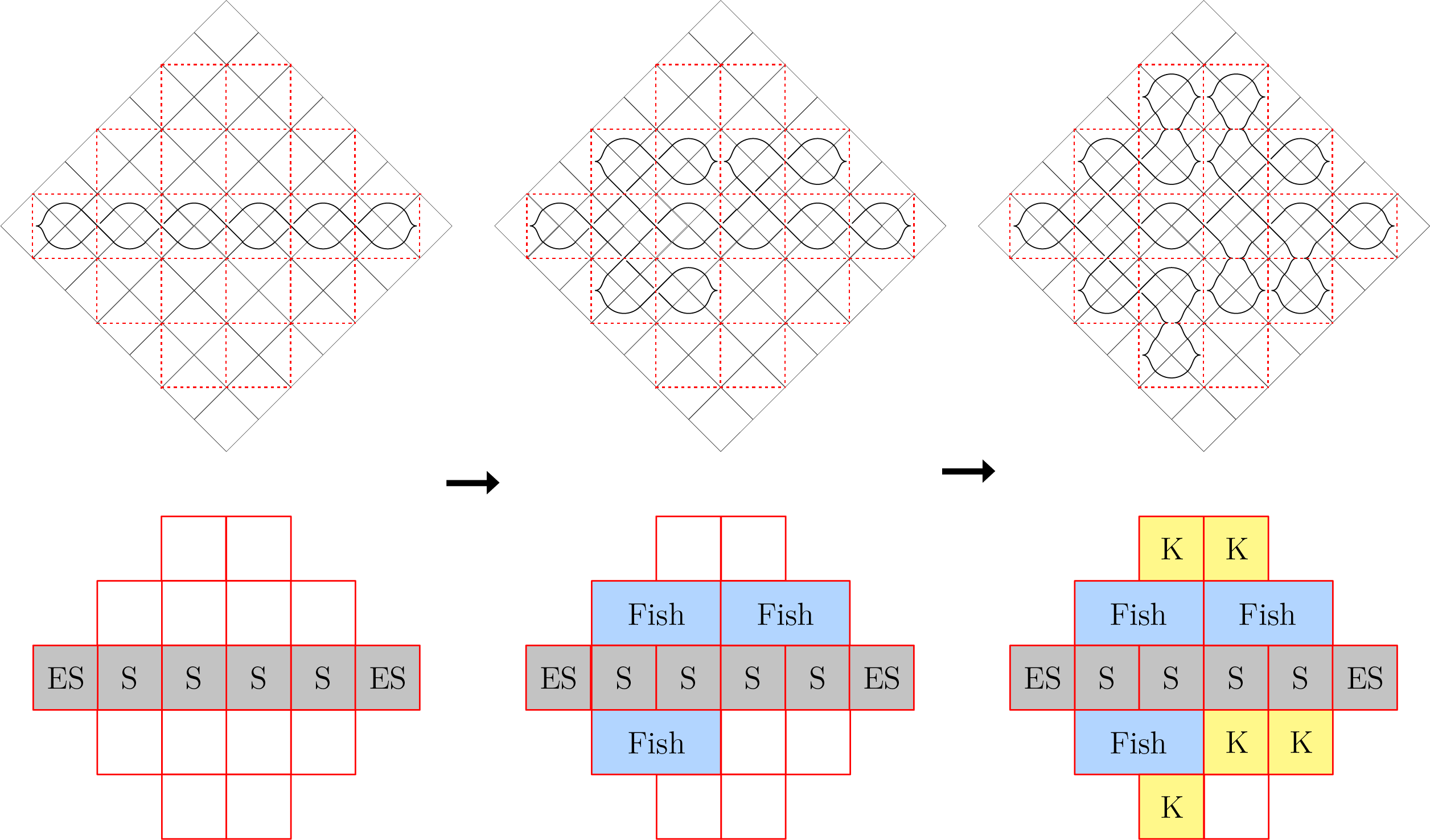}
        \caption{Using the algorithm from the proof of Theorem \ref{fish-bound} to construct a mosaic for the Legendrian unknot ${\Lambda_U}$ with $\tb({\Lambda_U})=-19$ and ${\rot({\Lambda_U})=4}$.}
        \label{fig:barn-proof-ex}
    \end{figure}
        
    We claim that $\Lambda_{U'}$ and $\Lambda_{U}$ are Legendrian isotopic. To show this, it will be enough to show that these unknots share the same Thurston-Bennequin invariant and rotation number (cf.\ \cite{eliashberg}).
    To that end, note by Lemma \ref{barn-lemma} that
    \[
    |\rot({\Lambda_{U'}})| = f+1 = r = |\rot({\Lambda_U})|,
    \]
    as desired. (This equality is sufficient since if $\rot({\Lambda_{U'}})=-\rot({\Lambda_U})$, then we can simply choose the opposite orientation of ${\Lambda_{U'}}$ to invert the sign of $\rot({\Lambda_{U'}})$.)
    Lemma \ref{barn-lemma} also implies that if $n$ is odd, then
    \begin{align*}
        \tb({\Lambda_{U'}})& -2k-f-(n-2)-1\\
        &= -2k-(r-1)-(n-2)-1\\
        &= -2k-r-n+2\\
        &=\tb({\Lambda_U}),
    \end{align*}
    as desired. If $n$ is even, a similar calculation shows that
    \[
    \tb({\Lambda_{U'}}) = -2k-r-n+3 =\tb({\Lambda_U}),
    \]
    so in either case, ${\Lambda_{U'}}$ and ${\Lambda_U}$ are Legendrian isotopic.

    If instead $r=0$, let 
    \[n:= \left\lceil \sqrt{-\tb({\Lambda_U})+\frac{5}{4}} + \frac{3}{2} \right\rceil,\]
    and use this value of $n$ to define $k$ exactly as we did in (\ref{k}).
    This time, let $M$ be a Legendrian $n$-mosaic having a soil setup with exactly one $T_7$ tile, so that
    \[
    |M|_{T_{10}}= \begin{cases}
        n-3,&2\nmid n\\
        n-4,&2\mid n.
    \end{cases}
    \]
    In particular, $|M|_{T_{10}}$ is even. It follows from Observation \ref{obs:default-soil} that the Legendrian unknot depicted in $M$ has rotation number 0.

    \begin{figure}
        \centering
        \includegraphics[width=0.75\linewidth]{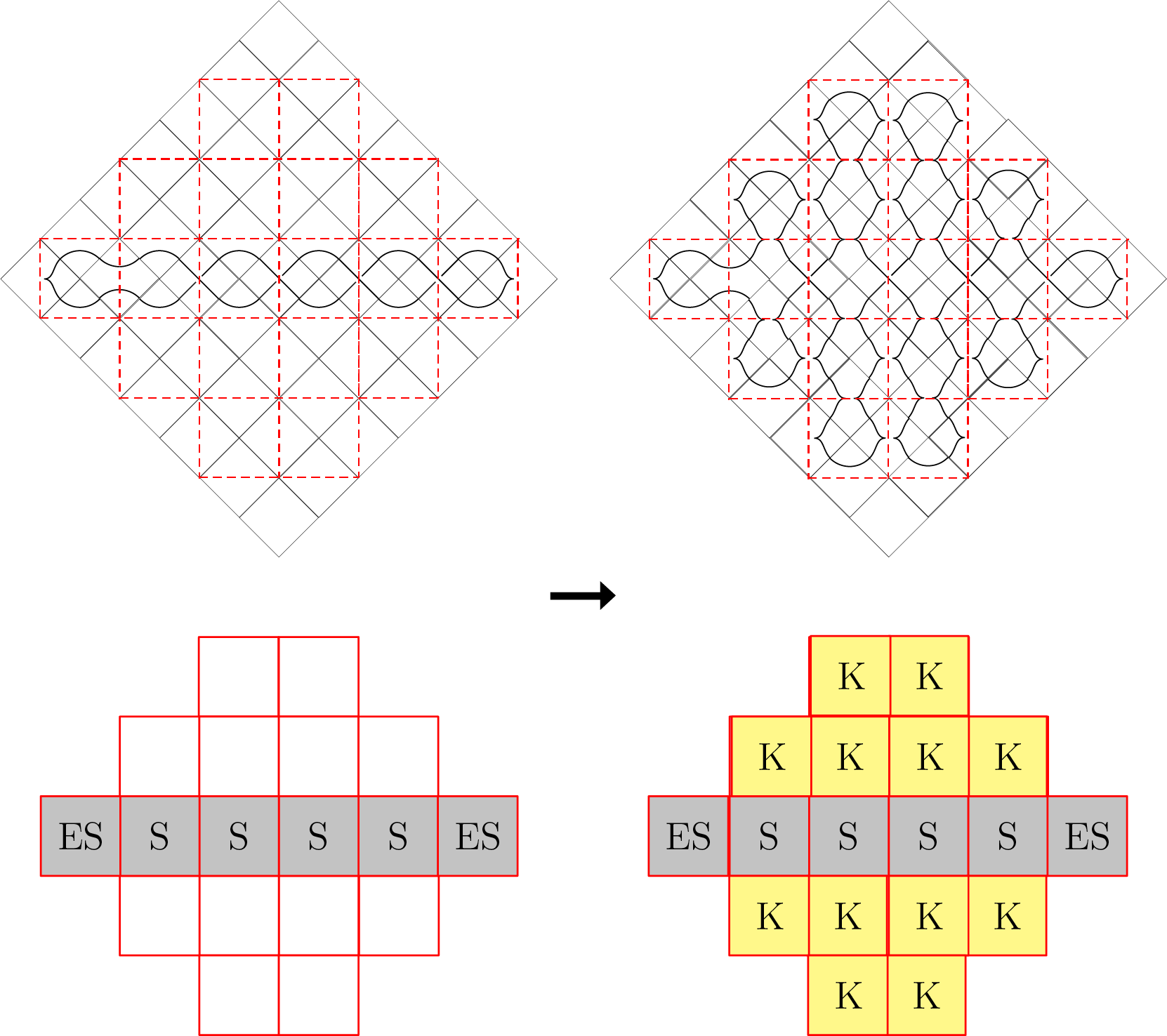}
        \caption{Using the algorithm from the proof of Theorem \ref{fish-bound} to construct a mosaic for the Legendrian unknot ${\Lambda_U}$ with $\tb({\Lambda_U})=-29$ and ${\rot({\Lambda_U})=0}$.}
        \label{fig:unknot29}
    \end{figure}
    
    Similarly to before, note that the inequality $n\geq \sqrt{-\tb({\Lambda_U})+5/4}+3/2$ implies the inequality \[n^2-3n+\frac{9}{4}\geq -\tb({\Lambda_U})+\frac{5}{4}.\] If $n$ is odd, algebraically manipulating this inequality yields
    \[
    \frac{(n-1)(n-3)}{2} \geq -\frac{1}{2}(\tb({\Lambda_U})+n-2) = k.
    \]
    If instead $n$ is even, algebraic manipulation yields
    \[
    \frac{(n-2)^2}{2} \geq  -\frac{1}{2}(\tb({\Lambda_U})+n-3) = k.
    \]
    It follows from Lemma \ref{lem:enough-tiles} that we can perform $k$ Kraken moves on $M$. (We will not perform any fish moves on $M$.) The resulting mosaic depicts a Legendrian unknot $\Lambda_{U'}$. Figure \ref{fig:unknot29} gives an example of such a construction.

    Once again, it will be enough to show that the classical invariants of $\Lambda_{U'}$ equal those of $\Lambda_{U}$.
    By Figure \ref{fig:barn-effects}, the fact that we did not perform any fish moves on $M$ implies that 
    \[
    \rot({\Lambda_U'})=0=\rot({\Lambda_U}),
    \]
    as desired.
    To compute the Thurston-Bennequin number, we once again appeal to Lemma \ref{barn-lemma}. If $n$ is odd, then
    \[
    \tb({\Lambda_{U'}}) = -2k-(n-3)-1 = -2k-r-n+2= \tb({\Lambda_U}),
    \]
    as desired. If $n$ is even, then a similar calculation yields
    \[
    \tb({\Lambda_{U'}}) = -2k-r-n+3 = \tb({\Lambda_U}),
    \]
    so in either case, ${\Lambda_U'}$ is Legendrian isotopic to ${\Lambda_U}$. This completes the proof.
\end{proof}

In particular, we have the following for Legendrian unknots on the ``outermost boundary" of their mountain range:

\begin{cor}\label{cor:fishbound}
    Let $\Lambda_U$ be a Legendrian unknot with $\rot(\Lambda_U)\neq 0$, and suppose $\tb(\Lambda_U)=-|\rot(\Lambda_U)|-1$ (i.e., $\rot(\Lambda_U)$ is either minimal or maximal among the rotation numbers of all other Legendrian unknots sharing the same Thurston-Bennequin invariant as $\Lambda_U$). Then
    \begin{equation} \label{improved-fish-ladder}
        m(\Lambda_U)\leq \left\lceil \sqrt{-4\tb(\Lambda_U)-\frac{23}{4}} + \frac{3}{2} \right\rceil = \left\lceil \sqrt{4|\rot(\Lambda_U)|-\frac{7}{4}} + \frac{3}{2} \right\rceil.
    \end{equation}
\end{cor}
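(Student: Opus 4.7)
The plan is to observe that Corollary \ref{cor:fishbound} follows immediately from Theorem \ref{fish-bound} by substituting the extremal boundary condition $\tb(\Lambda_U) = -|\rot(\Lambda_U)|-1$ into the general upper bound (\ref{barn-nonzero}). No new construction is needed; the work is purely an algebraic simplification.

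First I would recall that by Theorem \ref{fish-bound}, since $\rot(\Lambda_U)\neq 0$, we have
\[
m(\Lambda_U)\leq \left\lceil \sqrt{3|\rot(\Lambda_U)|-\tb(\Lambda_U)-\tfrac{11}{4}} + \tfrac{3}{2}\right\rceil.
\]
Setting $r:=|\rot(\Lambda_U)|$ and substituting $\tb(\Lambda_U)=-r-1$ into the radicand gives
\[
3r - (-r-1) - \tfrac{11}{4} = 4r + 1 - \tfrac{11}{4} = 4r - \tfrac{7}{4},
\]
which yields the right-hand expression in (\ref{improved-fish-ladder}). To see the equality of the two radicands, I would compute $-4\tb(\Lambda_U) - \tfrac{23}{4} = -4(-r-1) - \tfrac{23}{4} = 4r + 4 - \tfrac{23}{4} = 4r - \tfrac{7}{4}$, confirming that both expressions under the radicals agree.

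There is no real obstacle here, since the corollary is a direct specialization of Theorem \ref{fish-bound}. The only minor point to verify is that the radicand $4r - \tfrac{7}{4}$ is nonnegative, but this holds because $r\geq 1$ whenever $\rot(\Lambda_U)\neq 0$ (as $\rot$ is integer-valued for knots), giving $4r - \tfrac{7}{4}\geq \tfrac{9}{4}>0$. The proof therefore reduces to recording this substitution.
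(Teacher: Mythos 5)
Your proposal is correct and matches the paper's intent exactly: the paper states Corollary \ref{cor:fishbound} as an immediate consequence of Theorem \ref{fish-bound}, and your substitution of $\tb(\Lambda_U)=-|\rot(\Lambda_U)|-1$ into the radicand of (\ref{barn-nonzero}), yielding $3r+r+1-\tfrac{11}{4}=4r-\tfrac{7}{4}$ and the matching computation $-4\tb(\Lambda_U)-\tfrac{23}{4}=4r-\tfrac{7}{4}$, is precisely the algebra the paper leaves implicit. Your extra check that the radicand is positive (via $r\geq 1$) is consistent with the paper's remark preceding Theorem \ref{fish-bound} that the expression under the radical in (\ref{improved-fish-ladder}) is always positive.
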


\section{The Number of Legendrian Link Mosaics} \label{sec:counting}
In Theorem 1 of \cite{mosaic-count}, Oh, Hong, Lee, and Lee give an algorithm to count the number, which they call $D^{(m,n)}$, of $m\times n$ classical link mosaics. In this section, we adapt their result to the Legendrian setting. Let $\mathbb{L}^{(m,n)}$ denote the set of $m\times n$ Legendrian link mosaics (i.e., suitably connected mosaics using only the mosaic tiles in Figure \ref{fig:Legtiles}), and let $D_L^{(m,n)}$ denote the total number of elements in $\mathbb{L}^{(m,n)}$. 
We will prove the following algorithm to compute $D_L^{(m,n)}$ for any $m,n\in\Z^+$. Here, $||M||$ denotes the sum of all entries of a matrix $M$.

\begin{theorem} \label{thm:counting}
    Let $m,n\in\Z^+$. If $m=1$ or $n=1$, then the total number $D_L^{(m,n)}$ of all $m\times n $ Legendrian link mosaics is 1. Otherwise,
    \[
    D_L^{(m,n)} = 2||(X_{m-2}+O_{m-2})^{n-2}||,
    \]
    where $X_{m-2}$ and $O_{m-2}$ are $2^{m-2}\times 2^{m-2}$ matrices defined recursively by
    \[
    X_{k+1}:=\begin{bmatrix}
        X_k & O_k \\ O_k & X_k
    \end{bmatrix} \text{ and }\; O_{k+1}:=\begin{bmatrix}
        O_k & X_k \\ X_k & 3O_k
    \end{bmatrix}
    \]
    for $k=0,1,\dots,m-3$, with $1\times 1$ matrices $X_0,O_0:=\begin{bmatrix}
        1
    \end{bmatrix}$.
    For $n=2$, by $(X_{m-2}+O_{m-2})^{0}$ we mean the $2^{m-2}\times 2^{m-2}$ identity matrix. 
\end{theorem}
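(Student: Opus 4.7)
The plan is to adapt the transfer-matrix proof of Theorem~1 in \cite{mosaic-count} to the Legendrian tile set. The base cases $m=1$ and $n=1$ are immediate: in a single-row (respectively single-column) mosaic every tile has both of its boundary-adjacent opposite sides forced to be inactive, and a tile-by-tile inspection of Figure~\ref{fig:Legtiles} shows that only $T_0$ meets this requirement, so the mosaic is unique and $D_L^{(1,n)}=D_L^{(m,1)}=1$.

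For the general case $m,n\geq 2$, I would process the mosaic one column at a time from left to right, encoding by a binary state vector of length $m-2$ the pattern of horizontal bonds crossing each inter-column interface at the $m-2$ interior rows (the top and bottom row contributions being absorbed into the overall prefactor $2$, as in the classical argument). To build the transfer matrix for a single column, I would process its $m$ tiles one at a time from top to bottom, tracking as auxiliary data the hanging vertical bond left by the most recently placed tile. Indexing the resulting DP by the number $k$ of rows already processed would then produce the pair of matrices $X_k$ and $O_k$, with $X_k$ collecting the partial configurations whose hanging vertical bond is inactive and $O_k$ collecting those in which it is active. The four $2\times 2$ blocks in
\[
X_{k+1}=\begin{bmatrix}X_k & O_k\\ O_k & X_k\end{bmatrix},\qquad
O_{k+1}=\begin{bmatrix}O_k & X_k\\ X_k & 3\,O_k\end{bmatrix}
\]
should then correspond one-to-one with the four choices of (incoming left bond, outgoing right bond) for the newly inserted tile, together with the parity change induced on the hanging vertical bond, so the recursion follows by a direct case analysis on Figure~\ref{fig:Legtiles}.

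The only place where the Legendrian case genuinely departs from the classical one is the block weighted $3\,O_k$ in the recursion for $O_{k+1}$: this block enumerates tiles with all four sides active, for which the classical setting has four choices $\{T_5,T_6,T_9,T_{10}\}$ while the Legendrian setting has only three $\{T_5,T_6,T_{10}\}$, because $T_9$ is absent. Once the identification of the one-column transfer matrix with $X_{m-2}+O_{m-2}$ is verified, the formula $D_L^{(m,n)}=2\,\|(X_{m-2}+O_{m-2})^{n-2}\|$ will follow by multiplying through the $n-2$ interior columns and summing (via $\|\cdot\|$) over all compatible initial and final column-interface states, the prefactor $2$ coming from the two possible closures at the leftmost and rightmost boundary columns exactly as in \cite{mosaic-count}. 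The main obstacle I foresee is the bookkeeping in the inductive step: one must carefully match each of the four blocks in the recursion with a precise combinatorial case of inserting a Legendrian tile and verify that no allowed classical configuration becomes forbidden beyond the removal of $T_9$, so that every occurrence of the factor $4$ in the classical proof is replaced by exactly one factor of $3$.
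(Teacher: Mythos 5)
Your proposal is correct and takes essentially the same route as the paper, which (after noting that the $45^\circ$ rotation gives a bijection between Legendrian mosaics and classical mosaics containing no $T_9$ tiles) defers to the transfer-matrix proof of Theorem 1 in \cite{mosaic-count} and records exactly the change you identify: the family of tiles with all four sides active shrinks from four to three, turning the block $4O_k$ into $3O_k$. One labeling slip to fix before executing your case analysis: in this numbering the four-connection-point tiles are the double arcs and crossings $T_7,T_8,T_9,T_{10}$, not $\{T_5,T_6,T_9,T_{10}\}$ --- $T_5$ and $T_6$ are the line tiles with only two connection points (which is also why your base case needs the ends of the row, not just a tile-by-tile inspection, to rule out a row of line tiles) --- so the Legendrian set is $\{T_7,T_8,T_{10}\}$; since the recursion uses only the count of three, the formula is unaffected.
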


\begin{proof}
    If $m=1$ or $n=1$, then $D_L^{(m,n)}=1$ since the only way for a single row or column of Legendrian mosaic tiles to be suitably connected is for all of its tiles to be $T_0$. So, suppose $m,n\geq 2$. Let $\mathbb{K}_{T_9}^{(m,n)}$ be the set of $m\times n$ classical link mosaics that do not contain any $T_9$ tiles, and let $D^{(m,n)}_{T_9}$ be the total number of elements of $\mathbb{K}_{T_9}^{(m,n)}$.
    We claim that $D_L^{(m,n)}=D^{(m,n)}_{T_9}$.
    Indeed, by the construction of the Legendrian mosaic tiles in \cite{pezzimenti}, rotating counterclockwise by $45^\circ$ and replacing vertical tangencies with cusps gives a bijection from $\mathbb{K}_{T_9}^{(m,n)}$ to $\mathbb{L}^{(m,n)}$.  
    
    So, it suffices to compute $D^{(m,n)}_{T_9}$. We first refer the reader to the proof of Theorem 1 in \cite{mosaic-count} that computes $D^{(m,n)}$.
    In that proof, the authors assign \emph{states} to each suitably connected classical $m\times n$ mosaic that record the number and locations of connection points on the edges of each mosaic tile. Their algorithm for computing $D^{(m,n)}$ relies on recurrence relations of \emph{state matrices}, which are block matrices whose entries count the number of suitably connected mosaics of a certain size and with a given state. Due to this proof's length and complexity, we will not reproduce every detail here.
    
    Instead, we adjust the proof slightly by removing $T_9$ from the set of allowable tiles. In particular, $T_7$, $T_8$, and $T_{10}$ are the only usable tiles with at least three connection points. Therefore, the ``four" in the statement of the \emph{choice rule} stated in Section 2 of \cite{mosaic-count} changes to ``three." Similarly, these three tiles are now the only tiles counted by the $(2,2)$-entry of the state matrix $O_1$ defined in this section. Thus, the state matrices defined in this section are now
    \[
    X_{1}=\begin{bmatrix}
        1 & 1 \\ 1 & 1
    \end{bmatrix},\; O_{1}=\begin{bmatrix}
        1 & 1 \\ 1 & 3
    \end{bmatrix},\;  \text{ and }\; N^{(1,1)}=X_1 + O_1=\begin{bmatrix}
        2 & 2 \\ 2 & 4 \end{bmatrix}.
    \]
    In particular, the bottom-right submatrix of $O_{k+1}$ in the inductive step of Proposition 2 in \cite{mosaic-count} becomes $3O_k$ rather than $4O_k$. Since these are the only points in the proof where $T_9$ tiles have any bearing, the rest of the  proof in \cite{mosaic-count} shows that $D^{(m,n)}_{T_9}$ is given by the formula in the statement of Theorem \ref{thm:counting}.
\end{proof}

\begin{figure}
    \centering
    \begin{tikzpicture}
        \begin{axis}[
          xlabel=$n$,
          ylabel=$\ln D_L^{(n,n)}$]
        \addplot table [y=a, x=n]{Appendices/mosaics.dat};
        \node[] at (axis cs: 3.5,90) {$\ln D_L^{(n,n)}\approx 1.0745n^2$};
        \node[] at (axis cs: 6.5,80) {$-3.1057n+2.3933$};
        \node[] at (axis cs: 3.25,72) {$R^2=1$};
        \end{axis}
    \end{tikzpicture}
    \caption{Quadratic exponential growth of the number $D_L^{(n,n)}$ of suitably connected Legendrian $n$-mosaics.}
    \label{app-fig:b1}
\end{figure}
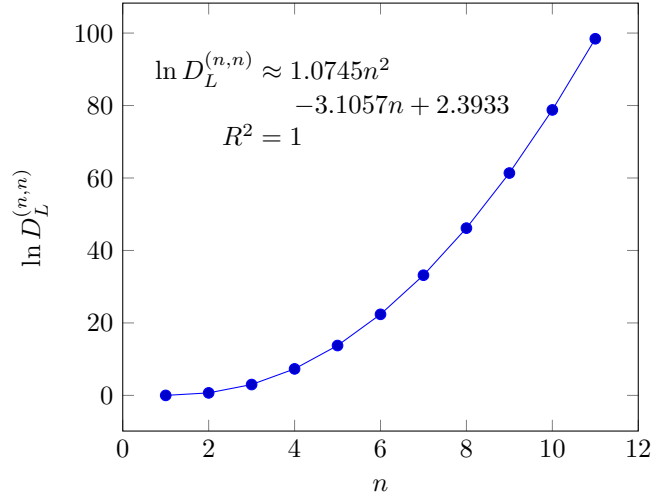

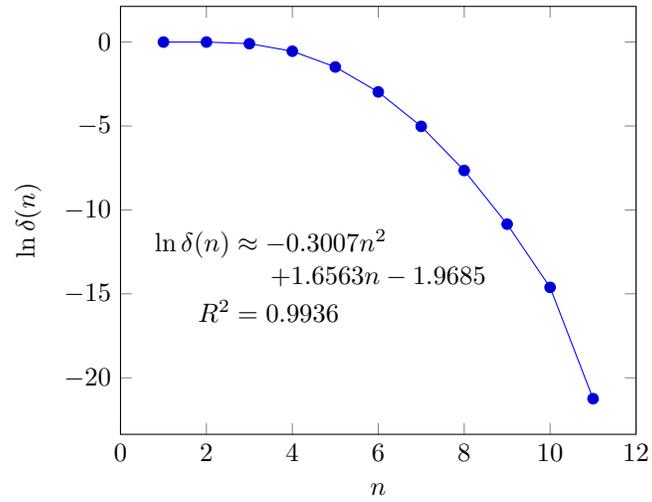
\begin{figure}
    \centering
    \begin{tikzpicture}
        \begin{axis}[
          xlabel=$n$,
          ylabel=$\ln\delta(n)$]
        \addplot table [y=b, x=n]{Appendices/ratios.dat};
        \node[] at (axis cs: 3.5,-12) {$\ln\delta(n)\approx -0.3007n^2$};
        \node[] at (axis cs: 6,-14) {$+1.6563n-1.9685$};
        \node[] at (axis cs: 3.45,-16) {$R^2=0.9936$};
        \end{axis}
    \end{tikzpicture}
    \caption{Negative quadratic exponential growth of the ratios $\delta(n)$ between the number $D_L^{(n,n)}$ of suitably connected Legendrian $n$-mosaics and the number $D^{(n,n)}$ of suitably connected classical $n$-mosaics.}
    \label{app-fig:b2}
\end{figure}

In particular, $D_L^{(m,2)}=D^{(m,2)}$ for all $m\in\Z^+$ since $T_9$ tiles do not appear in classical link mosaics with only two rows or columns. As noted in \cite{mosaic-count}, this quantity is precisely $2^{m-1}$.

By implementing the algorithm of Theorem \ref{thm:counting} in Mathematica, we were able to compute $D_L^{(m,n)}$ for all $1\leq n\leq m\leq 11$. We present our code and data in Appendix \ref{app:counting}.

In \cite{mosaic-count}, the authors computed $D^{(n,n)}$ for all $n\leq 13$. This allows us to consider the ratios
\[
\delta(n):= \frac{D_L^{(n,n)}}{D^{(n,n)}}
\]
for all $n\leq 11$. Similarly to $D^{(n,n)}$, it appears that $D_L^{(n,n)}$ grows at a quadratic exponential rate. Moreover, $\delta(n)$ seems to monotonically converge to 0 at a quadratic exponential rate. We provide graphs and best-fit models of $\ln D_L^{(n,n)}$ and $\ln \delta(n)$ in Figures \ref{app-fig:b1} and \ref{app-fig:b2}, respectively.

\section{Updated Censuses} \label{algos}

In \cite{pezzimenti}, Pezzimenti and Pandey give a preliminary census containing computed mosaic numbers of certain Legendrian knots, organized in their mountain ranges, as introduced in Section \ref{sec:basics}. Using an exhaustive computer search, we provide an updated census, which covers a much larger class of Legendrian knots. The updated mountain ranges are give in Appendix \ref{census-list}.

While the exact mosaic number of a Legendrian knot is generally hard to determine, precise bounds can be obtained via an exhaustive search. Suppose $L_n$ is the set containing every Legendrian knot realizable on a Legendrian $n$-mosaic. If $\Lambda \in L_n$, then $m(\Lambda) \leq n$, and, likewise, if $\Lambda \notin L_n$, then $m(\Lambda) > n$. These lists would be impractical to produce by hand, but the discrete nature of this problem makes it a natural fit for a computer search. In this section, we detail such a computer search and summarize its results, using the mountain ranges provided in \cite{atlas} as a base for our censuses.

\subsection{Computer Search}\label{sec:search_details}

Our search was conducted using two programs: \texttt{mosaic-gen.rs}, a \texttt{rust} program that produces a list of all suitably connected Legendrian $n$-mosaics for a given $n$, and \texttt{mosaic-cat.py}, a \texttt{python} program that catalogs those mosaics by $\tb$, $\rot$, and smooth knot type. Full implementations and documentation for these programs can be found in \cite{code}. 

When working digitally, it is convenient to represent mosaics as integers in base-10, obtained by reading a mosaic's tile numbers from left to right and top to bottom, starting from the left corner as shown in in Figure \ref{fig:number_representation}. Here, we use 9 instead of 10 to represent the crossing tile $T_{10}$ for ease of notation, as $T_9$ is unused in Legendrian mosaics.
\begin{figure}[H]
    \centering
    \begin{equation*}
        \vcenter{\hbox{\includegraphics[scale=0.12]{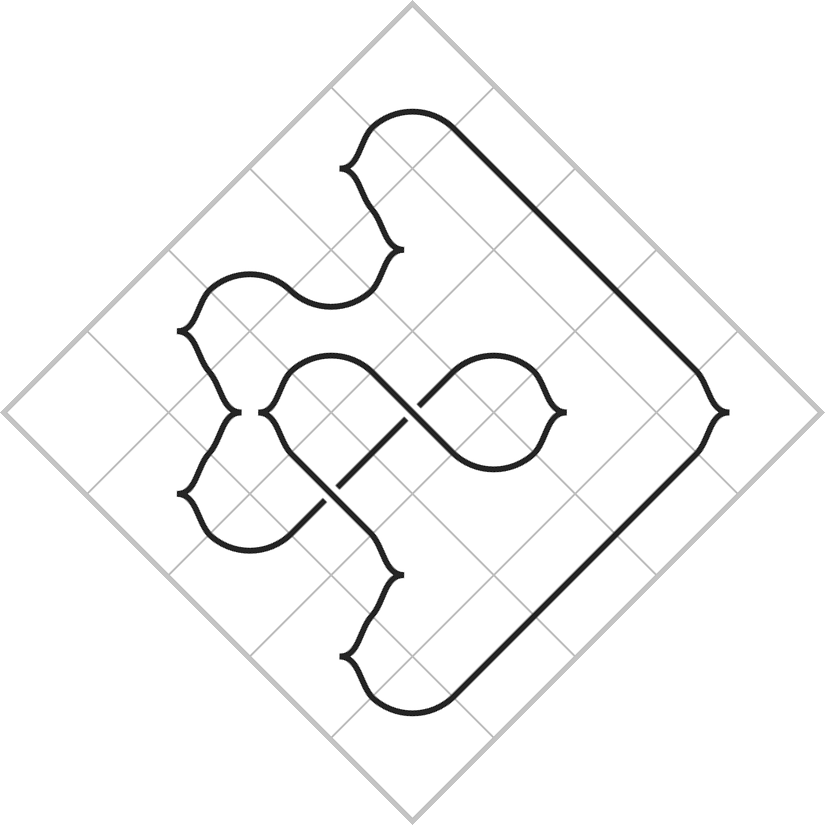}}} \Leftrightarrow 
        \begin{bmatrix} 
        0 & 2 & 1 & 2 & 1 \\
        2 & 8 & 7 & 4 & 6 \\
        3 & 10 & 10 & 1 & 6 \\
        2 & 4 & 3 & 4 & 6 \\
        3 & 5 & 5 & 5 & 4
        \end{bmatrix} 
        \Leftrightarrow 0212128746399162434635554
    \end{equation*}
    \caption{Representing a Legendrian mosaic as an integer.}
    \label{fig:number_representation}
\end{figure}
Using this representation, a list of all suitably connected $n$-mosaics can be obtained by simply iterating through all positive integers in base-10 of maximum length $n^2$, and noting those mosaics that are suitably connected. This process is handled by \texttt{mosiac-gen.rs}, which produces lists matching the numbers of Legendrian link mosaics presented in Section \ref{sec:counting} and Appendix \ref{app:counting}. 

From this list, \texttt{mosaic-cat.py} determines which mosaics correspond to knots (rather than links). If so, the classical invariants of the associated knot are noted, and the knot is encoded as an extended Gauss code to be used with \texttt{SageMath}'s \texttt{Links} package \cite{sagemath}. This package is used to calculate the HOMFLY-PT polynomial of the resulting knots, which, for knots up to 8 crossings, uniquely determines the smooth knot type \cite{homfly}. This was a valid assumption for the mosaic sizes that we cataloged. Our mosaics are then organized into a list according to their smooth knot type, Thurston-Bennequin number, and rotation number.

\subsection{Results}\label{sec:census_results}
Using the programs described in Section \ref{sec:search_details}, we produced lists of all Legendrian knots, distinct up to smooth knot type and classical invariants, realizable on mosaics through size 6. The results from these lists regarding the knots (both smooth and Legendrian) realizable on a given mosaic size are summarized in Tables \ref{tab:bounds_by_size}, \ref{tab:knots_by_size}, and \ref{tab:mosaics_by_size}.

\begin{table}
    \centering
    \begin{tabular}{|c|c|c|c|}
    \hline
    Mosaic Size & Maximum $\tb$ & Minimum $\tb$ & Maximum $|\rot|$ \\
    \hline
    $n=2$ & -1 & -1 & 0 \\
    $n=3$ & -1 & -3 & 2 \\
    $n=4$ & -1 & -7 & 2 \\
    $n=5$ & 1 & -12 & 3 \\
    $n=6$ & 3 & -21 & 6 \\
    \hline
    \end{tabular}
    \caption{Computationally verified bounds on $\tb$ and $\rot$.}
    \label{tab:bounds_by_size}
\end{table}

\begin{table}
    \centering
    \begin{tabular}{|c|p{8cm}|}
    \hline
    Mosaic Size & Realizable Smooth Knots \\
    \hline
    $n=2$ & $0_1$ \\
    $n=3$ & $0_1$ \\
    $n=4$ & $0_1$ \\
    $n=5$ & $0_1, 3_1, m(3_1)$ \\
    $n=6$ & $0_1, 3_1, m(3_1), 3_1\#3_1, m(3_1)\#m(3_1), 4_1, 5_1, m(5_1), 5_2,$ $m(5_2), 6_1, m(6_1), 7_1, m(7_1), 7_2, m(7_2), 8_1, m(8_1)$ \\
    \hline
    \end{tabular}
    \caption{Smooth knot types realizable on a Legendrian $n$-mosaic.}
    \label{tab:knots_by_size}
\end{table}

\begin{table}
    \centering
    \begin{tabular}{|c|c|c|}
    \hline
    Mosaic Size & Mosaics corresponding to knots & Distinct Legendrian knots\\
    \hline
    $n=2$ & 1 & 1 \\
    $n=3$ & 17 & 4 \\
    $n=4$ & 793 & 9 \\
    $n=5$ & 275557 & 40 \\
    $n=6$ & 831699599 & 328 \\
    \hline
    \end{tabular}
    \caption{Number of distinct suitably connected square Legendrian $n$-mosaics corresponding to knots (rather than links), and number of distinct Legendrian knots within these mosaics.}
    \label{tab:mosaics_by_size}
\end{table}

The bounds from these lists were also used to create updated censuses for every smooth knot type found, with one caveat. While the lower bound indicated in a census applies to every Legendrian representative with the given $\tb$ and $\rot$, upper bounds only indicate that at least one Legendrian representative with those classical invariants is realizable on a mosaic of that size. For certain smooth knot types, for instance $5_1$, multiple non-isotopic Legendrian representatives may have the same classical invariants, but the upper bound may not apply to every representation. More information on these smooth knot types can be found in \cite{atlas}.

The full updated censuses are available in Appendix \ref{census-list}, and minimal mosaics for the knots presented in these censuses can be found in Appendix \ref{min-mosaics}. 

The lower bounds provided by this search answered many of the questions presented by Pezzimenti and Pandey in \cite{pezzimenti} and reintroduced in Section \ref{sec:initial_questions}, particularly those regarding how stabilization affects mosaic number:

\begin{obs}\label{obs:many_stabilizations}
    There exist (several) Legendrian knots for which stabilization reduces mosaic number.
\end{obs}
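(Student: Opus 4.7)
The plan is to verify this observation by exhibiting explicit Legendrian knots from the computer census for which stabilization strictly decreases the mosaic number, then to justify both the upper and lower bounds on mosaic number that these examples require. Since stabilization can act on either the Thurston-Bennequin or the rotation side of a mountain range, I would look in the updated census (Appendix \ref{census-list}) for smooth knot types where there exist two Legendrian representatives $\Lambda$ and $\Lambda'$ with $\Lambda'$ a stabilization of $\Lambda$ (so $\tb(\Lambda')=\tb(\Lambda)-1$ and $\rot(\Lambda') = \rot(\Lambda)\pm 1$) but $m(\Lambda') < m(\Lambda)$. Promising candidates include the Legendrian negative trefoil and the small torus and twist knots already enumerated, where a maximal-$\tb$ representative can fail to realize the smooth knot's Legendrian mosaic number, forcing its mosaic number to exceed that of one of its stabilizations.

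First, I would pin down one such pair $(\Lambda,\Lambda')$ from the tables and reproduce an explicit minimal mosaic for $\Lambda'$ of size $n' = m(\Lambda')$, drawn from Appendix \ref{min-mosaics}. This single diagram gives the upper bound $m(\Lambda') \leq n'$, and computing its $\tb$ and $\rot$ from the formulas $\tb=P-N-C/2$ and $\rot=(D-U)/2$ confirms the classical invariants match those of the stabilization. Next, I would invoke the lower bound $m(\Lambda) \geq n'+1$, which is established by the exhaustive search described in Section \ref{algos}: the programs \texttt{mosaic-gen.rs} and \texttt{mosaic-cat.py} generate \emph{every} suitably connected Legendrian $n'$-mosaic for each $n'\leq 6$, classify the resulting knots by HOMFLY-PT and classical invariants, and record no instance of $\Lambda$ appearing on a mosaic of size $n'$.

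To make the observation more compelling, I would list several such stabilized/unstabilized pairs drawn from distinct smooth knot types, highlighting in particular any smooth knot type whose Legendrian mosaic number $m_L(K)$ is realized only by a stabilized representative. These cases directly answer the open questions from \cite{pezzimenti} reintroduced in Section \ref{sec:initial_questions}.

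The main obstacle is epistemic rather than mathematical: the lower bounds rely entirely on the correctness and completeness of the exhaustive search, including the facts that the enumeration of $n$-mosaics is truly exhaustive (verified by matching the counts $D_L^{(n,n)}$ computed independently in Section \ref{sec:counting}) and that HOMFLY-PT distinguishes smooth knot types up to 8 crossings (guaranteed by \cite{homfly}). Beyond documenting these checks, the actual proof reduces to pointing at specific entries of the census and a single diagram, so I would keep the written proof brief and refer the reader to the appendices for the concrete data.
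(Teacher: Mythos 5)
Your proposal matches the paper's approach exactly: the observation is established by pointing at explicit census pairs, with upper bounds from minimal mosaics (Appendix \ref{min-mosaics}) and lower bounds from the exhaustive enumeration of Section \ref{algos}, cross-validated against the counts of Section \ref{sec:counting}. The paper's featured example is the unknot pair with $(\tb,\rot)=(-4,3)$ stabilizing to $(-5,2)$, reducing mosaic number from 5 to 4 (Figure \ref{fig:unknot_stabilization}), alongside the yellow-highlighted vertices in the mountain ranges, which is precisely the kind of exhibit you describe.
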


 Examples of occurrences of Legendrian knots featuring this property are highlighted in yellow in the mountain ranges of Appendix \ref{census-list}.
One of the simplest examples of a mosaic-number-reducing stabilization, and the only example of a knot with mosaic number 5 being stabilized to a knot with mosaic number 4, can be found in the census for the unknot (Figure \ref{fig:new-unknots}) and is depicted in Figure \ref{fig:unknot_stabilization}.

\begin{figure}
    \begin{equation*}
        \vcenter{\hbox{\includegraphics[scale=0.168]{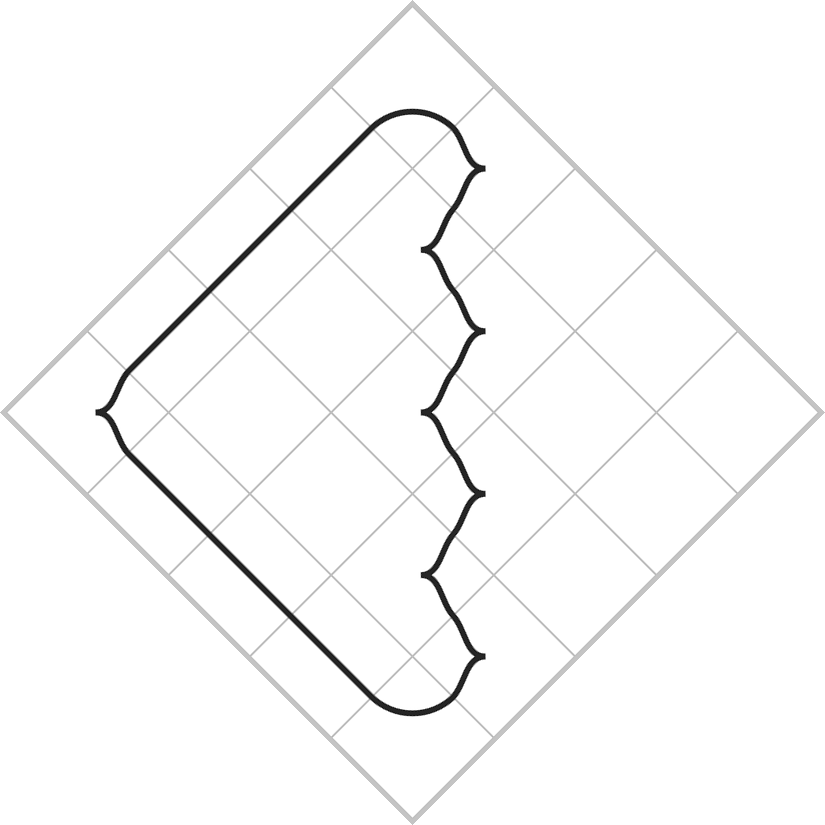}}}
        ~\xrightarrow{~S_-~}~        
        \vcenter{\hbox{\includegraphics[scale=0.21]{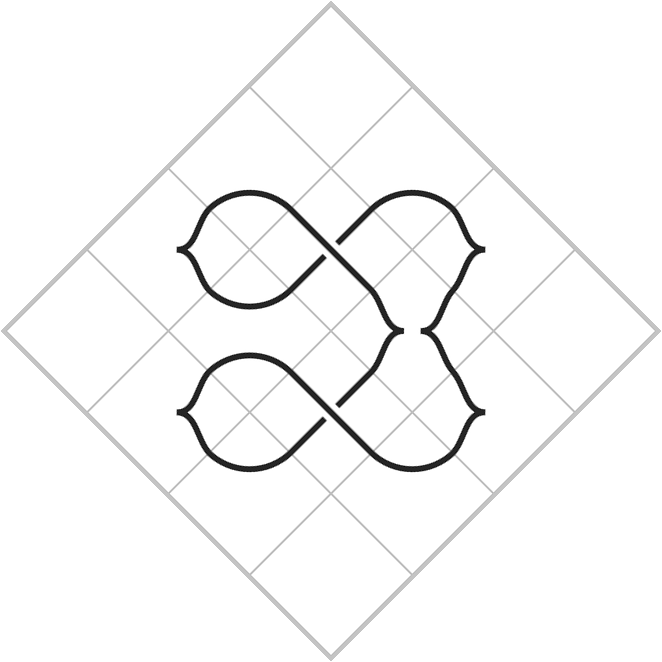}}}
    \end{equation*}
    \caption{Stabilization reducing mosaic number in the unknot.}
    \label{fig:unknot_stabilization}
\end{figure}

The crab buckets introduced in Section \ref{combo_bounds} are inspired by an unusual example from the negative trefoil census (Figure \ref{fig:trefoils}). While most stabilizations reduce the magnitude of the a knot's rotation number and occur towards the edges of the census, the crab bucket $\beta_5$ is the result of a stabilization that increases the magnitude of the rotation number, depicted in Figure \ref{fig:crab_stabilization}. In Figure \ref{fig:trefoils}, $\beta_5$ is the knot with $\tb(\beta_5)=-12$ and $\rot(\beta_5) = 1$, near the middle of the census. 

\begin{figure}
    \begin{equation*}
        \vcenter{\hbox{\includegraphics[scale=0.14]{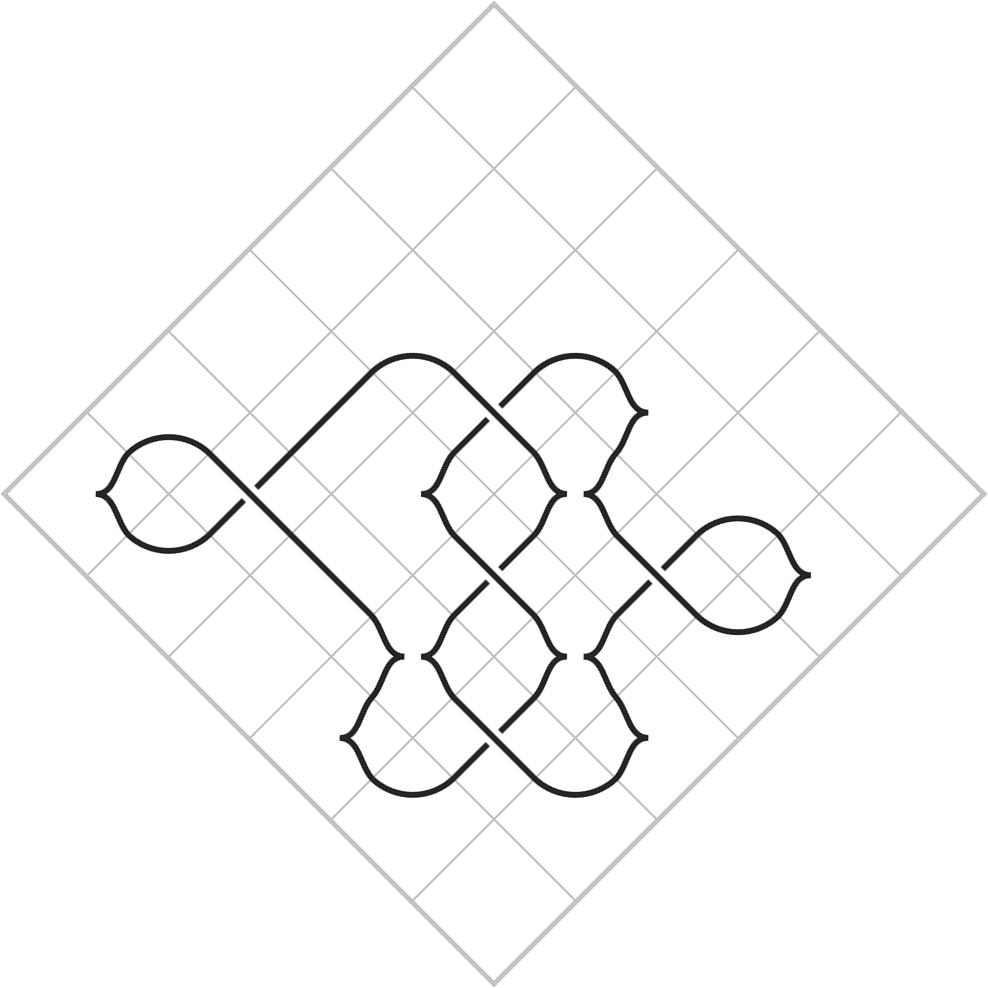}}}
        ~\xrightarrow{~S_+~}~        
        \vcenter{\hbox{\includegraphics[scale=0.168]{crab5.png}}}
    \end{equation*}
    \caption{Stabilization producing $\beta_5$.}
    \label{fig:crab_stabilization}
\end{figure}
    
\begin{obs}\label{obs:non_maximal_tb}
    There exists a smooth knot $K$ for which $m_L(K)$ is only realized by a Legendrian representative with non-maximal Thurston-Bennequin number, namely $8_1$. 
\end{obs}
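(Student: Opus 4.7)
The plan is to verify Observation \ref{obs:non_maximal_tb} by direct inspection of the computer-generated census produced by the exhaustive search of Section \ref{algos}. The observation is a concrete assertion about a single knot type, so the proof is finite and relies on the completeness of the enumeration rather than on any structural argument.

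First, I would consult the Legendrian Knot Atlas \cite{atlas} to record $\tb_{\max}(8_1)$ and to identify the rotation numbers attained by Legendrian representatives of $8_1$ realizing this maximum. Second, I would consult the updated mountain range for $8_1$ appearing in Appendix \ref{census-list}: by the search described in Section \ref{sec:search_details}, every Legendrian representative of $8_1$ realizable on a Legendrian $n$-mosaic for $n\leq 6$ is accounted for and labeled by its classical invariants. Reading off the entries gives the precise value of $m_L(8_1)$ together with the set of pairs $(\tb,\rot)$ attained on mosaics of that minimal size.

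The verification then consists of two bookkeeping steps. Step (i): exhibit in the census at least one entry for $8_1$ with $\tb<\tb_{\max}(8_1)$ that is realized on a mosaic of size $m_L(8_1)$. Since any such representative is a stabilization of a Legendrian representative of maximum Thurston-Bennequin number, this produces the required non-maximal witness. Step (ii): confirm that none of the entries for $8_1$ with $\tb=\tb_{\max}(8_1)$ appear on a mosaic of size $m_L(8_1)$; that is, every maximal-\tb{} representative of $8_1$ requires a strictly larger mosaic. Together, (i) and (ii) yield the observation.

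The main obstacle is not mathematical but computational: one must trust the exhaustiveness and correctness of the search. Two potential pitfalls should be addressed. First, the smooth knot type is identified via the HOMFLY-PT polynomial, which, as noted in Section \ref{sec:search_details}, is a complete invariant for knots with at most $8$ crossings; since $8_1$ lies within this range, the classification of mosaics as instances of $8_1$ is unambiguous. Second, the number of suitably connected Legendrian $n$-mosaics enumerated by \texttt{mosaic-gen.rs} for $n\leq 6$ matches the independent count $D_L^{(n,n)}$ given by Theorem \ref{thm:counting}, providing a cross-check that no configurations have been missed. Once these sanity checks are in place, the observation follows by inspection of the census entry for $8_1$.
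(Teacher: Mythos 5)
Your proposal is correct and matches the paper's own argument: the paper establishes this observation precisely by inspection of the exhaustive search census (Figure \ref{fig:8_1}), which shows $m_L(8_1)=6$ is attained only at non-maximal $\tb$ (e.g., $\tb=-9,\,\rot=0$) while the maximal-$\tb$ peak ($\tb=-7$) first appears at size $7$, so that in particular no size-$6$ mosaic realizes it. Your two sanity checks---completeness of HOMFLY-PT for knots with at most $8$ crossings and agreement of the enumeration with the counts from Theorem \ref{thm:counting}---are exactly the validations the paper itself invokes in Section \ref{sec:search_details}.
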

In other words, there exists a smooth knot whose Legendrian mosaic number is not obtained by a ``peak'' on its mountain range. In fact, the Legendrian knot at the peak of the mountain range for $8_1$ requires two stabilizations, one positive and one negative, to realize  Legendrian mosaic number of $8_1$, as shown in Figure \ref{fig:8_1stabilization}. 
Our search concluded that $m_L(8_1)=6$, while $m(\Lambda)=7$, where $\Lambda$ is the Legendrian representative of $8_1$ with maximal Thurston-Bennequin invariant.  

This can also be seen in the censuses for $5_1$ and $7_1$ (Figures \ref{fig:5_1} and \ref{fig:7_1}), both of which contain multiple peaks in their mountain ranges. 
Both knots  have at least one Legendrian representative of with maximal Thurston-Bennequin invariant that does not realize its Legendrian mosaic number (although, other peaks do). 

\begin{figure}
    \centering
        \begin{equation*}
        \vcenter{\hbox{\includegraphics[scale=0.12]{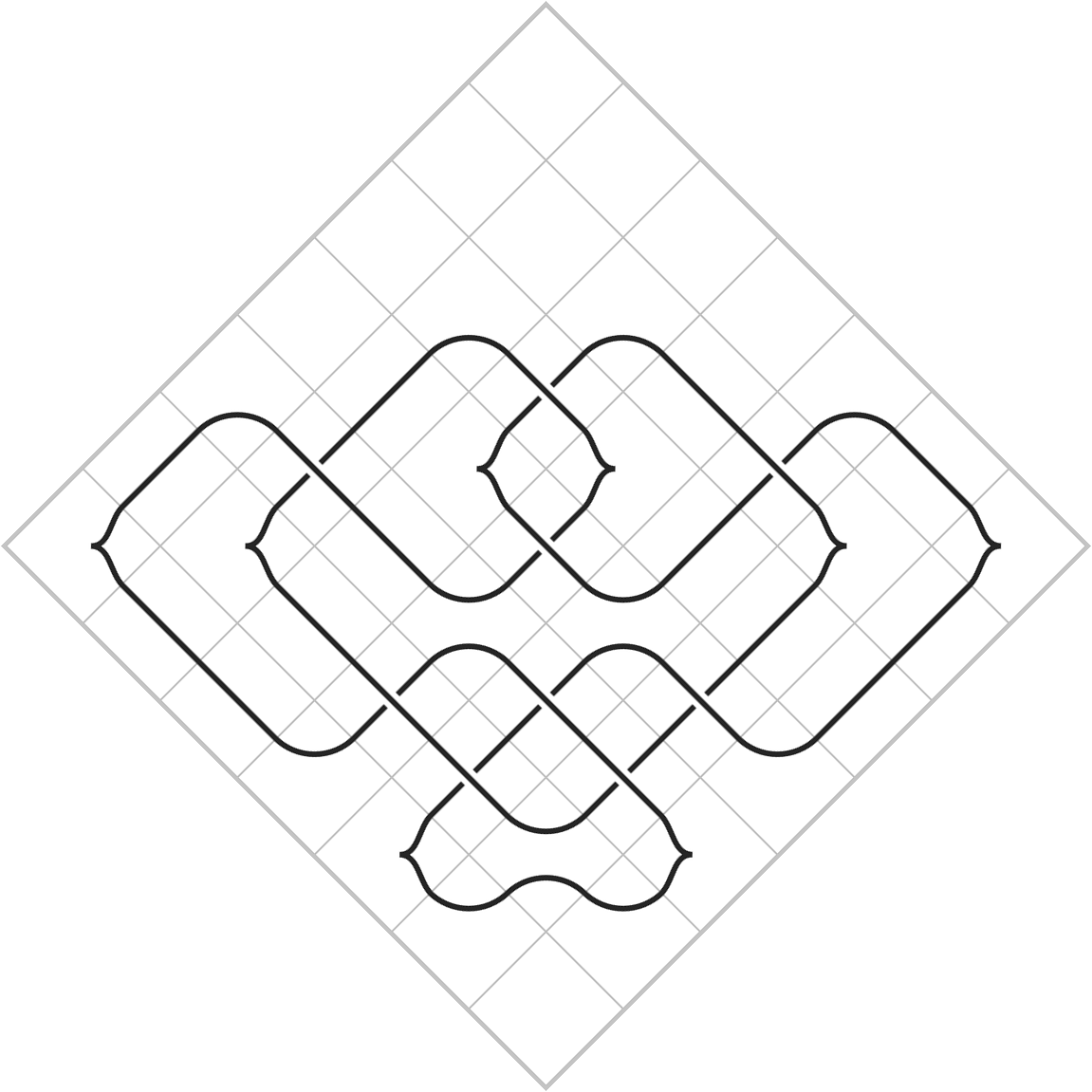}}}
        ~\xrightarrow{~S_+~\circ~S_-~}~        
        \vcenter{\hbox{\includegraphics[scale=0.14]{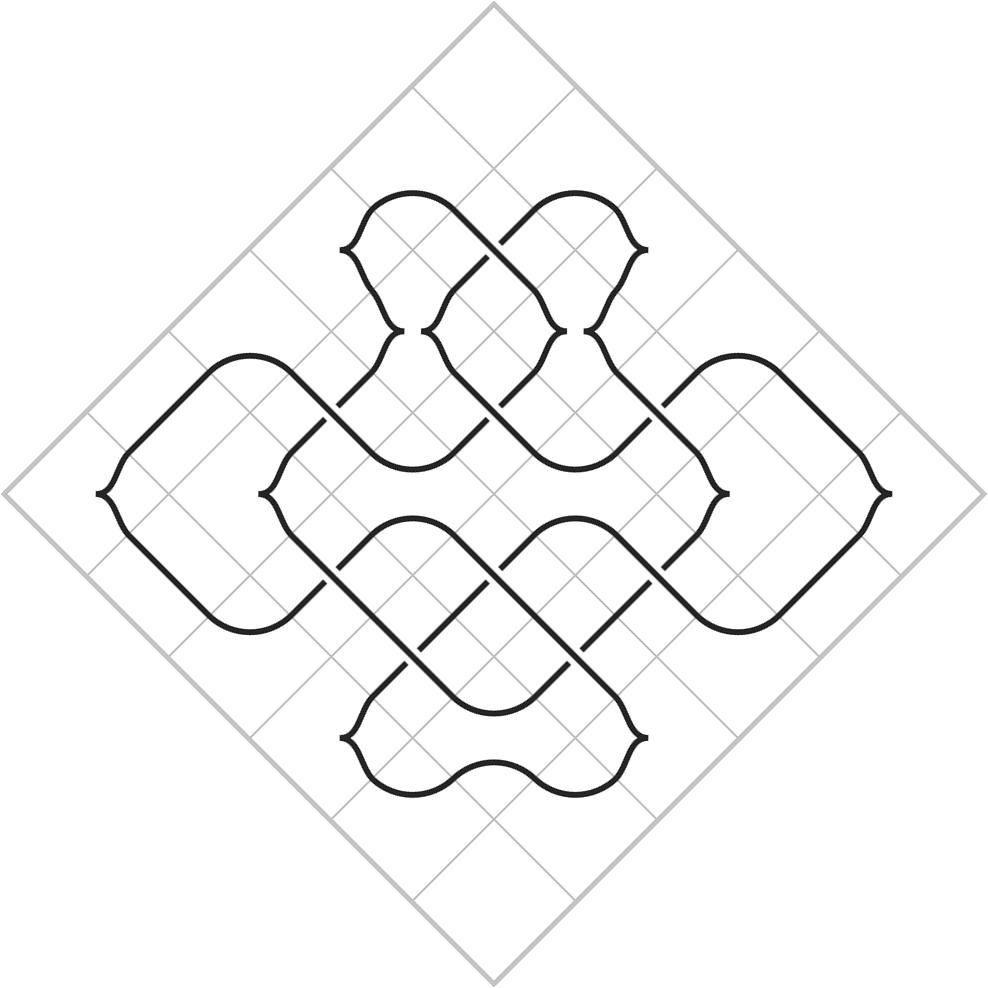}}}
        \end{equation*}
    \caption{Maximum tb and minimum mosaic number representations of $8_1$.}
    \label{fig:8_1stabilization}
\end{figure}

\section{Further Questions}\label{questions}
We conclude by suggesting some additional related areas to explore.
Since our computer search showed that the Legendrian mosaic number of $8_1$ is not attained by any Legendrian representative with maximal Thurston-Bennequin invariant, it is natural to ask whether infinitely many such knot types exist:
\begin{question}
    Are there infinitely many smooth knot types whose Legendrian mosaic number is only attained by stabilized Legendrian representatives?
\end{question}
Similarly, our computer search produced examples of stabilization decreasing the mosaic number. This leads us to ask the (weaker) question of whether infinitely many such examples exist:
\begin{question}
    Are there infinitely many Legendrian knots for which stabilization decreases the mosaic number?
\end{question}
Our original motivation behind the crab bucket construction at the end of Section \ref{combo_bounds} was to answer this question by providing an infinite family of Legendrian knots for which stabilization decreases the mosaic number.
The name of our construction alludes to the so-called ``crabs in a bucket effect." For all odd $n\geq 5$, any attempt to stabilize a cusp of $\beta_n$ seems to be prevented by its neighboring cusps and crossings, like crabs in a bucket pulling each other back in whenever one attempts to escape:
\begin{conjecture}
    For all odd $n\geq 5$, fix an orientation of the crab bucket $\beta_{n}$ so that $\rot(\beta_n)=1$. Then $\beta_n$ cannot be negatively destabilized without increasing its mosaic number.
\end{conjecture}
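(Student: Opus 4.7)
The plan is to show that the Legendrian knot $\Lambda'$ obtained by negatively destabilizing $\beta_n$ has $m(\Lambda') > n$ for all odd $n \geq 5$. Since $\rot(\beta_n) = 1$ by assumption, negative destabilization yields $\tb(\Lambda') = \tb(\beta_n) + 1 = -(n-1)(n-2) + 1$ and $\rot(\Lambda') = \rot(\beta_n) + 1 = 2$. The proof of Theorem \ref{sqrtbound} shows that any $n$-mosaic satisfies $\tb \geq -(n-2)^2 - (n-1) = -n^2 + 3n - 3$, so $\tb(\Lambda') = -n^2 + 3n - 1$ leaves only $2$ units of ``slack.'' Therefore any hypothetical $n$-mosaic of $\Lambda'$ must be nearly extremal in the sense of Theorem \ref{sqrtbound}: at most two inner-board tiles may fail to contain either a negative crossing or two cusps, and at most a few of the maximal $2n-2$ boundary cusps may be absent.

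The heart of the argument is to analyze these nearly-extremal configurations and show that none can realize $|\rot| = 2$. A key observation is that in the crab bucket, each interior double-cusp tile $T_8$ carries one upward- and one downward-oriented cusp and thus contributes $0$ to $\rot$; the full rotation $\rot(\beta_n) = 1$ is produced by the boundary cusps alone. Pushing $|\rot|$ up to $2$ demands an additional net imbalance of two cusps of the same orientation. Using the oriented tile inventory in Figure \ref{fig:oriented_tiles}, one catalogues the allowed local modifications of the crab-bucket configuration---deleting a boundary cusp, swapping a $T_{10}$ and a $T_8$, inserting a positive crossing, or re-orienting a $T_8$---and rules each out: each either overspends the $2$-unit $\tb$ budget (by Theorem \ref{sqrtbound} applied locally) or is structurally forbidden by the requirement that connection points on adjacent tiles match (the ``twofold rule'' cited in Section \ref{combo_bounds}).

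The main obstacle will be controlling the combinatorial diversity of these near-extremal modifications as $n$ grows, since the number of candidate local perturbations scales with $n$. A more systematic route is to strengthen the linear-algebraic analysis underlying Theorem \ref{thm:lin-alg} by supplementing the constraints $\sum |M|_{R_i} h(R_i) = \sum |M|_{R_i} v(R_i) = 0$ of Proposition \ref{prop:net_movement} with orientation-matching constraints at each pair of adjacent tiles. Performing quantifier elimination on the resulting refined polytope may yield a joint inequality of the form $n^2 \geq -\tb(\Lambda) + 2|\rot(\Lambda)| - c$ for some small constant $c$; substituting the invariants of $\Lambda'$ would then force $n^2 > n^2$, a contradiction settling the conjecture. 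Alternatively, since Observation \ref{cb-obs} decomposes $\beta_n$ as a Legendrian connected sum of $(2,q)$-torus knots, one might combine the classification of Legendrian torus knots with a study of how mosaic number interacts with the Legendrian connected sum to obtain a more conceptual proof that avoids case analysis altogether.
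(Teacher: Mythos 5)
First, a point of orientation: the statement you are proving is stated in the paper as a \emph{conjecture} and is left open there; the only supporting evidence the authors give is the exhaustive computer search showing the $n=5$ case (Section \ref{algos}). So there is no paper proof to compare against, and your proposal would need to stand alone as a complete argument. It does not. Your setup is correct and matches the paper's conventions: negative destabilization gives $\tb(\Lambda')=\tb(\beta_n)+1=-(n-1)(n-2)+1=-n^2+3n-1$ and $\rot(\Lambda')=2$, leaving exactly $2$ units of slack against the extremal count $-\tb\leq (n-2)^2+(n-1)$ from the proof of Theorem \ref{sqrtbound}. But the heart of your argument --- the analysis of ``nearly-extremal configurations'' --- tacitly assumes that any hypothetical $n$-mosaic of $\Lambda'$ is a local perturbation of the crab-bucket mosaic. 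Nothing establishes this rigidity. A mosaic realizing $\Lambda'$ with slack $2$ need not resemble $\beta_n$'s mosaic at all: the placement of crossing tiles, the distribution of cusps between boundary and inner board, and the global connectivity can all differ, and the number of such configurations grows with $n$. Cataloguing local modifications of one particular diagram cannot exhaust them, so the case analysis as described would not close.

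Your fallback via a strengthened linear-algebraic bound also fails on its own arithmetic. An inequality of the form $n^2\geq -\tb(\Lambda)+2|\rot(\Lambda)|-c$ gives, for $\Lambda'$, the condition $n^2\geq (n^2-3n+1)+4-c$, i.e.\ $3n\geq 5-c$, which is satisfied for all large $n$ no matter the constant $c$; it cannot ``force $n^2>n^2$.'' The slack between $n^2$ and $-\tb(\Lambda')$ is $3n-1$, linear in $n$, so no constant rotation correction to the $n^2$ bound of Theorem \ref{thm:lin-alg} can yield a contradiction. What you would actually need is a refinement of the sharper boundary-plus-inner-board bound of Theorem \ref{sqrtbound}, showing that achieving $|\rot|=2$ costs strictly more than $2$ units of the $\tb$ budget --- and the paper explicitly cautions that the tile-census framework of Section \ref{linear} replicates but cannot significantly improve these bounds without geometric input, precisely because adjacency and orientation-matching constraints are invisible to the vector of tile counts $\textbf{c}$. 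Finally, two smaller omissions: you do not address existence or uniqueness of the destabilization target $\Lambda'$ (which for connected sums of torus knots requires the classification results cited in the paper), and you do not even establish the base case $n=5$, which the paper obtains only by exhaustive search. In short: correct invariant bookkeeping, a plausible research plan, but no proof --- the rigidity step is missing and the proposed quantitative shortcut is arithmetically incapable of delivering the contradiction.
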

As discussed in Section \ref{algos}, this holds when $n=5$.
Since the crossing number and classification of connected sums of Legendrian torus knots are well-understood (see, for example, \cite{torus-additivity} or \cite{connect-sum}), answering the following question may aid in proving our conjecture:

\begin{question}
    Can we improve upon the bounds in Section \ref{bounds} by considering the crossing number or other invariants of Legendrian knots?
\end{question}
Of course, this question is intriguing in its own right.
In particular, it would be interesting to incorporate the crossing number into the linear algebraic approach discussed in Section \ref{linear}.

Furthermore, our upper bound construction in Section \ref{unknot-section} is limited only to Legendrian unknots. It would be desirable to generalize the resulting bounds to Legendrian representatives of other smooth knot types:

\begin{question}
    Can we obtain upper bounds for the mosaic number of Legendrian nontrivial knots, by reworking the barn tile construction or otherwise?
\end{question}

One way to rework the soil setup construction for a given Legendrian nontrivial knot $\Lambda$ might be to fix some pre-stabilized copy $\Lambda'$ of $\Lambda$ in the corner of a mosaic and place soil tiles adjacent to $\Lambda'$ before performing Kraken and fish moves.
Another way might be to consider moves other than Kraken and fish moves on barn tiles.

However, this construction may become less optimal as the rotation number increases. Namely, the only way to increase the rotation number in our construction is to use fish moves, each of which fills two barn tiles. As a consequence, Theorem \ref{fish-bound} places the four Legendrian unknots with rotation number 6 and Thurston-Bennequin invariants $-7,-9,-11,$ and $-13$ on a Legendrian 7-mosaic, while our computer search produced a Legendrian 6-mosaic for these unknots. This motivates the following question:

\begin{question}
 Can our bound in Theorem \ref{fish-bound} be improved using a different construction?
\end{question}

In particular, it would be desirable to decrease the coefficient on the term $3|\rot(\Lambda_U)|$ in our current bound. One approach could be to consider moves other than fish moves that affect the rotation number.

Finally, it is reasonable to alter the computer algorithm from Section \ref{algos} to generate random Legendrian knot and link mosaics. The following question, then, extends the growing literature on random knots (see, for example, \cite{random2} or \cite{random3}) to the study of Legendrian knots and knot mosaics:

\begin{question}
    What can random Legendrian knot mosaic generation tell us about the distributions of Legendrian knots and their invariants across all Legendrian knot mosaics of a given size?
\end{question}

\section*{Acknowledgments}
We would like to thank Eugene Fiorini for encouraging us to publish some of our findings on the Online Encyclopedia of Integer Sequences.
We would also like to thank the developers of the computer game \emph{Minecraft}, in which using a custom resource pack greatly facilitated our conception of the constructions in Section \ref{unknot-section}.

This research was done at Moravian University as part of the Research Challenges of Computational Methods in Discrete Mathematics REU; it was funded by the National Science Foundation (MPS-2150299).

\bibliography{references}
\pagebreak

\appendix

\renewcommand{\thefigure}{\thesection.\arabic{figure}}
\renewcommand{\theHfigure}{\thesection.\arabic{figure}}
\setcounter{figure}{0}

\renewcommand{\thetable}{\thesection.\arabic{table}}
\renewcommand{\theHtable}{\thesection.\arabic{table}}
\setcounter{table}{0}
    
\section{Calculation in Theorem \ref{thm:lin-alg}}\label{app:lin-alg-details}

Here we provide the details of the calculation used to obtain the inequalities at the end of the proof of Theorem \ref{thm:lin-alg}. The following Mathematica code computes the intersection of
$$V=\{(x_1,x_2,x_3,x_4,x_5)\in\R^5:x_3=x_4=0\}$$
with the image of $\R^{25}_{\geq0}$ under the transformation $P:\R^{25}\to\R^5$. There are a few tricks used to make the calculation more manageable, including removing duplicate columns from the starting matrix P1, and using null space matrices to restrict the image to $V$.
\definecolor{mygray}{RGB}{240,240,240}
\lstset{
    backgroundcolor=\color{mygray}, 
    basicstyle=\ttfamily,              
    breaklines=true,                   
}
\begin{lstlisting}[language=Mathematica]
P1={
    {0,0,-1/2,0,0,0,-1/2,0,0,0,-1,1,-1,-1,0,1,-1,-1},
    {0,0,1/2,0,0,0,-1/2,0,0,0,0,0,1,0,0,0,0,-1},
    {-1,-1,0,1,1,1,0,-1,0,-2,0,-2,0,0,2,2,0,0},
    {0,-1,-1,0,-1,1,1,1,0,0,-2,0,-2,0,0,0,2,2},
    {1,1,1,1,1,1,1,1,1,1,1,1,1,1,1,1,1,1}
};

E1={{0,0,1,0,0},{0,0,0,1,0}};
E2={{1,0,0,0,0},{0,1,0,0,0},{0,0,0,0,1}};

R1=Transpose[NullSpace[E1.P1]];
P2=E2.P1.R1;
R2=Transpose[NullSpace[P2]];

yVec=Transpose[{Array[y,Length[P2]]}];
vVec=Transpose[{Array[v,Length[R2[[1]]]]}];

Q=Simplify[R1.(PseudoInverse[P2].yVec+R2.vVec)];
inequalities=Thread[Transpose[Q][[1]]>=0];

image=Reduce[Exists[{v[1],v[2],v[3],v[4],v[5],v[6],v[7],v[8],v[9],v[10],v[11],v[12],v[13]},inequalities],{y[1],y[2],y[3]}];
inequalities=Simplify[image/.{y[1]->tb,y[2]->rot,y[3]->n2}]
\end{lstlisting}

The output of this program gives a condition \texttt{inequalities} which constrains $P(\R^{25}_{\geq0})\cap V$ in terms of \texttt{tb}, \texttt{rot}, and \texttt{n2}. Here, \texttt{tb} represents $\tb(\Lambda)$, \texttt{rot} represents $\rot(\Lambda)$, and \texttt{n2} represents $n^2$. For the rest of this calculation, to keep the notation from becoming unwieldy, we will write $\tb$ and $\rot$ to mean $\tb(\Lambda)$ and $\rot(\Lambda)$ respectively. In this way, \texttt{inequalities} can be written as
\begin{align*}
    &(\tb \leq 0 \wedge ((2 \rot \leq \tb \wedge 4 \rot + n^2 \geq \tb)\\
    &\vee (\tb < 2 \rot \leq -\tb \wedge \tb + n^2 \geq 0)\tag{$*$}\\
    &\vee (2 \rot + \tb > 0 \wedge n^2 \geq 4 \rot + \tb)))\\
    &\vee ((4 \rot + n^2 \geq \tb \vee 
     \rot > 0) \wedge (n^2 \geq 4 \rot + \tb \vee \rot \leq 0) \wedge \tb > 0).
\end{align*}
 Mathematica is unable to simplify this statement any further, but it does have a simpler form. We claim that $(*)$ is equivalent to the statement
\begin{equation*}
    (\tb+n^2\geq0)\wedge(n^2\geq4|\rot|+\tb),\tag{$\dagger$}
\end{equation*}
which we will show via the cases that follow. We will argue that in each case, the condition given by $(*)$ is equivalent to $(\dagger)$.

\begin{itemize}

\item[\textbf{Case (1.1)}]: $\tb\leq0$ and $2 \rot \leq \tb$.

We must show that $4 \rot + n^2 \geq \tb$ is equivalent to $(\dagger)$. Because $2\rot\leq\tb\leq0$, we know $|\rot|=-\rot$. So $(\dagger)$ can be written as $$(\tb+n^2\geq0)\wedge(n^2\geq-4\rot+\tb),$$
which is equivalent to $$(\tb+n^2\geq0)\wedge(4\rot+n^2\geq\tb).$$
The second inequality of this statement, $4\rot+n^2\geq\tb$, is the condition given in case (1.1). So we just need to show that case (1.1) also implies that $\tb+n^2\geq0$. Combining $2\rot\leq\tb$ and $4\rot+n^2\geq\tb$ tells us that $2\tb+n^2\geq \tb$. Rearranging this gives that $\tb+n^2\geq0$. Therefore, $(*)$ is equivalent to $(\dagger)$.

\item[\textbf{Case (1.2)}]:  $\tb\leq0$ and $\tb<2\rot\leq-\tb$.

In this case, the condition given by $(*)$ is $\tb+n^2\geq0$. The statement $\tb+n^2\geq0$ is exactly the first half of $(\dagger)$, so we just need to show that we can also recover the second half, which states $n^2\geq4|\rot|+\tb$. 

First consider if $\rot\geq0$. Because $2\rot\leq-\tb$, we have $2\rot+\tb\leq0$. Multiplying by 2 gives $4\rot+2\tb\leq0$. We then have
$$\tb+n^2\geq0\geq4\rot+2\tb.$$
Since $\rot\geq0$, $|\rot|=\rot$, and thus, 
$\tb+n^2\geq4|\rot|
+2\tb$. Subtracting $\tb$ from both sides gives $n^2\geq4|\rot|+\tb$, as desired.

If $\rot<0$, we can make a similar argument using the fact that $\tb<2\rot$. We find that
$$2\tb-4\rot<0\leq\tb+n^2.$$
Subtracting $\tb$ from both sides then gives $n^2\geq-4\rot+\tb$. Since $\rot<0$, we have $|\rot|=-\rot$, and thus, $n^2\geq4|\rot|+\tb$. This completes case (1.2).

\item[\textbf{Case (1.3)}]: $\tb\leq0$ and $2\rot>-\tb$.

We must show that the statement $n^2\geq4\rot+\tb$ is equivalent to $(\dagger)$. In this case, our assumptions imply that $2\rot>-\tb\geq0$. Thus, $\rot>0$, which means $|\rot|=\rot$. So the statement $n^2\geq4\rot+\tb$ is equivalent to $n^2\geq4|\rot|+\tb$, which is just the second part of $(\dagger)$. Thus, we just need to show that in this case, $n^2\geq4\rot+\tb$ implies the first part of $(\dagger)$, which is $\tb+n^2\geq0$. Because $2\rot>-\tb$, we have $4\rot+2\tb>0$. Thus, we can subtract $4\rot+2\tb$ from the right side of $n^2\geq4\rot+\tb$ to find that $n^2\geq-\tb$. Thus, $n^2+\tb\geq0$, as desired.

\end{itemize}

In the remaining two cases, we want to show that if $\tb>0$, then the statement 
\begin{align*}
    &(4 \rot + n^2 \geq \tb \vee 
     \rot > 0) \wedge (n^2 \geq 4 \rot + \tb \vee \rot \leq 0)\tag{$\prime$}
\end{align*}
is equivalent to $(\dagger)$. Note that exactly one of $\rot>0$ and $\rot\leq0$ will be true, so $(\prime)$ can be rewritten as
\begin{align*}
    &(\rot>0\wedge n^2 \geq 4 \rot + \tb)\vee(\rot\leq0\wedge4 \rot + n^2 \geq \tb).\tag{$\prime\prime$}
\end{align*}

\begin{itemize}

\item[\textbf{Case (2.1)}]: $\tb>0$ and $\rot>0$.


The condition given by $(\prime\prime)$ in this case is $n^2 \geq 4 \rot + \tb$, which we will show is equivalent to $(\dagger)$. Since $\rot>0$, $|\rot|=\rot$, so we have $n^2 \geq 4|\rot| + \tb$, which  is the same as the second half of $(\dagger)$.  It remains to show that this case also implies $\tb+n^2\geq0$, which is true because both $\tb$ and $n^2$ are nonnegative.

\item[\textbf{Case (2.2)}]: $\tb>0$ and $\rot<0$.

The condition given by $(\prime\prime)$ in this case is $n^2+4\rot \geq \tb$, which we will show is equivalent to $(\dagger)$. Since $\rot\leq0$, $|\rot|=-\rot$, and we obtain  $n^2 \geq 4|\rot| + \tb$. Again, this is the same as the second half of $(\dagger)$, so it suffices to show that this case also implies $\tb+n^2\geq0$. This is true because both $\tb$ and $n^2$ are nonnegative.

\end{itemize}

Since $(*)$ is equivalent to $(\dagger)$ in every case, we have shown that they are equivalent conditions. Therefore, for any Legendrian knot $\Lambda$ with a representation on an $n\times n$ mosaic, both of the following inequalities must be satisfied:
\begin{align*}
    n^2&\geq4|\rot(\Lambda)|+\tb(\Lambda);\\
    n^2&\geq-\tb(\Lambda).
\end{align*}

\pagebreak

\section{Counting Legendrian Link Mosaics} \label{app:counting}

Let $D_L^{(m,n)}$ be the number of $m\times n$ Legendrian link mosaics. The following is an implementation in Mathematica of the algorithm given in Section \ref{sec:counting} to compute $D_L^{(m,n)}$:
\begin{lstlisting}[language=Mathematica]
x[0] = o[0] = {{1}};
x[n_] := ArrayFlatten[{{x[n - 1], o[n - 1]}, {o[n - 1], x[n - 1]}}];
o[n_] := ArrayFlatten[{{o[n - 1], x[n - 1]}, {x[n - 1], 3 * o[n - 1]}}];
legendrian[m_, n_] :=
   If[m > 1 && n > 1,
   2 * Total[MatrixPower[x[m - 2] + o[m - 2], n - 2], 2], 1];
ParallelTable[legendrian[m, n], {m, 1, 11}, {n, 1, m}]
\end{lstlisting}

Tables \ref{tab:11-mosaics}, \ref{tab:mosaic-counts-2}, and \ref{tab:mosaic-counts-3} below list all values of $D_L^{(m,n)}$ for $1\leq n\leq m\leq 11$. 

\begin{table}[H]
\centering
\begin{tabular}{|l|l|}
\hline
& $D_L^{(11,n)}$                                                      \\ \hline
$n=1$                                                   & 1                                                         \\ \hline
$n=2$                                                & 1,024                                                     \\ \hline
$n=3$                                             & 11,208,704                                                \\ \hline
$n=4$                                & 236,710,854,656                                           \\ \hline
$n=5$                               & 6,252,734,836,998,144                                     \\ \hline
$n=6$                    & 181,637,396,577,778,663,424                               \\ \hline
$n=7$                  & 5,528,978,020,972,513,728,659,456                         \\ \hline
$n=8$           & 172,538,354,498,746,094,406,296,666,112                   \\ \hline
$n=9$         & 5,458,581,384,707,531,006,284,534,386,786,304             \\ \hline
$n=10$   &   174,020,259,805,998,593,378,475,801,984,133,758,976       \\ \hline
$n=11$    & 5,571,666,891,811,926,168,753,521,842,383,673,521,864,704 \\ \hline

\end{tabular}
\caption{The numbers $D_L^{(11,n)}$ of $11\times n$ Legendrian link mosaics, $1\leq n\leq 11$.}
\label{tab:11-mosaics}
\end{table}

\begin{sidewaystable}
\begin{center}
    \begin{tabular}{|l|l|l|l|l|l|l|}
\hline
$D_T^{(m,n)}$  & $m=3$ & $m=4$   & $m=5$     & $m=6$           & $m=7$                 & $m=8$                    \\ \hline
$n=1$          & 1   & 1     & 1       & 1             & 1                   & 1                               \\ \hline
$n=2$          & 4   & 8     & 16      & 32            & 64                  & 128                                        \\ \hline
$n=3$          & 20  & 104   & 544     & 2,848         & 14,912              & 78,080                                             \\ \hline
$n=4$           &       & 1,504 & 22,208  & 329,216       & 4,883,968           & 72,464,384           \                             \\ \hline
$n=5$           &       &         & 948,032 & 40,930,304    & 1,772,261,888       & 76,795,762,688                                      \\ \hline
$n=6$             &       &         &           & 5,204,262,912 & 666,548,862,976     & 85,575,149,027,328                               \\ \hline
$n=7$              &       &         &           &                 & 254,112,496,082,944 & 97,392,800,416,399,360                             \\ \hline
$n=8$           &       &         &           &                 &                       & 111,879,597,850,371,293,184                 \\ \hline
\end{tabular}
    \caption{$D_L^{(m,n)}$ for all $1\leq n\leq m\leq 8$, omitting $D_L^{(1,1)}=D_L^{(2,1)}=1$ and $D_L^{(2,2)}=2$.}
    \label{tab:mosaic-counts-2}
\end{center}

\begin{center}
\begin{tabular}{|l|l|l|}
\hline
$D_L^{(m,n)}$ & $m=9$                                 & $m=10$                                                         \\ \hline
$n=1$          & 1                                   & 1                                                                                                      \\ \hline
$n=2$         & 256                                 & 512                                                          \\ \hline
$n=3$         &  408,832                             & 2,140,672                                                   \\ \hline
$n=4$                             & 1,075,195,904                       & 15,953,379,328\\ \hline
$n=5$                     & 3,328,369,229,824                   & 144,260,644,012,032\\ \hline
$n=6$       & 10,995,378,691,637,248              & 1,413,150,484,120,731,648                                  \\ \hline
$n=7$       & 37,406,060,364,420,743,168          & 14,378,743,075,001,419,694,080                             \\ \hline
$n=8$   & 129,059,960,656,754,513,018,880     & 149,160,169,737,929,856,250,806,272                     \\ \hline
$n=9$     & 448,381,477,417,976,615,986,528,256 & 1,563,124,967,138,785,231,707,530,330,112                  \\ \hline
$n=10$   &     & 16,469,260,582,635,747,355,818,375,736,459,264       \\ \hline
\end{tabular}
    \caption{$D_L^{(m,n)}$ for all $9\leq m\leq 10$ and $1\leq n\leq m$.}
    \label{tab:mosaic-counts-3}
\end{center}
\end{sidewaystable}

\pagebreak

\section{Census Diagrams} \label{census-list}
    In this section, we provide censuses of known mosaic numbers of Legendrian knots.
    Recall that for any smooth knot type, its Legendrian representatives can be depicted in a mountain range organized by their classical invariants.
    We label each Legendrian representative in each mountain range with its mosaic number or our best bounds on it (indicated with a ``$\geq$" symbol), as determined by the computer search described in Section \ref{algos}, the bounds given in Sections \ref{bounds} and \ref{unknot-section}, and our own constructions. Yellow vertices denote examples of representatives where stabilization decreases the mosaic number.
    As the sign of a Legendrian knot's rotation number depends only on its orientation, the censuses presented in this paper include only the right half of the mountain diagram where knots have non-negative rotation number. A complete census can be obtained by mirroring these censuses horizontally about the column with $\rot = 0$. 

    \vspace{-10pt}







\newcommand{\tinygeq}{\mathrel{\raisebox{.4ex}{\scalebox{0.7}{$\geq$}}\mkern-5mu}}
\begin{figure}[H]
    \centering
    \resizebox{0.4\linewidth}{!}{
    \input{new_censuses/unknots_new.tikz}
    }
    \caption{Unknot census}
    \label{fig:new-unknots}
\end{figure}

\begin{figure}[H]
    \centering
    \resizebox{\linewidth}{!}{
    \tikzstyle{black}=[fill=white, draw=black, shape=circle, minimum size=0.8cm]
\tikzstyle{bruh}=[fill=yellow, draw=black, shape=circle, minimum size=0.8cm]
\tikzstyle{none}=[fill=none, draw=none, shape=circle]

\pgfdeclarelayer{nodelayer}
\pgfdeclarelayer{edgelayer}
\pgfsetlayers{edgelayer,nodelayer}

\begin{tikzpicture}
	\begin{pgfonlayer}{nodelayer}
		\node [style=black] (1) at (-0.25, 0.75) {5};
		\node [style=black] (2) at (0.5, 1.5) {5};
		\node [style=black] (5) at (1.25, 0.75) {5};
		\node [style=bruh] (6) at (2, 0) {6};
		\node [style=black] (8) at (0.5, 0) {5};
		\node [style=bruh] (11) at (2.75, -0.75) {6};
		\node [style=bruh] (13) at (1.25, -0.75) {5};
		\node [style=black] (14) at (-0.25, -0.75) {5};
		\node [style=black] (18) at (0.5, -1.5) {5};
		\node [style=black] (19) at (1.25, -2.25) {5};
		\node [style=black] (20) at (2, -3) {6};
		\node [style=bruh] (21) at (-0.25, -2.25) {6};
		\node [style=bruh] (23) at (0.5, -3) {5};
		\node [style=bruh] (25) at (2, -1.5) {5};
		\node [style=bruh] (26) at (2.75, -2.25) {6};
		\node [style=black] (30) at (2.75, -3.75) {6};
		\node [style=black] (34) at (-0.25, -3.75) {6};
		\node [style=black] (35) at (1.25, -3.75) {6};
		\node [style=black] (37) at (2, -4.5) {6};
		\node [style=black] (39) at (0.5, -4.5) {6};
		\node [style=black] (41) at (2.75, -5.25) {6};
		\node [style=black] (42) at (1.25, -5.25) {6};
		\node [style=black] (43) at (2, -6) {6};
		\node [style=bruh] (47) at (3.5, -4.5) {6};
		\node [style=bruh] (48) at (3.5, -3) {6};
		\node [style=bruh] (49) at (3.5, -1.5) {$\tinygeq7$};
		\node [style=none] (50) at (-1.25, 0.75) {-7};
		\node [style=none] (51) at (-1.25, 1.5) {-6};
		\node [style=none] (52) at (-2, -4.125) {tb};
		\node [style=none] (53) at (-1.25, 0) {-8};
		\node [style=none] (54) at (-1.25, -0.75) {-9};
		\node [style=none] (55) at (-1.25, -1.5) {-10};
		\node [style=none] (56) at (-1.25, -2.25) {-11};
		\node [style=none] (57) at (-1.25, -3) {-12};
		\node [style=none] (58) at (-1.25, -3.75) {-13};
		\node [style=none] (59) at (-1.25, -4.5) {-14};
		\node [style=none] (60) at (-1.25, -5.25) {-15};
		\node [style=none] (61) at (-1.25, -6) {-16};
		\node [style=none] (62) at (-0.25, 2.25) {0};
		\node [style=none] (63) at (0.5, 2.25) {1};
		\node [style=none] (64) at (1.25, 2.25) {2};
		\node [style=none] (65) at (2, 2.25) {3};
		\node [style=none] (66) at (2.75, 2.25) {4};
		\node [style=none] (67) at (2.25, 2.75) {rot};
		\node [style=none] (68) at (-1.25, -6.75) {-17};
		\node [style=none] (69) at (-1.25, -7.5) {-18};
		\node [style=none] (70) at (-1.25, -8.25) {-19};
		\node [style=none] (71) at (-1.25, -9) {-20};
		\node [style=none] (72) at (-1.25, -9.75) {-21};
		\node [style=none] (73) at (3.5, 2.25) {5};
		\node [style=none] (74) at (4.25, 2.25) {6};
		\node [style=bruh] (75) at (4.25, -2.25) {$\tinygeq7$};
		\node [style=bruh] (76) at (4.25, -3.75) {$\tinygeq7$};
		\node [style=black] (77) at (-0.25, -5.25) {6};
		\node [style=black] (78) at (-0.25, -9.75) {$\tinygeq7$};
		\node [style=black] (79) at (2.75, -9.75) {$\tinygeq7$};
		\node [style=black] (80) at (1.25, -8.25) {6};
		\node [style=bruh] (81) at (2, -9) {6};
		\node [style=black] (82) at (0.5, -9) {6};
		\node [style=black] (83) at (1.25, -9.75) {6};
		\node [style=black] (84) at (3.5, -9) {$\tinygeq7$};
		\node [style=black] (85) at (0.5, -6) {6};
		\node [style=black] (86) at (1.25, -6.75) {6};
		\node [style=black] (87) at (-0.25, -6.75) {6};
		\node [style=black] (88) at (0.5, -7.5) {6};
		\node [style=black] (89) at (3.5, -6) {6};
		\node [style=black] (91) at (2.75, -6.75) {6};
		\node [style=bruh] (93) at (5, -4.5) {$\tinygeq7$};
		\node [style=bruh] (94) at (4.25, -5.25) {6};
		\node [style=none] (95) at (5, 2.25) {7};
		\node [style=bruh] (96) at (2.75, -8.25) {$\tinygeq7$};
		\node [style=black] (97) at (3.5, -7.5) {$\tinygeq7$};
		\node [style=black] (98) at (4.25, -6.75) {$\tinygeq7$};
		\node [style=black] (99) at (2, -7.5) {6};
		\node [style=black] (100) at (-0.25, -8.25) {6};
	\end{pgfonlayer}
	\begin{pgfonlayer}{edgelayer}
		\draw (2) to (1);
		\draw (2) to (5);
		\draw (1) to (8);
		\draw (5) to (6);
		\draw (5) to (8);
		\draw (8) to (14);
		\draw (8) to (13);
		\draw (6) to (13);
		\draw (6) to (11);
		\draw (14) to (18);
		\draw (13) to (18);
		\draw (13) to (25);
		\draw (11) to (25);
		\draw (18) to (19);
		\draw (18) to (21);
		\draw (25) to (26);
		\draw (25) to (19);
		\draw (19) to (23);
		\draw (21) to (23);
		\draw (23) to (34);
		\draw (23) to (35);
		\draw (34) to (39);
		\draw (19) to (20);
		\draw (20) to (35);
		\draw (20) to (30);
		\draw (35) to (39);
		\draw (35) to (37);
		\draw (26) to (20);
		\draw (30) to (37);
		\draw (39) to (42);
		\draw (37) to (42);
		\draw (37) to (41);
		\draw (42) to (43);
		\draw (41) to (43);
		\draw (11) to (49);
		\draw (49) to (26);
		\draw (26) to (48);
		\draw (48) to (30);
		\draw (30) to (47);
		\draw (47) to (41);
		\draw (80) to (82);
		\draw (80) to (81);
		\draw (82) to (83);
		\draw (81) to (83);
		\draw (85) to (87);
		\draw (85) to (86);
		\draw (87) to (88);
		\draw (86) to (88);
		\draw (89) to (91);
		\draw (77) to (39);
		\draw (85) to (77);
		\draw (85) to (42);
		\draw (86) to (43);
		\draw (48) to (75);
		\draw (75) to (49);
		\draw (48) to (76);
		\draw (76) to (47);
		\draw (94) to (47);
		\draw (76) to (93);
		\draw (94) to (89);
		\draw (89) to (41);
		\draw (43) to (91);
		\draw (94) to (93);
		\draw (89) to (98);
		\draw (98) to (97);
		\draw (97) to (91);
		\draw (91) to (99);
		\draw (99) to (86);
		\draw (88) to (80);
		\draw (80) to (99);
		\draw (99) to (96);
		\draw (96) to (81);
		\draw (96) to (97);
		\draw (84) to (96);
		\draw (81) to (79);
		\draw (82) to (78);
		\draw (100) to (82);
		\draw (100) to (88);
	\end{pgfonlayer}
\end{tikzpicture}
    \tikzstyle{black}=[fill=white, draw=black, shape=circle, minimum size=0.8cm]
\tikzstyle{bruh}=[fill=yellow, draw=black, shape=circle, minimum size=0.8cm]
\tikzstyle{none}=[fill=none, draw=none, shape=circle]

\pgfdeclarelayer{nodelayer}
\pgfdeclarelayer{edgelayer}
\pgfsetlayers{edgelayer,nodelayer}

\begin{tikzpicture}
	\begin{pgfonlayer}{nodelayer}
		\node [style=black] (1) at (-0.25, 1.5) {5};
		\node [style=black] (8) at (0.5, 0.75) {5};
		\node [style=black] (13) at (1.25, 0) {5};
		\node [style=black] (14) at (-0.25, 0) {5};
		\node [style=black] (18) at (0.5, -0.75) {5};
		\node [style=bruh] (19) at (1.25, -1.5) {5};
		\node [style=black] (20) at (2, -2.25) {6};
		\node [style=black] (21) at (-0.25, -1.5) {5};
		\node [style=black] (23) at (0.5, -2.25) {5};
		\node [style=bruh] (25) at (2, -0.75) {6};
		\node [style=black] (26) at (2.75, -1.5) {6};
		\node [style=black] (30) at (2.75, -3) {6};
		\node [style=black] (34) at (-0.25, -3) {5};
		\node [style=black] (35) at (1.25, -3) {6};
		\node [style=black] (37) at (2, -3.75) {6};
		\node [style=black] (39) at (0.5, -3.75) {6};
		\node [style=black] (42) at (1.25, -4.5) {6};
		\node [style=black] (43) at (2, -5.25) {6};
		\node [style=bruh] (47) at (3.5, -3.75) {6};
		\node [style=bruh] (48) at (4.25, -3) {$\tinygeq7$};
		\node [style=black] (49) at (3.5, -2.25) {6};
		\node [style=none] (50) at (-1.25, 0.75) {0};
		\node [style=none] (51) at (-1.25, 1.5) {1};
		\node [style=none] (52) at (-2, -4) {tb};
		\node [style=none] (53) at (-1.25, 0) {-1};
		\node [style=none] (54) at (-1.25, -0.75) {-2};
		\node [style=none] (55) at (-1.25, -1.5) {-3};
		\node [style=none] (56) at (-1.25, -2.25) {-4};
		\node [style=none] (57) at (-1.25, -3) {-5};
		\node [style=none] (58) at (-1.25, -3.75) {-6};
		\node [style=none] (59) at (-1.25, -4.5) {-7};
		\node [style=none] (60) at (-1.25, -5.25) {-8};
		\node [style=none] (61) at (-1.25, -6) {-9};
		\node [style=none] (62) at (-0.25, 2.25) {0};
		\node [style=none] (63) at (0.5, 2.25) {1};
		\node [style=none] (64) at (1.25, 2.25) {2};
		\node [style=none] (65) at (2, 2.25) {3};
		\node [style=none] (66) at (2.75, 2.25) {4};
		\node [style=none] (67) at (2, 2.75) {rot};
		\node [style=black] (68) at (2.75, -4.5) {6};
		\node [style=black] (69) at (-0.25, -4.5) {6};
		\node [style=black] (70) at (-0.25, -6) {6};
		\node [style=black] (71) at (-0.25, -7.5) {6};
		\node [style=black] (72) at (0.5, -6.75) {6};
		\node [style=black] (73) at (1.25, -6) {6};
		\node [style=black] (74) at (0.5, -5.25) {6};
		\node [style=none] (75) at (-1.25, -6.75) {-10};
		\node [style=none] (76) at (-1.25, -7.5) {-11};
		\node [style=none] (77) at (3.5, 2.25) {5};
		\node [style=bruh] (78) at (3.5, -5.25) {6};
		\node [style=bruh] (79) at (4.25, -4.5) {$\tinygeq7$};
		\node [style=none] (80) at (-1.25, -8.25) {-12};
		\node [style=none] (81) at (-1.25, -9) {-13};
		\node [style=black] (82) at (0.5, -8.25) {6};
		\node [style=black] (83) at (1.25, -7.5) {6};
		\node [style=black] (84) at (-0.25, -9) {6};
		\node [style=black] (85) at (1.25, -9) {$\tinygeq7$};
		\node [style=black] (86) at (2, -8.25) {$\tinygeq7$};
		\node [style=black] (87) at (2.75, -7.5) {$\tinygeq7$};
		\node [style=black] (88) at (3.5, -6.75) {$\tinygeq7$};
		\node [style=black] (89) at (4.25, -6) {$\tinygeq7$};
		\node [style=black] (90) at (2.75, -6) {6};
		\node [style=black] (91) at (2, -6.75) {6};
		\node [style=none] (92) at (4.25, 2.25) {6};
	\end{pgfonlayer}
	\begin{pgfonlayer}{edgelayer}
		\draw (1) to (8);
		\draw (8) to (14);
		\draw (8) to (13);
		\draw (14) to (18);
		\draw (13) to (18);
		\draw (13) to (25);
		\draw (18) to (19);
		\draw (18) to (21);
		\draw (25) to (26);
		\draw (25) to (19);
		\draw (19) to (23);
		\draw (21) to (23);
		\draw (23) to (34);
		\draw (23) to (35);
		\draw (34) to (39);
		\draw (19) to (20);
		\draw (20) to (35);
		\draw (20) to (30);
		\draw (35) to (39);
		\draw (35) to (37);
		\draw (26) to (20);
		\draw (30) to (37);
		\draw (39) to (42);
		\draw (37) to (42);
		\draw (42) to (43);
		\draw (49) to (26);
		\draw (49) to (48);
		\draw (30) to (49);
		\draw (47) to (30);
		\draw (47) to (48);
		\draw (37) to (68);
		\draw (43) to (68);
		\draw (47) to (68);
		\draw (39) to (69);
		\draw (69) to (74);
		\draw (74) to (70);
		\draw (70) to (72);
		\draw (72) to (71);
		\draw (72) to (73);
		\draw (73) to (74);
		\draw (43) to (73);
		\draw (78) to (79);
		\draw (83) to (82);
		\draw (90) to (91);
		\draw (71) to (82);
		\draw (82) to (84);
		\draw (82) to (85);
		\draw (72) to (83);
		\draw (83) to (91);
		\draw (91) to (73);
		\draw (43) to (90);
		\draw (90) to (78);
		\draw (78) to (68);
		\draw (47) to (79);
		\draw (78) to (89);
		\draw (88) to (89);
		\draw (88) to (90);
		\draw (91) to (87);
		\draw (87) to (88);
		\draw (87) to (86);
		\draw (86) to (83);
		\draw (85) to (86);
	\end{pgfonlayer}
\end{tikzpicture}
    }
    \caption{Negative (i.e., $3_1$) and positive (i.e., $m(3_1)$) trefoil censuses}
    \label{fig:trefoils}
\end{figure}
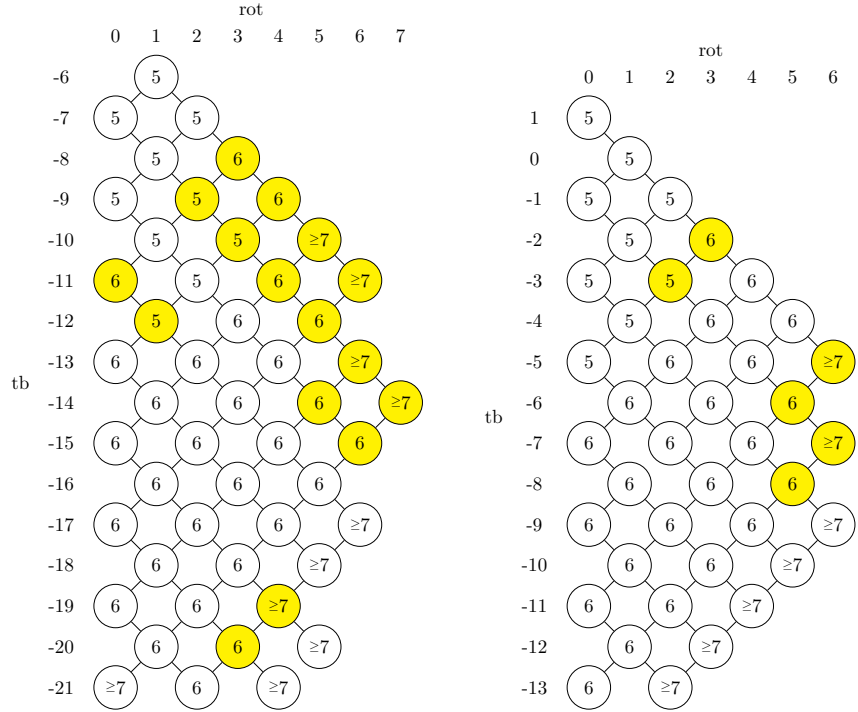

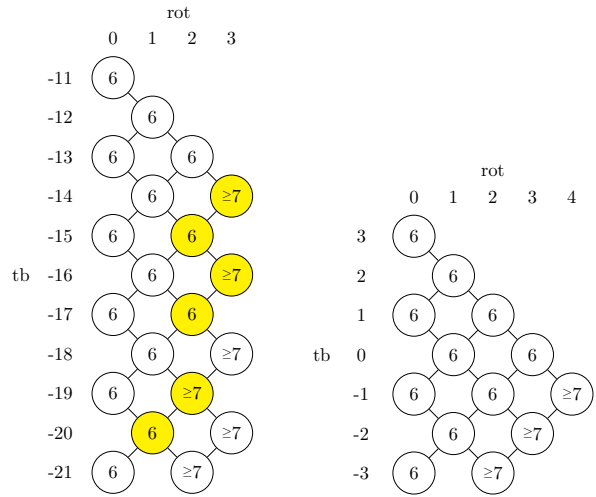
\begin{figure}[H]
    \centering
    \scalebox{0.7}{
    \tikzstyle{black}=[fill=white, draw=black, shape=circle, minimum size=0.8cm]
\tikzstyle{bruh}=[fill=yellow, draw=black, shape=circle, minimum size=0.8cm]
\tikzstyle{none}=[fill=none, draw=none, shape=circle]

\pgfdeclarelayer{nodelayer}
\pgfdeclarelayer{edgelayer}
\pgfsetlayers{edgelayer,nodelayer}

\begin{tikzpicture}
	\begin{pgfonlayer}{nodelayer}
		\node [style=black] (21) at (-0.25, -2.25) {6};
		\node [style=black] (23) at (0.5, -3) {6};
		\node [style=black] (34) at (-0.25, -3.75) {6};
		\node [style=black] (35) at (1.25, -3.75) {6};
		\node [style=bruh] (37) at (2, -4.5) {$\tinygeq7$};
		\node [style=black] (39) at (0.5, -4.5) {6};
		\node [style=bruh] (42) at (1.25, -5.25) {6};
		\node [style=bruh] (43) at (2, -6) {$\tinygeq7$};
		\node [style=none] (52) at (-2, -6) {tb};
		\node [style=none] (56) at (-1.25, -2.25) {-11};
		\node [style=none] (57) at (-1.25, -3) {-12};
		\node [style=none] (58) at (-1.25, -3.75) {-13};
		\node [style=none] (59) at (-1.25, -4.5) {-14};
		\node [style=none] (60) at (-1.25, -5.25) {-15};
		\node [style=none] (61) at (-1.25, -6) {-16};
		\node [style=none] (62) at (-0.25, -1.5) {0};
		\node [style=none] (63) at (0.5, -1.5) {1};
		\node [style=none] (64) at (1.25, -1.5) {2};
		\node [style=none] (65) at (2, -1.5) {3};
		\node [style=none] (67) at (1, -1) {rot};
		\node [style=none] (68) at (-1.25, -6.75) {-17};
		\node [style=none] (69) at (-1.25, -7.5) {-18};
		\node [style=none] (70) at (-1.25, -8.25) {-19};
		\node [style=none] (71) at (-1.25, -9) {-20};
		\node [style=none] (72) at (-1.25, -9.75) {-21};
		\node [style=black] (77) at (-0.25, -5.25) {6};
		\node [style=black] (78) at (-0.25, -9.75) {6};
		\node [style=bruh] (80) at (1.25, -8.25) {$\tinygeq7$};
		\node [style=black] (81) at (2, -9) {$\tinygeq7$};
		\node [style=bruh] (82) at (0.5, -9) {6};
		\node [style=black] (83) at (1.25, -9.75) {$\tinygeq7$};
		\node [style=black] (85) at (0.5, -6) {6};
		\node [style=bruh] (86) at (1.25, -6.75) {6};
		\node [style=black] (87) at (-0.25, -6.75) {6};
		\node [style=black] (88) at (0.5, -7.5) {6};
		\node [style=black] (99) at (2, -7.5) {$\tinygeq7$};
		\node [style=black] (100) at (-0.25, -8.25) {6};
	\end{pgfonlayer}
	\begin{pgfonlayer}{edgelayer}
		\draw (21) to (23);
		\draw (23) to (34);
		\draw (23) to (35);
		\draw (34) to (39);
		\draw (35) to (39);
		\draw (35) to (37);
		\draw (39) to (42);
		\draw (37) to (42);
		\draw (42) to (43);
		\draw (80) to (82);
		\draw (80) to (81);
		\draw (82) to (83);
		\draw (81) to (83);
		\draw (85) to (87);
		\draw (85) to (86);
		\draw (87) to (88);
		\draw (86) to (88);
		\draw (77) to (39);
		\draw (85) to (77);
		\draw (85) to (42);
		\draw (86) to (43);
		\draw (99) to (86);
		\draw (88) to (80);
		\draw (80) to (99);
		\draw (82) to (78);
		\draw (100) to (82);
		\draw (100) to (88);
	\end{pgfonlayer}
\end{tikzpicture}
    \tikzstyle{black}=[fill=white, draw=black, shape=circle, minimum size=0.8cm]
\tikzstyle{bruh}=[fill=yellow, draw=black, shape=circle, minimum size=0.8cm]
\tikzstyle{none}=[fill=none, draw=none, shape=circle]

\pgfdeclarelayer{nodelayer}
\pgfdeclarelayer{edgelayer}
\pgfsetlayers{edgelayer,nodelayer}

\begin{tikzpicture}
	\begin{pgfonlayer}{nodelayer}
		\node [style=black] (21) at (-0.25, -2.25) {6};
		\node [style=black] (23) at (0.5, -3) {6};
		\node [style=black] (34) at (-0.25, -3.75) {6};
		\node [style=black] (35) at (1.25, -3.75) {6};
		\node [style=black] (37) at (2, -4.5) {6};
		\node [style=black] (39) at (0.5, -4.5) {6};
		\node [style=black] (42) at (1.25, -5.25) {6};
		\node [style=black] (43) at (2, -6) {$\tinygeq7$};
		\node [style=none] (52) at (-2, -4.5) {tb};
		\node [style=none] (56) at (-1.25, -2.25) {3};
		\node [style=none] (57) at (-1.25, -3) {2};
		\node [style=none] (58) at (-1.25, -3.75) {1};
		\node [style=none] (59) at (-1.25, -4.5) {0};
		\node [style=none] (60) at (-1.25, -5.25) {-1};
		\node [style=none] (61) at (-1.25, -6) {-2};
		\node [style=none] (62) at (-0.25, -1.5) {0};
		\node [style=none] (63) at (0.5, -1.5) {1};
		\node [style=none] (64) at (1.25, -1.5) {2};
		\node [style=none] (65) at (2, -1.5) {3};
		\node [style=none] (67) at (1.25, -1) {rot};
		\node [style=none] (68) at (-1.25, -6.75) {-3};
		\node [style=black] (77) at (-0.25, -5.25) {6};
		\node [style=black] (85) at (0.5, -6) {6};
		\node [style=black] (86) at (1.25, -6.75) {$\tinygeq7$};
		\node [style=black] (87) at (-0.25, -6.75) {6};
		\node [style=none] (101) at (2.75, -1.5) {4};
		\node [style=black] (102) at (2.75, -5.25) {$\tinygeq7$};
	\end{pgfonlayer}
	\begin{pgfonlayer}{edgelayer}
		\draw (21) to (23);
		\draw (23) to (34);
		\draw (23) to (35);
		\draw (34) to (39);
		\draw (35) to (39);
		\draw (35) to (37);
		\draw (39) to (42);
		\draw (37) to (42);
		\draw (42) to (43);
		\draw (85) to (87);
		\draw (85) to (86);
		\draw (77) to (39);
		\draw (85) to (77);
		\draw (85) to (42);
		\draw (86) to (43);
		\draw (37) to (102);
		\draw (102) to (43);
	\end{pgfonlayer}
\end{tikzpicture}
    }
    \caption{Negative (i.e., $3_1\# 3_1$) and positive (i.e., $m(3_1)\#m(3_1)$) granny knot censuses}
    \label{fig:granny}
\end{figure}

\begin{figure}[H]
    \centering
    \scalebox{0.7}{
    \tikzstyle{black}=[fill=white, draw=black, shape=circle, minimum size=0.8cm]
\tikzstyle{bruh}=[fill=yellow, draw=black, shape=circle, minimum size=0.8cm]
\tikzstyle{none}=[fill=none, draw=none, shape=circle]

\pgfdeclarelayer{nodelayer}
\pgfdeclarelayer{edgelayer}
\pgfsetlayers{edgelayer,nodelayer}

\begin{tikzpicture}
	\begin{pgfonlayer}{nodelayer}
		\node [style=black] (14) at (-0.25, -0.75) {6};
		\node [style=black] (18) at (0.5, -1.5) {6};
		\node [style=black] (19) at (1.25, -2.25) {6};
		\node [style=black] (20) at (2, -3) {6};
		\node [style=black] (21) at (-0.25, -2.25) {6};
		\node [style=black] (23) at (0.5, -3) {6};
		\node [style=black] (30) at (2.75, -3.75) {6};
		\node [style=black] (34) at (-0.25, -3.75) {6};
		\node [style=black] (35) at (1.25, -3.75) {6};
		\node [style=black] (37) at (2, -4.5) {6};
		\node [style=black] (39) at (0.5, -4.5) {6};
		\node [style=black] (41) at (2.75, -5.25) {6};
		\node [style=black] (42) at (1.25, -5.25) {6};
		\node [style=black] (43) at (2, -6) {6};
		\node [style=black] (47) at (3.5, -4.5) {6};
		\node [style=none] (50) at (-1.25, -3.75) {-7};
		\node [style=none] (52) at (-2, -6) {tb};
		\node [style=none] (53) at (-1.25, -4.5) {-8};
		\node [style=none] (54) at (-1.25, -5.25) {-9};
		\node [style=none] (55) at (-1.25, -6) {-10};
		\node [style=none] (56) at (-1.25, -6.75) {-11};
		\node [style=none] (57) at (-1.25, -7.5) {-12};
		\node [style=none] (58) at (-1.25, -8.25) {-13};
		\node [style=none] (59) at (-1.25, -9) {-14};
		\node [style=none] (60) at (-1.25, -9.75) {-15};
		\node [style=none] (61) at (-1.25, -10.5) {-16};
		\node [style=none] (62) at (-0.25, 0) {0};
		\node [style=none] (63) at (0.5, 0) {1};
		\node [style=none] (64) at (1.25, 0) {2};
		\node [style=none] (65) at (2, 0) {3};
		\node [style=none] (66) at (2.75, 0) {4};
		\node [style=none] (67) at (2, 0.5) {rot};
		\node [style=none] (73) at (3.5, 0) {5};
		\node [style=black] (77) at (-0.25, -5.25) {6};
		\node [style=black] (78) at (-0.25, -9.75) {6};
		\node [style=black] (79) at (2.75, -9.75) {$\tinygeq 7$};
		\node [style=black] (80) at (1.25, -8.25) {6};
		\node [style=black] (81) at (2, -9) {$\tinygeq 7$};
		\node [style=black] (82) at (0.5, -9) {6};
		\node [style=black] (83) at (1.25, -9.75) {$\tinygeq 7$};
		\node [style=black] (84) at (3.5, -9) {$\tinygeq 7$};
		\node [style=black] (85) at (0.5, -6) {6};
		\node [style=black] (86) at (1.25, -6.75) {6};
		\node [style=black] (87) at (-0.25, -6.75) {6};
		\node [style=black] (88) at (0.5, -7.5) {6};
		\node [style=bruh] (89) at (3.5, -6) {6};
		\node [style=black] (91) at (2.75, -6.75) {6};
		\node [style=black] (96) at (2.75, -8.25) {6};
		\node [style=bruh] (97) at (3.5, -7.5) {6};
		\node [style=black] (99) at (2, -7.5) {6};
		\node [style=black] (100) at (-0.25, -8.25) {6};
		\node [style=none] (101) at (-1.25, -2.25) {-5};
		\node [style=none] (102) at (-1.25, -1.5) {-4};
		\node [style=none] (103) at (-1.25, -0.75) {-3};
		\node [style=none] (104) at (-1.25, -3) {-6};
		\node [style=bruh] (105) at (4.25, -5.25) {$\tinygeq 7$};
		\node [style=none] (106) at (4.25, 0) {6};
		\node [style=bruh] (107) at (4.25, -6.75) {$\tinygeq 7$};
		\node [style=black] (108) at (4.25, -8.25) {$\tinygeq 7$};
		\node [style=black] (109) at (0.5, -10.5) {$\tinygeq 7$};
		\node [style=black] (110) at (2, -10.5) {$\tinygeq 7$};
	\end{pgfonlayer}
	\begin{pgfonlayer}{edgelayer}
		\draw (14) to (18);
		\draw (18) to (19);
		\draw (18) to (21);
		\draw (19) to (23);
		\draw (21) to (23);
		\draw (23) to (34);
		\draw (23) to (35);
		\draw (34) to (39);
		\draw (19) to (20);
		\draw (20) to (35);
		\draw (20) to (30);
		\draw (35) to (39);
		\draw (35) to (37);
		\draw (30) to (37);
		\draw (39) to (42);
		\draw (37) to (42);
		\draw (37) to (41);
		\draw (42) to (43);
		\draw (41) to (43);
		\draw (30) to (47);
		\draw (47) to (41);
		\draw (80) to (82);
		\draw (80) to (81);
		\draw (82) to (83);
		\draw (81) to (83);
		\draw (85) to (87);
		\draw (85) to (86);
		\draw (87) to (88);
		\draw (86) to (88);
		\draw (89) to (91);
		\draw (77) to (39);
		\draw (85) to (77);
		\draw (85) to (42);
		\draw (86) to (43);
		\draw (89) to (41);
		\draw (43) to (91);
		\draw (97) to (91);
		\draw (91) to (99);
		\draw (99) to (86);
		\draw (88) to (80);
		\draw (80) to (99);
		\draw (99) to (96);
		\draw (96) to (81);
		\draw (96) to (97);
		\draw (84) to (96);
		\draw (81) to (79);
		\draw (82) to (78);
		\draw (100) to (82);
		\draw (100) to (88);
		\draw (105) to (89);
		\draw (47) to (105);
		\draw (89) to (107);
		\draw (107) to (97);
		\draw (97) to (108);
		\draw (108) to (84);
		\draw (79) to (84);
		\draw (83) to (110);
		\draw (110) to (79);
		\draw (78) to (109);
		\draw (109) to (83);
	\end{pgfonlayer}
\end{tikzpicture}
    }
    \caption{$4_1=m(4_1)$ census}
    \label{fig:4_1}
\end{figure}

\begin{figure}[H]
    \centering
    \resizebox{\linewidth}{!}{
    \tikzstyle{black}=[fill=white, draw=black, shape=circle, minimum size=0.8cm]
\tikzstyle{bruh}=[fill=yellow, draw=black, shape=circle, minimum size=0.8cm]
\tikzstyle{none}=[fill=none, draw=none, shape=circle]

\pgfdeclarelayer{nodelayer}
\pgfdeclarelayer{edgelayer}
\pgfsetlayers{edgelayer,nodelayer}

\begin{tikzpicture}
	\begin{pgfonlayer}{nodelayer}
		\node [style=bruh] (6) at (2, -1.5) {7};
		\node [style=black] (8) at (0.5, -1.5) {6};
		\node [style=bruh] (11) at (2.75, -2.25) {$\tinygeq7$};
		\node [style=bruh] (13) at (1.25, -2.25) {6};
		\node [style=black] (14) at (-0.25, -2.25) {6};
		\node [style=black] (18) at (0.5, -3) {6};
		\node [style=black] (19) at (1.25, -3.75) {6};
		\node [style=bruh] (20) at (2, -4.5) {6};
		\node [style=black] (21) at (-0.25, -3.75) {6};
		\node [style=black] (23) at (0.5, -4.5) {6};
		\node [style=bruh] (25) at (2, -3) {6};
		\node [style=bruh] (26) at (2.75, -3.75) {$\tinygeq7$};
		\node [style=bruh] (30) at (2.75, -5.25) {6};
		\node [style=black] (34) at (-0.25, -5.25) {6};
		\node [style=black] (35) at (1.25, -5.25) {6};
		\node [style=black] (37) at (2, -6) {6};
		\node [style=black] (39) at (0.5, -6) {6};
		\node [style=bruh] (41) at (2.75, -6.75) {6};
		\node [style=black] (42) at (1.25, -6.75) {6};
		\node [style=black] (43) at (2, -7.5) {6};
		\node [style=bruh] (47) at (3.5, -6) {$\tinygeq7$};
		\node [style=bruh] (48) at (3.5, -4.5) {$\tinygeq7$};
		\node [style=black] (49) at (3.5, -3) {$\tinygeq7$};
		\node [style=none] (52) at (-2, -5.25) {tb};
		\node [style=none] (55) at (-1.25, -1.5) {-10};
		\node [style=none] (56) at (-1.25, -2.25) {-11};
		\node [style=none] (57) at (-1.25, -3) {-12};
		\node [style=none] (58) at (-1.25, -3.75) {-13};
		\node [style=none] (59) at (-1.25, -4.5) {-14};
		\node [style=none] (60) at (-1.25, -5.25) {-15};
		\node [style=none] (61) at (-1.25, -6) {-16};
		\node [style=none] (62) at (-0.25, -0.75) {0};
		\node [style=none] (63) at (0.5, -0.75) {1};
		\node [style=none] (64) at (1.25, -0.75) {2};
		\node [style=none] (65) at (2, -0.75) {3};
		\node [style=none] (66) at (2.75, -0.75) {4};
		\node [style=none] (67) at (2.25, -0.25) {rot};
		\node [style=none] (68) at (-1.25, -6.75) {-17};
		\node [style=none] (69) at (-1.25, -7.5) {-18};
		\node [style=none] (70) at (-1.25, -8.25) {-19};
		\node [style=none] (71) at (-1.25, -9) {-20};
		\node [style=none] (73) at (3.5, -0.75) {5};
		\node [style=none] (74) at (4.25, -0.75) {6};
		\node [style=black] (75) at (4.25, -3.75) {$\tinygeq7$};
		\node [style=black] (76) at (4.25, -5.25) {$\tinygeq7$};
		\node [style=black] (77) at (-0.25, -6.75) {6};
		\node [style=black] (85) at (0.5, -7.5) {6};
		\node [style=black] (86) at (1.25, -8.25) {$\tinygeq7$};
		\node [style=black] (87) at (-0.25, -8.25) {6};
		\node [style=black] (88) at (0.5, -9) {$\tinygeq7$};
		\node [style=black] (89) at (3.5, -7.5) {$\tinygeq7$};
		\node [style=black] (91) at (2.75, -8.25) {$\tinygeq7$};
		\node [style=black] (94) at (4.25, -6.75) {$\tinygeq7$};
		\node [style=none] (95) at (5, -0.75) {7};
		\node [style=black] (97) at (3.5, -9) {$\tinygeq7$};
		\node [style=black] (98) at (4.25, -8.25) {$\tinygeq7$};
		\node [style=black] (99) at (2, -9) {$\tinygeq7$};
	\end{pgfonlayer}
	\begin{pgfonlayer}{edgelayer}
		\draw (8) to (14);
		\draw (8) to (13);
		\draw (6) to (13);
		\draw (6) to (11);
		\draw (14) to (18);
		\draw (13) to (18);
		\draw (13) to (25);
		\draw (11) to (25);
		\draw (18) to (19);
		\draw (18) to (21);
		\draw (25) to (26);
		\draw (25) to (19);
		\draw (19) to (23);
		\draw (21) to (23);
		\draw (23) to (34);
		\draw (23) to (35);
		\draw (34) to (39);
		\draw (19) to (20);
		\draw (20) to (35);
		\draw (20) to (30);
		\draw (35) to (39);
		\draw (35) to (37);
		\draw (26) to (20);
		\draw (30) to (37);
		\draw (39) to (42);
		\draw (37) to (42);
		\draw (37) to (41);
		\draw (42) to (43);
		\draw (41) to (43);
		\draw (11) to (49);
		\draw (49) to (26);
		\draw (26) to (48);
		\draw (48) to (30);
		\draw (30) to (47);
		\draw (47) to (41);
		\draw (85) to (87);
		\draw (85) to (86);
		\draw (87) to (88);
		\draw (86) to (88);
		\draw (89) to (91);
		\draw (77) to (39);
		\draw (85) to (77);
		\draw (85) to (42);
		\draw (86) to (43);
		\draw (48) to (75);
		\draw (75) to (49);
		\draw (48) to (76);
		\draw (76) to (47);
		\draw (94) to (47);
		\draw (94) to (89);
		\draw (89) to (41);
		\draw (43) to (91);
		\draw (89) to (98);
		\draw [in=45, out=-135] (98) to (97);
		\draw (97) to (91);
		\draw (91) to (99);
		\draw (99) to (86);
	\end{pgfonlayer}
\end{tikzpicture}
    \tikzstyle{black}=[fill=white, draw=black, shape=circle, minimum size=0.8cm]
\tikzstyle{bruh}=[fill=yellow, draw=black, shape=circle, minimum size=0.8cm]
\tikzstyle{none}=[fill=none, draw=none, shape=circle]

\pgfdeclarelayer{nodelayer}
\pgfdeclarelayer{edgelayer}
\pgfsetlayers{edgelayer,nodelayer}

\begin{tikzpicture}
	\begin{pgfonlayer}{nodelayer}
		\node [style=black] (21) at (-0.25, -4.5) {6};
		\node [style=black] (23) at (0.5, -5.25) {6};
		\node [style=black] (34) at (-0.25, -6) {6};
		\node [style=black] (35) at (1.25, -6) {6};
		\node [style=black] (37) at (2, -6.75) {6};
		\node [style=black] (39) at (0.5, -6.75) {6};
		\node [style=black] (41) at (2.75, -7.5) {6};
		\node [style=black] (42) at (1.25, -7.5) {6};
		\node [style=black] (43) at (2, -8.25) {6};
		\node [style=none] (52) at (-2, -9.25) {tb};
		\node [style=none] (53) at (-1.25, -6) {1};
		\node [style=none] (54) at (-1.25, -6.75) {0};
		\node [style=none] (55) at (-1.25, -7.5) {-1};
		\node [style=none] (56) at (-1.25, -8.25) {-2};
		\node [style=none] (57) at (-1.25, -9) {-3};
		\node [style=none] (58) at (-1.25, -9.75) {-4};
		\node [style=none] (59) at (-1.25, -10.5) {-5};
		\node [style=none] (60) at (-1.25, -11.25) {-6};
		\node [style=none] (61) at (-1.25, -12) {-7};
		\node [style=none] (62) at (-0.25, -3.75) {0};
		\node [style=none] (63) at (0.5, -3.75) {1};
		\node [style=none] (64) at (1.25, -3.75) {2};
		\node [style=none] (65) at (2, -3.75) {3};
		\node [style=none] (66) at (2.75, -3.75) {4};
		\node [style=none] (67) at (1.5, -3.25) {rot};
		\node [style=black] (77) at (-0.25, -7.5) {6};
		\node [style=black] (78) at (-0.25, -12) {6};
		\node [style=black] (79) at (2.75, -12) {$\tinygeq7$};
		\node [style=black] (80) at (1.25, -10.5) {6};
		\node [style=black] (81) at (2, -11.25) {$\tinygeq7$};
		\node [style=black] (82) at (0.5, -11.25) {6};
		\node [style=black] (83) at (1.25, -12) {$\tinygeq7$};
		\node [style=black] (85) at (0.5, -8.25) {6};
		\node [style=black] (86) at (1.25, -9) {6};
		\node [style=black] (87) at (-0.25, -9) {6};
		\node [style=black] (88) at (0.5, -9.75) {6};
		\node [style=bruh] (91) at (2.75, -9) {6};
		\node [style=black] (96) at (2.75, -10.5) {$\tinygeq7$};
		\node [style=black] (99) at (2, -9.75) {6};
		\node [style=black] (100) at (-0.25, -10.5) {6};
		\node [style=none] (111) at (-1.25, -12.75) {-8};
		\node [style=none] (112) at (-1.25, -5.25) {2};
		\node [style=none] (113) at (-1.25, -4.5) {3};
		\node [style=none] (114) at (3.5, -3.75) {5};
		\node [style=bruh] (115) at (3.5, -8.25) {$\tinygeq7$};
		\node [style=black] (116) at (3.5, -9.75) {$\tinygeq7$};
		\node [style=black] (117) at (3.5, -11.25) {$\tinygeq7$};
		\node [style=black] (118) at (0.5, -12.75) {$\tinygeq7$};
		\node [style=black] (119) at (2, -12.75) {$\tinygeq7$};
		\node [style=black] (120) at (3.5, -12.75) {$\tinygeq7$};
	\end{pgfonlayer}
	\begin{pgfonlayer}{edgelayer}
		\draw (21) to (23);
		\draw (23) to (34);
		\draw (23) to (35);
		\draw (34) to (39);
		\draw (35) to (39);
		\draw (35) to (37);
		\draw (39) to (42);
		\draw (37) to (42);
		\draw (37) to (41);
		\draw (42) to (43);
		\draw (41) to (43);
		\draw (80) to (82);
		\draw (80) to (81);
		\draw (82) to (83);
		\draw (81) to (83);
		\draw (85) to (87);
		\draw (85) to (86);
		\draw (87) to (88);
		\draw (86) to (88);
		\draw (77) to (39);
		\draw (85) to (77);
		\draw (85) to (42);
		\draw (86) to (43);
		\draw (43) to (91);
		\draw (91) to (99);
		\draw (99) to (86);
		\draw (88) to (80);
		\draw (80) to (99);
		\draw (99) to (96);
		\draw (96) to (81);
		\draw (81) to (79);
		\draw (82) to (78);
		\draw (100) to (82);
		\draw (100) to (88);
		\draw (41) to (115);
		\draw (115) to (91);
		\draw (91) to (116);
		\draw (96) to (116);
		\draw (117) to (96);
		\draw (79) to (117);
		\draw (120) to (79);
		\draw (79) to (119);
		\draw (119) to (83);
		\draw (83) to (118);
		\draw (118) to (78);
	\end{pgfonlayer}
\end{tikzpicture}
    }
    \caption{$5_1$ and $m(5_1)$ censuses}
    \label{fig:5_1}
\end{figure}

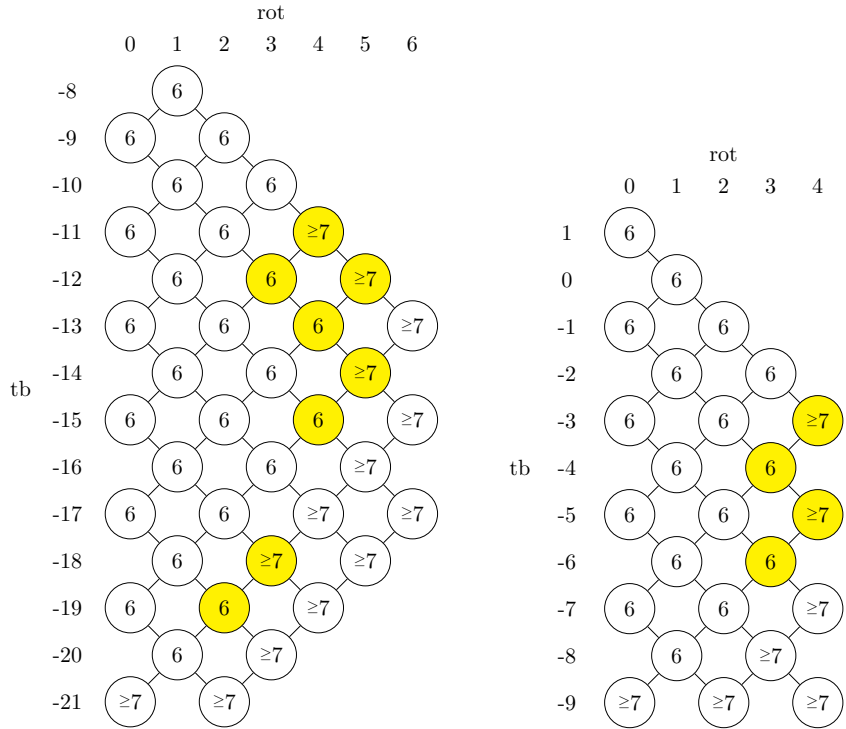
\begin{figure}[H]
    \centering
    \resizebox{\linewidth}{!}{
    \tikzstyle{black}=[fill=white, draw=black, shape=circle, minimum size=0.8cm]
\tikzstyle{bruh}=[fill=yellow, draw=black, shape=circle, minimum size=0.8cm]
\tikzstyle{none}=[fill=none, draw=none, shape=circle]

\pgfdeclarelayer{nodelayer}
\pgfdeclarelayer{edgelayer}
\pgfsetlayers{edgelayer,nodelayer}

\begin{tikzpicture}
	\begin{pgfonlayer}{nodelayer}
		\node [style=black] (18) at (0.5, -4.5) {6};
		\node [style=black] (19) at (1.25, -5.25) {6};
		\node [style=black] (20) at (2, -6) {6};
		\node [style=black] (21) at (-0.25, -5.25) {6};
		\node [style=black] (23) at (0.5, -6) {6};
		\node [style=bruh] (30) at (2.75, -6.75) {$\tinygeq7$};
		\node [style=black] (34) at (-0.25, -6.75) {6};
		\node [style=black] (35) at (1.25, -6.75) {6};
		\node [style=bruh] (37) at (2, -7.5) {6};
		\node [style=black] (39) at (0.5, -7.5) {6};
		\node [style=bruh] (41) at (2.75, -8.25) {6};
		\node [style=black] (42) at (1.25, -8.25) {6};
		\node [style=black] (43) at (2, -9) {6};
		\node [style=bruh] (47) at (3.5, -7.5) {$\tinygeq7$};
		\node [style=none] (52) at (-2, -9.25) {tb};
		\node [style=none] (53) at (-1.25, -4.5) {-8};
		\node [style=none] (54) at (-1.25, -5.25) {-9};
		\node [style=none] (55) at (-1.25, -6) {-10};
		\node [style=none] (56) at (-1.25, -6.75) {-11};
		\node [style=none] (57) at (-1.25, -7.5) {-12};
		\node [style=none] (58) at (-1.25, -8.25) {-13};
		\node [style=none] (59) at (-1.25, -9) {-14};
		\node [style=none] (60) at (-1.25, -9.75) {-15};
		\node [style=none] (61) at (-1.25, -10.5) {-16};
		\node [style=none] (62) at (-0.25, -3.75) {0};
		\node [style=none] (63) at (0.5, -3.75) {1};
		\node [style=none] (64) at (1.25, -3.75) {2};
		\node [style=none] (65) at (2, -3.75) {3};
		\node [style=none] (66) at (2.75, -3.75) {4};
		\node [style=none] (67) at (2, -3.25) {rot};
		\node [style=none] (73) at (3.5, -3.75) {5};
		\node [style=black] (77) at (-0.25, -8.25) {6};
		\node [style=black] (78) at (-0.25, -12.75) {6};
		\node [style=black] (79) at (2.75, -12.75) {$\tinygeq7$};
		\node [style=black] (80) at (1.25, -11.25) {6};
		\node [style=bruh] (81) at (2, -12) {$\tinygeq7$};
		\node [style=black] (82) at (0.5, -12) {6};
		\node [style=bruh] (83) at (1.25, -12.75) {6};
		\node [style=black] (84) at (3.5, -12) {$\tinygeq7$};
		\node [style=black] (85) at (0.5, -9) {6};
		\node [style=black] (86) at (1.25, -9.75) {6};
		\node [style=black] (87) at (-0.25, -9.75) {6};
		\node [style=black] (88) at (0.5, -10.5) {6};
		\node [style=bruh] (89) at (3.5, -9) {$\tinygeq7$};
		\node [style=bruh] (91) at (2.75, -9.75) {6};
		\node [style=black] (96) at (2.75, -11.25) {$\tinygeq7$};
		\node [style=black] (97) at (3.5, -10.5) {$\tinygeq7$};
		\node [style=black] (99) at (2, -10.5) {6};
		\node [style=black] (100) at (-0.25, -11.25) {6};
		\node [style=black] (105) at (4.25, -8.25) {$\tinygeq7$};
		\node [style=none] (106) at (4.25, -3.75) {6};
		\node [style=black] (107) at (4.25, -9.75) {$\tinygeq7$};
		\node [style=black] (108) at (4.25, -11.25) {$\tinygeq7$};
		\node [style=black] (109) at (0.5, -13.5) {6};
		\node [style=black] (110) at (2, -13.5) {$\tinygeq7$};
		\node [style=none] (111) at (-1.25, -11.25) {-17};
		\node [style=none] (112) at (-1.25, -12) {-18};
		\node [style=none] (113) at (-1.25, -12.75) {-19};
		\node [style=none] (114) at (-1.25, -13.5) {-20};
		\node [style=none] (115) at (-1.25, -14.25) {-21};
		\node [style=black] (116) at (-0.25, -14.25) {$\tinygeq7$};
		\node [style=black] (117) at (1.25, -14.25) {$\tinygeq7$};
	\end{pgfonlayer}
	\begin{pgfonlayer}{edgelayer}
		\draw (18) to (19);
		\draw (18) to (21);
		\draw (19) to (23);
		\draw (21) to (23);
		\draw (23) to (34);
		\draw (23) to (35);
		\draw (34) to (39);
		\draw (19) to (20);
		\draw (20) to (35);
		\draw (20) to (30);
		\draw (35) to (39);
		\draw (35) to (37);
		\draw (30) to (37);
		\draw (39) to (42);
		\draw (37) to (42);
		\draw (37) to (41);
		\draw (42) to (43);
		\draw (41) to (43);
		\draw (30) to (47);
		\draw (47) to (41);
		\draw (80) to (82);
		\draw (80) to (81);
		\draw (82) to (83);
		\draw (81) to (83);
		\draw (85) to (87);
		\draw (85) to (86);
		\draw (87) to (88);
		\draw (86) to (88);
		\draw (89) to (91);
		\draw (77) to (39);
		\draw (85) to (77);
		\draw (85) to (42);
		\draw (86) to (43);
		\draw (89) to (41);
		\draw (43) to (91);
		\draw (97) to (91);
		\draw (91) to (99);
		\draw (99) to (86);
		\draw (88) to (80);
		\draw (80) to (99);
		\draw (99) to (96);
		\draw (96) to (81);
		\draw (96) to (97);
		\draw (84) to (96);
		\draw (81) to (79);
		\draw (82) to (78);
		\draw (100) to (82);
		\draw (100) to (88);
		\draw (105) to (89);
		\draw (47) to (105);
		\draw (89) to (107);
		\draw (107) to (97);
		\draw (97) to (108);
		\draw (108) to (84);
		\draw (79) to (84);
		\draw (83) to (110);
		\draw (110) to (79);
		\draw (78) to (109);
		\draw (109) to (83);
		\draw (109) to (116);
		\draw (109) to (117);
		\draw (117) to (110);
	\end{pgfonlayer}
\end{tikzpicture}
    \tikzstyle{black}=[fill=white, draw=black, shape=circle, minimum size=0.8cm]
\tikzstyle{bruh}=[fill=yellow, draw=black, shape=circle, minimum size=0.8cm]
\tikzstyle{none}=[fill=none, draw=none, shape=circle]

\pgfdeclarelayer{nodelayer}
\pgfdeclarelayer{edgelayer}
\pgfsetlayers{edgelayer,nodelayer}

\begin{tikzpicture}
	\begin{pgfonlayer}{nodelayer}
		\node [style=black] (21) at (-0.25, -4.5) {6};
		\node [style=black] (23) at (0.5, -5.25) {6};
		\node [style=black] (34) at (-0.25, -6) {6};
		\node [style=black] (35) at (1.25, -6) {6};
		\node [style=black] (37) at (2, -6.75) {6};
		\node [style=black] (39) at (0.5, -6.75) {6};
		\node [style=bruh] (41) at (2.75, -7.5) {$\tinygeq7$};
		\node [style=black] (42) at (1.25, -7.5) {6};
		\node [style=bruh] (43) at (2, -8.25) {6};
		\node [style=none] (52) at (-2, -8.25) {tb};
		\node [style=none] (53) at (-1.25, -4.5) {1};
		\node [style=none] (54) at (-1.25, -5.25) {0};
		\node [style=none] (55) at (-1.25, -6) {-1};
		\node [style=none] (56) at (-1.25, -6.75) {-2};
		\node [style=none] (57) at (-1.25, -7.5) {-3};
		\node [style=none] (58) at (-1.25, -8.25) {-4};
		\node [style=none] (59) at (-1.25, -9) {-5};
		\node [style=none] (60) at (-1.25, -9.75) {-6};
		\node [style=none] (61) at (-1.25, -10.5) {-7};
		\node [style=none] (62) at (-0.25, -3.75) {0};
		\node [style=none] (63) at (0.5, -3.75) {1};
		\node [style=none] (64) at (1.25, -3.75) {2};
		\node [style=none] (65) at (2, -3.75) {3};
		\node [style=none] (66) at (2.75, -3.75) {4};
		\node [style=none] (67) at (1.25, -3.25) {rot};
		\node [style=black] (77) at (-0.25, -7.5) {6};
		\node [style=black] (78) at (-0.25, -12) {$\tinygeq7$};
		\node [style=black] (79) at (2.75, -12) {$\tinygeq7$};
		\node [style=black] (80) at (1.25, -10.5) {6};
		\node [style=black] (81) at (2, -11.25) {$\tinygeq7$};
		\node [style=black] (82) at (0.5, -11.25) {6};
		\node [style=black] (83) at (1.25, -12) {$\tinygeq7$};
		\node [style=black] (85) at (0.5, -8.25) {6};
		\node [style=black] (86) at (1.25, -9) {6};
		\node [style=black] (87) at (-0.25, -9) {6};
		\node [style=black] (88) at (0.5, -9.75) {6};
		\node [style=bruh] (91) at (2.75, -9) {$\tinygeq7$};
		\node [style=black] (96) at (2.75, -10.5) {$\tinygeq7$};
		\node [style=bruh] (99) at (2, -9.75) {6};
		\node [style=black] (100) at (-0.25, -10.5) {6};
		\node [style=none] (111) at (-1.25, -11.25) {-8};
		\node [style=none] (112) at (-1.25, -12) {-9};
	\end{pgfonlayer}
	\begin{pgfonlayer}{edgelayer}
		\draw (21) to (23);
		\draw (23) to (34);
		\draw (23) to (35);
		\draw (34) to (39);
		\draw (35) to (39);
		\draw (35) to (37);
		\draw (39) to (42);
		\draw (37) to (42);
		\draw (37) to (41);
		\draw (42) to (43);
		\draw (41) to (43);
		\draw (80) to (82);
		\draw (80) to (81);
		\draw (82) to (83);
		\draw (81) to (83);
		\draw (85) to (87);
		\draw (85) to (86);
		\draw (87) to (88);
		\draw (86) to (88);
		\draw (77) to (39);
		\draw (85) to (77);
		\draw (85) to (42);
		\draw (86) to (43);
		\draw (43) to (91);
		\draw (91) to (99);
		\draw (99) to (86);
		\draw (88) to (80);
		\draw (80) to (99);
		\draw (99) to (96);
		\draw (96) to (81);
		\draw (81) to (79);
		\draw (82) to (78);
		\draw (100) to (82);
		\draw (100) to (88);
	\end{pgfonlayer}
\end{tikzpicture}
    }
    \caption{$5_2$ and $m(5_2)$ censuses}
    \label{fig:5_2}
\end{figure}

\begin{figure}[H]
    \centering
    \resizebox{\linewidth}{!}{
    \tikzstyle{black}=[fill=white, draw=black, shape=circle, minimum size=0.8cm]
\tikzstyle{bruh}=[fill=yellow, draw=black, shape=circle, minimum size=0.8cm]
\tikzstyle{none}=[fill=none, draw=none, shape=circle]

\pgfdeclarelayer{nodelayer}
\pgfdeclarelayer{edgelayer}
\pgfsetlayers{edgelayer,nodelayer}

\begin{tikzpicture}
	\begin{pgfonlayer}{nodelayer}
		\node [style=black] (14) at (-0.25, -2.25) {6};
		\node [style=black] (18) at (0.5, -3) {6};
		\node [style=black] (19) at (1.25, -3.75) {6};
		\node [style=bruh] (20) at (2, -4.5) {$\tinygeq7$};
		\node [style=black] (21) at (-0.25, -3.75) {6};
		\node [style=black] (23) at (0.5, -4.5) {6};
		\node [style=bruh] (30) at (2.75, -5.25) {$\tinygeq7$};
		\node [style=black] (34) at (-0.25, -5.25) {6};
		\node [style=bruh] (35) at (1.25, -5.25) {6};
		\node [style=bruh] (37) at (2, -6) {6};
		\node [style=black] (39) at (0.5, -6) {6};
		\node [style=black] (41) at (2.75, -6.75) {$\tinygeq7$};
		\node [style=black] (42) at (1.25, -6.75) {6};
		\node [style=bruh] (43) at (2, -7.5) {$\tinygeq7$};
		\node [style=black] (47) at (3.5, -6) {$\tinygeq7$};
		\node [style=none] (50) at (-1.25, -3.75) {-7};
		\node [style=none] (52) at (-2, -5.5) {tb};
		\node [style=none] (53) at (-1.25, -4.5) {-8};
		\node [style=none] (54) at (-1.25, -5.25) {-9};
		\node [style=none] (55) at (-1.25, -6) {-10};
		\node [style=none] (56) at (-1.25, -6.75) {-11};
		\node [style=none] (57) at (-1.25, -7.5) {-12};
		\node [style=none] (58) at (-1.25, -8.25) {-13};
		\node [style=none] (59) at (-1.25, -9) {-14};
		\node [style=none] (62) at (-0.25, -1.5) {0};
		\node [style=none] (63) at (0.5, -1.5) {1};
		\node [style=none] (64) at (1.25, -1.5) {2};
		\node [style=none] (65) at (2, -1.5) {3};
		\node [style=none] (66) at (2.75, -1.5) {4};
		\node [style=none] (67) at (2, -1) {rot};
		\node [style=none] (73) at (3.5, -1.5) {5};
		\node [style=black] (77) at (-0.25, -6.75) {6};
		\node [style=black] (85) at (0.5, -7.5) {6};
		\node [style=bruh] (86) at (1.25, -8.25) {6};
		\node [style=black] (87) at (-0.25, -8.25) {6};
		\node [style=black] (88) at (0.5, -9) {$\tinygeq7$};
		\node [style=bruh] (89) at (3.5, -7.5) {6};
		\node [style=black] (91) at (2.75, -8.25) {$\tinygeq7$};
		\node [style=black] (97) at (3.5, -9) {$\tinygeq7$};
		\node [style=black] (99) at (2, -9) {$\tinygeq7$};
		\node [style=none] (101) at (-1.25, -2.25) {-5};
		\node [style=none] (104) at (-1.25, -3) {-6};
		\node [style=bruh] (105) at (4.25, -6.75) {$\tinygeq7$};
		\node [style=none] (106) at (4.25, -1.5) {6};
		\node [style=black] (107) at (4.25, -8.25) {$\tinygeq7$};
	\end{pgfonlayer}
	\begin{pgfonlayer}{edgelayer}
		\draw (14) to (18);
		\draw (18) to (19);
		\draw (18) to (21);
		\draw (19) to (23);
		\draw (21) to (23);
		\draw (23) to (34);
		\draw (23) to (35);
		\draw (34) to (39);
		\draw (19) to (20);
		\draw (20) to (35);
		\draw (20) to (30);
		\draw (35) to (39);
		\draw (35) to (37);
		\draw (30) to (37);
		\draw (39) to (42);
		\draw (37) to (42);
		\draw (37) to (41);
		\draw (42) to (43);
		\draw (41) to (43);
		\draw (30) to (47);
		\draw (47) to (41);
		\draw (85) to (87);
		\draw (85) to (86);
		\draw (87) to (88);
		\draw (86) to (88);
		\draw (89) to (91);
		\draw (77) to (39);
		\draw (85) to (77);
		\draw (85) to (42);
		\draw (86) to (43);
		\draw (89) to (41);
		\draw (43) to (91);
		\draw (97) to (91);
		\draw (91) to (99);
		\draw (99) to (86);
		\draw (105) to (89);
		\draw (47) to (105);
		\draw (89) to (107);
		\draw (107) to (97);
	\end{pgfonlayer}
\end{tikzpicture}
    \tikzstyle{black}=[fill=white, draw=black, shape=circle, minimum size=0.8cm]
\tikzstyle{bruh}=[fill=yellow, draw=black, shape=circle, minimum size=0.8cm]
\tikzstyle{none}=[fill=none, draw=none, shape=circle]

\pgfdeclarelayer{nodelayer}
\pgfdeclarelayer{edgelayer}
\pgfsetlayers{edgelayer,nodelayer}

\begin{tikzpicture}
	\begin{pgfonlayer}{nodelayer}
		\node [style=black] (14) at (-0.25, -2.25) {6};
		\node [style=black] (18) at (0.5, -3) {6};
		\node [style=black] (19) at (1.25, -3.75) {6};
		\node [style=black] (20) at (2, -4.5) {6};
		\node [style=black] (21) at (-0.25, -3.75) {6};
		\node [style=black] (23) at (0.5, -4.5) {6};
		\node [style=black] (30) at (2.75, -5.25) {6};
		\node [style=black] (34) at (-0.25, -5.25) {6};
		\node [style=black] (35) at (1.25, -5.25) {6};
		\node [style=black] (37) at (2, -6) {6};
		\node [style=black] (39) at (0.5, -6) {6};
		\node [style=bruh] (41) at (2.75, -6.75) {6};
		\node [style=black] (42) at (1.25, -6.75) {6};
		\node [style=black] (43) at (2, -7.5) {$\tinygeq7$};
		\node [style=bruh] (47) at (3.5, -6) {$\tinygeq7$};
		\node [style=none] (50) at (-1.25, -5.25) {-7};
		\node [style=none] (52) at (-2, -4.5) {tb};
		\node [style=none] (53) at (-1.25, -6) {-8};
		\node [style=none] (54) at (-1.25, -6.75) {-9};
		\node [style=none] (55) at (-1.25, -7.5) {-10};
		\node [style=none] (62) at (-0.25, -1.5) {0};
		\node [style=none] (63) at (0.5, -1.5) {1};
		\node [style=none] (64) at (1.25, -1.5) {2};
		\node [style=none] (65) at (2, -1.5) {3};
		\node [style=none] (66) at (2.75, -1.5) {4};
		\node [style=none] (67) at (1.5, -1) {rot};
		\node [style=none] (73) at (3.5, -1.5) {5};
		\node [style=black] (77) at (-0.25, -6.75) {$\tinygeq7$};
		\node [style=black] (85) at (0.5, -7.5) {$\tinygeq7$};
		\node [style=black] (89) at (3.5, -7.5) {$\tinygeq7$};
		\node [style=none] (101) at (-1.25, -3.75) {-5};
		\node [style=none] (104) at (-1.25, -4.5) {-6};
		\node [style=none] (107) at (-1.25, -3) {-4};
		\node [style=none] (108) at (-1.25, -2.25) {-3};
	\end{pgfonlayer}
	\begin{pgfonlayer}{edgelayer}
		\draw (14) to (18);
		\draw (18) to (19);
		\draw (18) to (21);
		\draw (19) to (23);
		\draw (21) to (23);
		\draw (23) to (34);
		\draw (23) to (35);
		\draw (34) to (39);
		\draw (19) to (20);
		\draw (20) to (35);
		\draw (20) to (30);
		\draw (35) to (39);
		\draw (35) to (37);
		\draw (30) to (37);
		\draw (39) to (42);
		\draw (37) to (42);
		\draw (37) to (41);
		\draw (42) to (43);
		\draw (41) to (43);
		\draw (30) to (47);
		\draw (47) to (41);
		\draw (77) to (39);
		\draw (85) to (77);
		\draw (85) to (42);
		\draw (89) to (41);
	\end{pgfonlayer}
\end{tikzpicture}
    }
    \caption{$6_1$ and $m(6_1)$ censuses}
    \label{fig:6_1}
\end{figure}

\begin{figure}[H]
    \centering
    \scalebox{0.7}{
    \tikzstyle{black}=[fill=white, draw=black, shape=circle, minimum size=0.8cm]
\tikzstyle{bruh}=[fill=yellow, draw=black, shape=circle, minimum size=0.8cm]
\tikzstyle{none}=[fill=none, draw=none, shape=circle]

\pgfdeclarelayer{nodelayer}
\pgfdeclarelayer{edgelayer}
\pgfsetlayers{edgelayer,nodelayer}

\begin{tikzpicture}
	\begin{pgfonlayer}{nodelayer}
		\node [style=black] (23) at (0.5, -2.25) {6};
		\node [style=black] (34) at (-0.25, -3) {6};
		\node [style=bruh] (35) at (1.25, -3) {6};
		\node [style=bruh] (37) at (2, -3.75) {$\tinygeq7$};
		\node [style=black] (39) at (0.5, -3.75) {6};
		\node [style=bruh] (42) at (1.25, -4.5) {6};
		\node [style=black] (43) at (2, -5.25) {$\tinygeq7$};
		\node [style=none] (52) at (-2, -4) {tb};
		\node [style=none] (56) at (-1.25, -2.25) {-14};
		\node [style=none] (57) at (-1.25, -3) {-15};
		\node [style=none] (58) at (-1.25, -3.75) {-16};
		\node [style=none] (59) at (-1.25, -4.5) {-17};
		\node [style=none] (60) at (-1.25, -5.25) {-18};
		\node [style=none] (61) at (-1.25, -6) {-19};
		\node [style=none] (62) at (-0.25, -1.5) {0};
		\node [style=none] (63) at (0.5, -1.5) {1};
		\node [style=none] (64) at (1.25, -1.5) {2};
		\node [style=none] (65) at (2, -1.5) {3};
		\node [style=none] (67) at (1.5, -1) {rot};
		\node [style=black] (77) at (-0.25, -4.5) {6};
		\node [style=black] (85) at (0.5, -5.25) {6};
		\node [style=black] (86) at (1.25, -6) {$\tinygeq7$};
		\node [style=black] (87) at (-0.25, -6) {$\tinygeq7$};
		\node [style=none] (101) at (2.75, -1.5) {4};
		\node [style=black] (102) at (2.75, -4.5) {$\tinygeq7$};
		\node [style=none] (103) at (3.5, -1.5) {5};
		\node [style=bruh] (104) at (2, -2.25) {7-9};
		\node [style=black] (105) at (3.5, -2.25) {7-9};
		\node [style=black] (106) at (2.75, -3) {$\tinygeq7$};
		\node [style=black] (107) at (2.75, -6) {$\tinygeq7$};
		\node [style=black] (108) at (3.5, -3.75) {$\tinygeq7$};
		\node [style=black] (109) at (3.5, -5.25) {$\tinygeq7$};
	\end{pgfonlayer}
	\begin{pgfonlayer}{edgelayer}
		\draw (23) to (34);
		\draw (23) to (35);
		\draw (34) to (39);
		\draw (35) to (39);
		\draw (35) to (37);
		\draw (39) to (42);
		\draw (37) to (42);
		\draw (42) to (43);
		\draw (85) to (87);
		\draw (85) to (86);
		\draw (77) to (39);
		\draw (85) to (77);
		\draw (85) to (42);
		\draw (86) to (43);
		\draw (37) to (102);
		\draw (102) to (43);
		\draw (35) to (104);
		\draw (37) to (106);
		\draw (104) to (106);
		\draw (106) to (105);
		\draw (106) to (108);
		\draw (108) to (102);
		\draw (43) to (107);
		\draw (109) to (102);
		\draw (109) to (107);
	\end{pgfonlayer}
\end{tikzpicture}
    \tikzstyle{black}=[fill=white, draw=black, shape=circle, minimum size=0.8cm]
\tikzstyle{bruh}=[fill=yellow, draw=black, shape=circle, minimum size=0.8cm]
\tikzstyle{none}=[fill=none, draw=none, shape=circle]

\pgfdeclarelayer{nodelayer}
\pgfdeclarelayer{edgelayer}
\pgfsetlayers{edgelayer,nodelayer}

\begin{tikzpicture}
	\begin{pgfonlayer}{nodelayer}
		\node [style=black] (34) at (-0.25, -2.25) {6};
		\node [style=black] (39) at (0.5, -3) {6};
		\node [style=black] (42) at (1.25, -3.75) {6};
		\node [style=bruh] (43) at (2, -4.5) {$\tinygeq7$};
		\node [style=none] (52) at (-2, -5) {tb};
		\node [style=none] (56) at (-1.25, -2.25) {5};
		\node [style=none] (57) at (-1.25, -3) {4};
		\node [style=none] (58) at (-1.25, -3.75) {3};
		\node [style=none] (59) at (-1.25, -4.5) {2};
		\node [style=none] (60) at (-1.25, -5.25) {1};
		\node [style=none] (61) at (-1.25, -6) {0};
		\node [style=none] (62) at (-0.25, -1.5) {0};
		\node [style=none] (63) at (0.5, -1.5) {1};
		\node [style=none] (64) at (1.25, -1.5) {2};
		\node [style=none] (65) at (2, -1.5) {3};
		\node [style=none] (67) at (1.25, -1) {rot};
		\node [style=black] (77) at (-0.25, -3.75) {6};
		\node [style=black] (85) at (0.5, -4.5) {6};
		\node [style=bruh] (86) at (1.25, -5.25) {6};
		\node [style=black] (87) at (-0.25, -5.25) {6};
		\node [style=none] (101) at (2.75, -1.5) {4};
		\node [style=black] (107) at (2.75, -5.25) {$\tinygeq7$};
		\node [style=none] (110) at (-1.25, -6.75) {-1};
		\node [style=none] (111) at (-1.25, -7.5) {-2};
		\node [style=black] (112) at (0.5, -6) {6};
		\node [style=black] (113) at (2, -6) {$\tinygeq7$};
		\node [style=black] (114) at (-0.25, -6.75) {6};
		\node [style=black] (115) at (1.25, -6.75) {$\tinygeq7$};
		\node [style=black] (116) at (0.5, -7.5) {$\tinygeq7$};
		\node [style=black] (117) at (2, -7.5) {$\tinygeq7$};
		\node [style=black] (118) at (2.75, -6.75) {$\tinygeq7$};
	\end{pgfonlayer}
	\begin{pgfonlayer}{edgelayer}
		\draw (34) to (39);
		\draw (39) to (42);
		\draw (42) to (43);
		\draw (85) to (87);
		\draw (85) to (86);
		\draw (77) to (39);
		\draw (85) to (77);
		\draw (85) to (42);
		\draw (86) to (43);
		\draw (43) to (107);
		\draw (87) to (112);
		\draw (112) to (86);
		\draw (86) to (113);
		\draw (113) to (107);
		\draw (113) to (118);
		\draw (118) to (117);
		\draw (117) to (115);
		\draw (115) to (113);
		\draw (112) to (115);
		\draw (115) to (116);
		\draw (116) to (114);
		\draw (114) to (112);
	\end{pgfonlayer}
\end{tikzpicture}
    }
    \caption{$7_1$ and $m(7_1)$ censuses}
    \label{fig:7_1}
\end{figure}

\vspace{-0.5cm}

\begin{figure}[H]
    \centering
    \scalebox{0.7}{
    \tikzstyle{black}=[fill=white, draw=black, shape=circle, minimum size=0.8cm]
\tikzstyle{bruh}=[fill=yellow, draw=black, shape=circle, minimum size=0.8cm]
\tikzstyle{none}=[fill=none, draw=none, shape=circle]

\pgfdeclarelayer{nodelayer}
\pgfdeclarelayer{edgelayer}
\pgfsetlayers{edgelayer,nodelayer}

\begin{tikzpicture}
	\begin{pgfonlayer}{nodelayer}
		\node [style=black] (18) at (0.5, -6) {6};
		\node [style=black] (19) at (1.25, -6.75) {6};
		\node [style=bruh] (20) at (2, -7.5) {$\tinygeq7$};
		\node [style=black] (21) at (-0.25, -6.75) {6};
		\node [style=black] (23) at (0.5, -7.5) {6};
		\node [style=black] (34) at (-0.25, -8.25) {6};
		\node [style=bruh] (35) at (1.25, -8.25) {6};
		\node [style=bruh] (37) at (2, -9) {$\tinygeq7$};
		\node [style=black] (39) at (0.5, -9) {6};
		\node [style=bruh] (42) at (1.25, -9.75) {6};
		\node [style=black] (43) at (2, -10.5) {$\tinygeq7$};
		\node [style=none] (50) at (-1.25, -11.25) {-17};
		\node [style=none] (52) at (-2, -9.25) {tb};
		\node [style=none] (53) at (-1.25, -12) {-18};
		\node [style=none] (54) at (-1.25, -12.75) {-19};
		\node [style=none] (55) at (-1.25, -6) {-10};
		\node [style=none] (56) at (-1.25, -6.75) {-11};
		\node [style=none] (57) at (-1.25, -7.5) {-12};
		\node [style=none] (58) at (-1.25, -8.25) {-13};
		\node [style=none] (59) at (-1.25, -9) {-14};
		\node [style=none] (62) at (-0.25, -5.25) {0};
		\node [style=none] (63) at (0.5, -5.25) {1};
		\node [style=none] (64) at (1.25, -5.25) {2};
		\node [style=none] (65) at (2, -5.25) {3};
		\node [style=none] (67) at (0.75, -4.75) {rot};
		\node [style=black] (77) at (-0.25, -9.75) {6};
		\node [style=black] (85) at (0.5, -10.5) {6};
		\node [style=bruh] (86) at (1.25, -11.25) {$\tinygeq7$};
		\node [style=black] (87) at (-0.25, -11.25) {6};
		\node [style=bruh] (88) at (0.5, -12) {6};
		\node [style=black] (99) at (2, -12) {$\tinygeq7$};
		\node [style=none] (101) at (-1.25, -9.75) {-15};
		\node [style=none] (104) at (-1.25, -10.5) {-16};
		\node [style=black] (108) at (-0.25, -12.75) {$\tinygeq7$};
		\node [style=black] (109) at (1.25, -12.75) {$\tinygeq7$};
	\end{pgfonlayer}
	\begin{pgfonlayer}{edgelayer}
		\draw (18) to (19);
		\draw (18) to (21);
		\draw (19) to (23);
		\draw (21) to (23);
		\draw (23) to (34);
		\draw (23) to (35);
		\draw (34) to (39);
		\draw (19) to (20);
		\draw (20) to (35);
		\draw (35) to (39);
		\draw (35) to (37);
		\draw (39) to (42);
		\draw (37) to (42);
		\draw (42) to (43);
		\draw (85) to (87);
		\draw (85) to (86);
		\draw (87) to (88);
		\draw (86) to (88);
		\draw (77) to (39);
		\draw (85) to (77);
		\draw (85) to (42);
		\draw (86) to (43);
		\draw (99) to (86);
		\draw (88) to (108);
		\draw (109) to (99);
		\draw (88) to (109);
	\end{pgfonlayer}
\end{tikzpicture}
    \tikzstyle{black}=[fill=white, draw=black, shape=circle, minimum size=0.8cm]
\tikzstyle{bruh}=[fill=yellow, draw=black, shape=circle, minimum size=0.8cm]
\tikzstyle{none}=[fill=none, draw=none, shape=circle]

\pgfdeclarelayer{nodelayer}
\pgfdeclarelayer{edgelayer}
\pgfsetlayers{edgelayer,nodelayer}

\begin{tikzpicture}
	\begin{pgfonlayer}{nodelayer}
		\node [style=black] (34) at (-0.25, -2.25) {6};
		\node [style=black] (39) at (0.5, -3) {6};
		\node [style=bruh] (42) at (1.25, -3.75) {$\tinygeq7$};
		\node [style=none] (52) at (-2, -4) {tb};
		\node [style=none] (57) at (-1.25, -6) {-4};
		\node [style=none] (58) at (-1.25, -5.25) {-3};
		\node [style=none] (60) at (-1.25, -2.25) {1};
		\node [style=none] (61) at (-1.25, -3) {0};
		\node [style=none] (62) at (-0.25, -1.5) {0};
		\node [style=none] (63) at (0.5, -1.5) {1};
		\node [style=none] (64) at (1.25, -1.5) {2};
		\node [style=none] (67) at (0.5, -1) {rot};
		\node [style=black] (77) at (-0.25, -3.75) {6};
		\node [style=bruh] (85) at (0.5, -4.5) {6};
		\node [style=black] (86) at (1.25, -5.25) {$\tinygeq7$};
		\node [style=black] (87) at (-0.25, -5.25) {6};
		\node [style=none] (110) at (-1.25, -3.75) {-1};
		\node [style=none] (111) at (-1.25, -4.5) {-2};
		\node [style=black] (112) at (0.5, -6) {$\tinygeq7$};
	\end{pgfonlayer}
	\begin{pgfonlayer}{edgelayer}
		\draw (34) to (39);
		\draw (39) to (42);
		\draw (85) to (87);
		\draw (85) to (86);
		\draw (77) to (39);
		\draw (85) to (77);
		\draw (85) to (42);
		\draw (87) to (112);
		\draw (112) to (86);
	\end{pgfonlayer}
\end{tikzpicture}
    }
    \caption{$7_2$ and $m(7_2)$ censuses}
    \label{fig:7_2}
\end{figure}
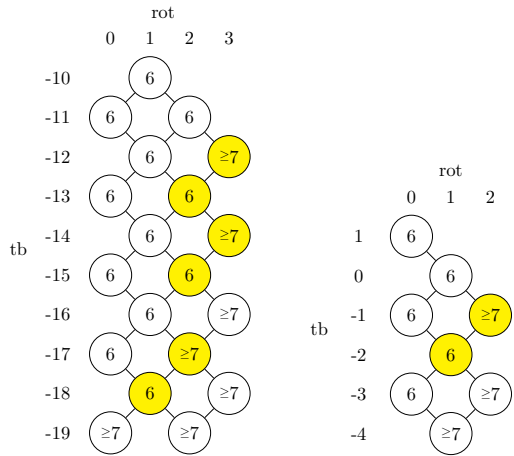

\vspace{-0.5cm}

\begin{figure}[H]
    \centering
    \scalebox{0.7}{
    \tikzstyle{black}=[fill=white, draw=black, shape=circle, minimum size=0.8cm]
\tikzstyle{bruh}=[fill=yellow, draw=black, shape=circle, minimum size=0.8cm]
\tikzstyle{none}=[fill=none, draw=none, shape=circle]

\pgfdeclarelayer{nodelayer}
\pgfdeclarelayer{edgelayer}
\pgfsetlayers{edgelayer,nodelayer}

\begin{tikzpicture}
	\begin{pgfonlayer}{nodelayer}
		\node [style=black] (21) at (-0.25, -2.25) {7};
		\node [style=bruh] (23) at (0.5, -3) {7};
		\node [style=bruh] (34) at (-0.25, -3.75) {6};
		\node [style=bruh] (35) at (1.25, -3.75) {$\tinygeq7$};
		\node [style=bruh] (39) at (0.5, -4.5) {6};
		\node [style=black] (42) at (1.25, -5.25) {$\tinygeq7$};
		\node [style=none] (52) at (-2, -4) {tb};
		\node [style=none] (56) at (-1.25, -2.25) {-7};
		\node [style=none] (57) at (-1.25, -3) {-8};
		\node [style=none] (58) at (-1.25, -3.75) {-9};
		\node [style=none] (59) at (-1.25, -4.5) {-10};
		\node [style=none] (60) at (-1.25, -5.25) {-11};
		\node [style=none] (61) at (-1.25, -6) {-12};
		\node [style=none] (62) at (-0.25, -1.5) {0};
		\node [style=none] (63) at (0.5, -1.5) {1};
		\node [style=none] (64) at (1.25, -1.5) {2};
		\node [style=none] (67) at (0.5, -1) {rot};
		\node [style=black] (77) at (-0.25, -5.25) {6};
		\node [style=black] (85) at (0.5, -6) {$\tinygeq7$};
		\node [style=none] (103) at (1.25, -2.25) {?};
	\end{pgfonlayer}
	\begin{pgfonlayer}{edgelayer}
		\draw (21) to (23);
		\draw (23) to (34);
		\draw (23) to (35);
		\draw (34) to (39);
		\draw (35) to (39);
		\draw (39) to (42);
		\draw (77) to (39);
		\draw (85) to (77);
		\draw (85) to (42);
	\end{pgfonlayer}
\end{tikzpicture}
    \tikzstyle{black}=[fill=white, draw=black, shape=circle, minimum size=0.8cm]
\tikzstyle{bruh}=[fill=yellow, draw=black, shape=circle, minimum size=0.8cm]
\tikzstyle{none}=[fill=none, draw=none, shape=circle]

\pgfdeclarelayer{nodelayer}
\pgfdeclarelayer{edgelayer}
\pgfsetlayers{edgelayer,nodelayer}

\begin{tikzpicture}
	\begin{pgfonlayer}{nodelayer}
		\node [style=black] (21) at (-0.25, -2.25) {6};
		\node [style=black] (23) at (0.5, -3) {6};
		\node [style=black] (34) at (-0.25, -3.75) {$\tinygeq7$};
		\node [style=black] (35) at (1.25, -3.75) {$\tinygeq7$};
		\node [style=none] (52) at (-2, -3) {tb};
		\node [style=none] (56) at (-1.25, -2.25) {-3};
		\node [style=none] (57) at (-1.25, -3) {-4};
		\node [style=none] (58) at (-1.25, -3.75) {-5};
		\node [style=none] (62) at (-0.25, -1.5) {0};
		\node [style=none] (63) at (0.5, -1.5) {1};
		\node [style=none] (64) at (1.25, -1.5) {2};
		\node [style=none] (67) at (0.5, -1) {rot};
		\node [style=none] (103) at (1.25, -2.25) {?};
	\end{pgfonlayer}
	\begin{pgfonlayer}{edgelayer}
		\draw (21) to (23);
		\draw (23) to (34.center);
		\draw (23) to (35.center);
	\end{pgfonlayer}
\end{tikzpicture}
    }
    \caption{$8_1$ and $m(8_1)$ censuses}
    \label{fig:8_1}
\end{figure}


\setcounter{table}{0}

\twocolumn[
\section{Minimal Mosaics}\label{min-mosaics}
In this section, we provide computationally derived mosaics demonstrating the upper bounds shown in Appendix \ref{census-list}. The mosaics listed are of the minimal size necessary to depict their respective Legendrian knots, and follow the format established in Section \ref{sec:census_results}. These mosaics can be converted into images using the python program \texttt{to\_image.py}, which can be found at \cite{code}. 
\vspace{20pt}
]

\tablecaption{Minimal Mosaics for Legendrian knots $\Lambda$ with $m(\Lambda) \le 6$.}

\setlength{\tabcolsep}{4pt}
\tiny
\tablehead{
\hline
\cellcolor{black!15}tb    & \cellcolor{black!15}rot  & \cellcolor{black!15}Minimal Mosaic \\
\hline \\[-4pt]
}
\begin{supertabular}[h]{c c r}
\hline
\multicolumn{3}{c}{$0_1$}                 \\[2pt]
\hline \\[-3pt]
-1    & 0  & 2134                                  \\
-2    & 1  & 021246354                             \\
-3    & 0  & 021284340                             \\
-3    & 2  & 0021024624063554                      \\
-4    & 1  & 0000021028913434                      \\
-4    & 3  & 0002100246024062400635554             \\
-5    & 0  & 0021028428403400                      \\
-5    & 2  & 0210279139840340                      \\
-5    & 4  & 0212128746399162434635554             \\
-6    & 1  & 0021028428913434                      \\
-6    & 3  & 0002100246028162439435540             \\
-6    & 5  & 000000255510621281389846243406355554  \\
-7    & 0  & 2121398428913434                      \\
-7    & 2  & 0000000210027912898434340             \\
-7    & 4  & 000000000021021246279916398846034354  \\
-7    & 6  & 002121028746284606395916240346355554  \\
-8    & 1  & 0000000210028812898434340             \\
-8    & 3  & 0021002791279843984003400             \\
-8    & 5  & 000021000246002816028466243594355540  \\
-9    & 0  & 0000002121288843989103434             \\
-9    & 2  & 0002100284027912898434340             \\
-9    & 4  & 000000000210002871028794243991355434  \\
-9    & 6  & 002121028746287916399846243406355554  \\
-10   & 1  & 0002100284028812898434340             \\
-10   & 3  & 0212127984399712878434340             \\
-10   & 5  & 000021000246002816028794243991355434  \\
-11   & 0  & 0002121284398812898434340             \\
-11   & 2  & 0212127984398712888434340             \\
-11   & 4  & 000000002121027984279971398784034340  \\
-11   & 6  & 002121028746287954379951246624354340  \\
-12   & 1  & 0212127984398812898434340             \\
-12   & 3  & 000000002121027984279871398884034340  \\
-12   & 5  & 000021021246287816379794289791343434  \\
-13   & 0  & 0212128884398812898434340             \\
-13   & 2  & 000000002121027984279881398984034340  \\
-13   & 4  & 000021021246279916398884288891343434  \\
-13   & 6  & 021210287791379984279971398984034340  \\
-14   & 1  & 000000000210212881398884289891343434  \\
-14   & 3  & 000021021246279716398884289891343434  \\
-14   & 5  & 002121027984279971399784287840343400  \\
-15   & 0  & 000000002121028884288881398984034340  \\
-15   & 2  & 000000021210279881398884289891343434  \\
-15   & 4  & 002121027984279871398894288891343434  \\
-16   & 1  & 000000021210288881398884289891343434  \\
-16   & 3  & 000251021624279881398884289891343434  \\
-16   & 5  & 212121398846288816398794243991355434  \\
-17   & 0  & 000021021284287881388884289891343434  \\
-17   & 2  & 000021021284279881398884289891343434  \\
-17   & 4  & 212121397984297881638884629891343434  \\
-18   & 1  & 000021021284288881398884289891343434  \\
-18   & 3  & 002121027984279881398884289891343434  \\
-19   & 0  & 002121028784288881388884289891343434  \\
-19   & 2  & 002121027984288881398884289891343434  \\
-20   & 1  & 002121028884288881398884289891343434  \\
-21   & 0  & 212121398884288881398884289891343434  \\
\\[-5pt]

\hline
\multicolumn{3}{c}{$3_1$}     \\
\hline \\[-3pt]
-6    & 1  & 0021002971294663759403540             \\
-7    & 0  & 0021002971298463791603434             \\
-7    & 2  & 0210028910399710379400340             \\
-8    & 1  & 0021002871298943794003400             \\
-8    & 3  & 000000002100029710243991355984000340  \\
-9    & 0  & 0021002971298843799103434             \\
-9    & 2  & 0021002871298943799103434             \\
-9    & 4  & 000210002460029160243991355984000340  \\
-10   & 1  & 0021025891629843989103434             \\
-10   & 3  & 0021002891289843984003400             \\
-11   & 0  & 000000000210002971029884287991343434  \\
-11   & 2  & 0021002891289843989103434             \\
-11   & 4  & 000000002100028710288991379984034340  \\
-12   & 1  & 2121039891289843989103434             \\
-12   & 3  & 000000000210002891028984289840343400  \\
-12   & 5  & 000210002460028160288991379984034340  \\
-13   & 0  & 000000002121029784298881379984034340  \\
-13   & 2  & 000000000210002891028984289891343434  \\
-13   & 4  & 000000002121028784288991379984034340  \\
-14   & 1  & 000000000210212891398984289891343434  \\
-14   & 3  & 000000002121028984289871398884034340  \\
-14   & 5  & 000210002791028984289971398784034340  \\
-15   & 0  & 000000021210279891398984289891343434  \\
-15   & 2  & 000000002121028984289881398984034340  \\
-15   & 4  & 000021021284287891379984289840343400  \\
-15   & 6  & 021210287791379984289971398784034340  \\
-16   & 1  & 000000021210288891398984289891343434  \\
-16   & 3  & 000021021284287891379984289891343434  \\
-16   & 5  & 002121027984289971399884287840343400  \\
-17   & 0  & 000021021284279891398984289891343434  \\
-17   & 2  & 000021021284287891388984289891343434  \\
-17   & 4  & 002121027984289971399884287891343434  \\
-18   & 1  & 000021021284288891398984289891343434  \\
-18   & 3  & 002100028910289881398884289891343434  \\
-19   & 0  & 002121027984288891398984289891343434  \\
-19   & 2  & 002121028784288891388984289891343434  \\
-20   & 1  & 002121028884288891398984289891343434  \\
-20   & 3  & 002121028984289881398884289891343434  \\
-21   & 2  & 212121398884288891398984289891343434  \\
\\[-5pt]

\hline
\multicolumn{3}{c}{$m(3_1)$}                     \\
\hline \\[-3pt]
1     & 0  & 0021025971629943943103554             \\
0     & 1  & 0021002891299463791603434             \\
-1    & 0  & 0021002891299843794003400             \\
-1    & 2  & 0210028910397912434635554             \\
-2    & 1  & 0021002891299843799103434             \\
-2    & 3  & 000000002100028910284391395546035554  \\
-3    & 0  & 0210028710397912898434340             \\
-3    & 2  & 0212128946397162894634354             \\
-3    & 4  & 000000021210289891397466243546355554  \\
-4    & 1  & 0212128784397912894634354             \\
-4    & 3  & 000000002121028946289716397946034354  \\
-4    & 5  & 002121028946299916639746628406343554  \\
-5    & 0  & 0212128784397912898434340             \\
-5    & 2  & 000000000210212891399984287991343434  \\
-5    & 4  & 000021021246289816397466289546343554  \\
-6    & 1  & 000000002121028784279791398984034340  \\
-6    & 3  & 000000021210289891397974243891355434  \\
-6    & 5  & 002121259746628816639794627991343434  \\
-7    & 0  & 000000002121028784288791398984034340  \\
-7    & 2  & 000000021210287891397974289891343434  \\
-7    & 4  & 000210212791399984279791398984034340  \\
-8    & 1  & 000000021210287881397984289891343434  \\
-8    & 3  & 000021021246289816397794289891343434  \\
-8    & 5  & 021210279791399984279791398984034340  \\
-9    & 0  & 000021021284287791397984289891343434  \\
-9    & 2  & 000021021246289816397884289891343434  \\
-9    & 4  & 002121028946288916397994289891343434  \\
-10   & 1  & 000021021284287881397984289891343434  \\
-10   & 3  & 002121028784287991397974289891343434  \\
-11   & 0  & 002121027984287881397984289891343434  \\
-11   & 2  & 002121028784287981397984289891343434  \\
-12   & 1  & 002121028784288791398884289891343434  \\
-13   & 0  & 212121398784288791398884289891343434  \\
\\[-5pt]

\hline
\multicolumn{3}{c}{$3_1\#3_1$} \\
\hline \\[-3pt]
-11   & 0  & 255100602910629891398946039406003554  \\
-12   & 1  & 002100258910629891398946039406003554  \\
-13   & 0  & 002100258910629891398946039424003540  \\
-13   & 2  & 002100028910289891398946039406003554  \\
-14   & 1  & 002100028910289891398946039424003540  \\
-15   & 0  & 002100028910289891398984039840003400  \\
-15   & 2  & 002100028910289891398946289406343554  \\
-16   & 1  & 002100028910289891398946289424343540  \\
-17   & 0  & 002100028910289891398984289840343400  \\
-17   & 2  & 002121028984289891398946289406343554  \\
-18   & 1  & 002100028910289891398984289891343434  \\
-19   & 0  & 002121028984289891398984289840343400  \\
-20   & 1  & 002121028984289891398984289891343434  \\
-21   & 0  & 212121398984289891398984289891343434  \\
\\[-5pt]

\hline 
\multicolumn{3}{c}{$m(3_1)\#m(3_1)$}                  \\
\hline \\[-3pt]
3     & 0  & 002100029710297971379794037940003400  \\
2     & 1  & 002100029710297971379794037991003434  \\
1     & 0  & 002100029710297971379794243991355434  \\
1     & 2  & 002100029710297971639794627991343434  \\
0     & 1  & 002100029710297971379794287991343434  \\
0     & 3  & 002551029124297971639794627991343434  \\
-1    & 0  & 002121029746297916379794287991343434  \\
-1    & 2  & 002121029784297971639794627991343434  \\
-2    & 1  & 002121029784297971379794287991343434  \\
-3    & 0  & 212121399784297971379794287991343434  \\
\\[-5pt]

\hline 
\multicolumn{3}{c}{$4_1$}                            \\
\hline \\[-3pt]
-3    & 0  & 000000021210297971379466037594003540  \\
-4    & 1  & 000000002100028910299971379794034340  \\
-5    & 0  & 000000002100028710298971379794034340  \\
-5    & 2  & 000000002100028910289791394346035554  \\
-6    & 1  & 000000002100028710289791398946034354  \\
-6    & 3  & 000000002100029710289991398984034340  \\
-7    & 0  & 000000002100028910289791398784034340  \\
-7    & 2  & 000000002100028710289791398984034340  \\
-7    & 4  & 000000002551029124289991398984034340  \\
-8    & 1  & 000000002121028784289791398946034354  \\
-8    & 3  & 000000002121029784289991398984034340  \\
-8    & 5  & 000210212891399984063991062984034340  \\
-9    & 0  & 000000002121028984289791398784034340  \\
-9    & 2  & 000000002121028784289791398984034340  \\
-9    & 4  & 000021021284289891399984243940355400  \\
-10   & 1  & 000000021210287891397984289891343434  \\
-10   & 3  & 000021021284289891399984243991355434  \\
-10   & 5  & 000210212891399984277991398984034340  \\
-11   & 0  & 000021021284287891397984289840343400  \\
-11   & 2  & 000021021284289891377984289840343400  \\
-11   & 4  & 000210212891399984287991388984034340  \\
-11   & 6  & 021210289791399984289991398984034340  \\
-12   & 1  & 000021021284287891397984289891343434  \\
-12   & 3  & 002121027984289891377984289840343400  \\
-12   & 5  & 021210288891399984277991398984034340  \\
-13   & 0  & 002121027984287891397984289891343434  \\
-13   & 2  & 002121027984289891377984289891343434  \\
-13   & 4  & 021210288891399984287991388984034340  \\
-14   & 1  & 002121028784288791398984289891343434  \\
-15   & 0  & 212121398784288791398984289891343434  \\
\\[-5pt]

\hline 
\multicolumn{3}{c}{$5_1$}                            \\
\hline \\[-3pt]
-10   & 1  & 000000002100029710298991379946034354  \\
-11   & 0  & 000000002551029124298991379946034354  \\
-11   & 2  & 000000002100029710298991379984034340  \\
-12   & 1  & 000000002121029784298991379946034354  \\
-12   & 3  & 000210002460029160298991379984034340  \\
-13   & 0  & 000000021210289971399894243991355434  \\
-13   & 2  & 000000002121029784298991379984034340  \\
-14   & 1  & 000000021210289971399894287991343434  \\
-14   & 3  & 000210002871028894298991379946034354  \\
-15   & 0  & 000021021246289916399894287991343434  \\
-15   & 2  & 000021021284289971399894287940343400  \\
-15   & 4  & 000210002871028894298991379984034340  \\
-16   & 1  & 000021021284289971399894287991343434  \\
-16   & 3  & 000210212871398894298991379984034340  \\
-17   & 0  & 002121028846289916399894287991343434  \\
-17   & 2  & 002121027984289971399894287991343434  \\
-17   & 4  & 021210287871388894298991379984034340  \\
-18   & 1  & 002121028884289971399894287991343434  \\
-18   & 3  & 021210288871398894298991379984034340  \\
-19   & 0  & 212121398884289971399894287991343434  \\
\\[-5pt]

\hline 
\multicolumn{3}{c}{$m(5_1)$}                         \\
\hline \\[-3pt]
3     & 0  & 000000025100062910299791379946034354  \\
2     & 1  & 000000002100028910299791379946034354  \\
1     & 0  & 000000002100028910299791379984034340  \\
1     & 2  & 000021251246629916397994039840003400  \\
0     & 1  & 000000002121028984299791379946034354  \\
0     & 3  & 000210255971625994662791398984034340  \\
-1    & 0  & 000000002121028984299791379984034340  \\
-1    & 2  & 000021251246629916397994289840343400  \\
-1    & 4  & 021210289791397994284391395546035554  \\
-2    & 1  & 000000021210289971397994289891343434  \\
-2    & 3  & 002121028984299791639946628406343554  \\
-3    & 0  & 000021021246289916397994289891343434  \\
-3    & 2  & 000210212791398984299791379984034340  \\
-3    & 4  & 021210289891397466289166397946034354  \\
-4    & 1  & 000021021284289971397994289891343434  \\
-4    & 3  & 021210279791398984299791379984034340  \\
-5    & 0  & 002121028846289916397994289891343434  \\
-5    & 2  & 002121027984289971397994289891343434  \\
-6    & 1  & 002121028884289971397994289891343434  \\
-7    & 0  & 021210287881397984288791398984034340  \\
\\[-5pt]

\hline 
\multicolumn{3}{c}{$5_2$}                            \\
\hline \\[-3pt]
-8    & 1  & 000210025971062994299971379794034340  \\
-9    & 0  & 000000251210629891397946039406003554  \\
-9    & 2  & 002100028910299971379794037940003400  \\
-10   & 1  & 000000002100258910629791398946034354  \\
-10   & 3  & 002100028910299971639794627940343400  \\
-11   & 0  & 000000002100258910629791398984034340  \\
-11   & 2  & 000000002100028910289791398946034354  \\
-12   & 1  & 000000002100028910289791398984034340  \\
-12   & 3  & 000210002891028984299771379984034340  \\
-13   & 0  & 000000002121258984629791398984034340  \\
-13   & 2  & 000000002121028984289791398946034354  \\
-13   & 4  & 021210287891379984299771379984034340  \\
-14   & 1  & 000000002121028984289791398984034340  \\
-14   & 3  & 000210002791028984289791398946034354  \\
-15   & 0  & 000000021210289891397984289891343434  \\
-15   & 2  & 000021251284629891397984289840343400  \\
-15   & 4  & 021210287791379984289791398946034354  \\
-16   & 1  & 000021021284289891397984289840343400  \\
-16   & 3  & 002121028984289791398846289406343554  \\
-17   & 0  & 000021021284289891397984289891343434  \\
-17   & 2  & 002100028910289791398884289891343434  \\
-18   & 1  & 002121027984289891397984289891343434  \\
-19   & 0  & 002121028884289891397984289891343434  \\
-19   & 2  & 002121028984289791398884289891343434  \\
-20   & 1  & 212121398884289891397984289891343434  \\
\\[-5pt]

\hline
\multicolumn{3}{c}{$m(5_2)$}                         \\
\hline\\[-3pt] \\[-3pt]
1     & 0  & 000000002100029710298971379794034340  \\
0     & 1  & 000000002121029746298916379794034340  \\
-1    & 0  & 000000002121029784298971379794034340  \\
-1    & 2  & 000000021210297971639894627991343434  \\
-2    & 1  & 000000021210297971379894287991343434  \\
-2    & 3  & 000210212891399946298916379846034354  \\
-3    & 0  & 000021021246297916379894287991343434  \\
-3    & 2  & 000021021284297971639894627991343434  \\
-4    & 1  & 000021021284297971379894287991343434  \\
-4    & 3  & 002121027984297971639894627991343434  \\
-5    & 0  & 000210212891399984288991379984034340  \\
-5    & 2  & 000210212871399894289991398984034340  \\
-6    & 1  & 002121028884297971379894287991343434  \\
-6    & 3  & 021210279891398984298991379984034340  \\
-7    & 0  & 002121028984297791379984289891343434  \\
-7    & 2  & 002121028984289791399784287991343434  \\
-8    & 1  & 212121398984289791399784287991343434  \\
\\[-5pt]

\hline
\multicolumn{3}{c}{$6_1$}                            \\
\hline\\[-3pt]
-5    & 0  & 021000297100379910299791379946034354  \\
-6    & 1  & 000210255971602994629791398946034354  \\
-7    & 0  & 000210002971258994629791398946034354  \\
-7    & 2  & 021000289121397946299916379794034340  \\
-8    & 1  & 000210002891028984299791379946034354  \\
-9    & 0  & 000210212891398984299791379946034354  \\
-9    & 2  & 000210002891028984299791379984034340  \\
-10   & 1  & 000210212891398984299791379984034340  \\
-10   & 3  & 021210287891379984299791379984034340  \\
-11   & 0  & 021000287121397984289791398984034340  \\
-11   & 2  & 021000289121377984289791398984034340  \\
-12   & 1  & 021210287791397984289791398984034340  \\
-13   & 0  & 021210287881397984289791398984034340  \\
-13   & 2  & 021210287891397984288791398984034340  \\
\\[-5pt]

\hline
\multicolumn{3}{c}{$m(6_1)$}                         \\
\hline\\[-3pt]
-3    & 0  & 000210025971295894662971379794034340  \\
-4    & 1  & 000210002971029846298916379794034340  \\
-5    & 0  & 000210002871029894298971379794034340  \\
-5    & 2  & 000210002891029984298991379946034354  \\
-6    & 1  & 000210212871399894298971379794034340  \\
-6    & 3  & 000210002891029984298991379984034340  \\
-7    & 0  & 021210279871399894298971379794034340  \\
-7    & 2  & 000210212971399894289991398984034340  \\
-7    & 4  & 000210212891399984298991379984034340  \\
-8    & 1  & 021210288871399894298971379794034340  \\
-8    & 3  & 021210279891399984298991379984034340  \\
-9    & 2  & 021210288971399894289991398984034340  \\
-9    & 4  & 021210288891399984298991379984034340  \\
\\[-5pt]

\hline
\multicolumn{3}{c}{$7_1$}                            \\
\hline\\[-3pt]
-14   & 1  & 000210002971029894298991379946034354  \\
-15   & 0  & 000210212971399894298991379946034354  \\
-15   & 2  & 000210002971029894298991379984034340  \\
-16   & 1  & 000210212971399894298991379984034340  \\
-17   & 0  & 021210279971399894298991379984034340  \\
-17   & 2  & 021210287971389894298991379984034340  \\
-18   & 1  & 021210288971399894298991379984034340  \\
\\[-5pt]

\hline
\multicolumn{3}{c}{$m(7_1)$}                         \\
\hline\\[-3pt]
5     & 0  & 021210299971397994299791379946034354  \\
4     & 1  & 021000289100397910299791379946034354  \\
3     & 0  & 021000289100397910299791379984034340  \\
3     & 2  & 021210289791397994299791379946034354  \\
2     & 1  & 021000289121397984299791379946034354  \\
1     & 0  & 021000289121397984299791379984034340  \\
1     & 2  & 021210289891397974299791379946034354  \\
0     & 1  & 021210289791397984299791379984034340  \\
-1    & 0  & 021210289881397984299791379984034340  \\
\\[-5pt]

\hline
\multicolumn{3}{c}{$7_2$}                            \\
\hline\\[-3pt]
-10   & 1  & 021210297971379994299791379946034354  \\
-11   & 0  & 251210629891397946299716379946034354  \\
-11   & 2  & 021210289971397994299971379794034340  \\
-12   & 1  & 021210289891397946299716379946034354  \\
-13   & 0  & 021000289100397910289791398946034354  \\
-13   & 2  & 021210287891397984299791379946034354  \\
-14   & 1  & 021000289100397910289791398984034340  \\
-15   & 0  & 021000289121397984289791398946034354  \\
-15   & 2  & 021210289791397994289791398984034340  \\
-16   & 1  & 021000289121397984289791398984034340  \\
-17   & 0  & 021210289791397984289791398984034340  \\
-18   & 1  & 021210289881397984289791398984034340  \\
\\[-5pt]

\hline
\multicolumn{3}{c}{$m(7_2)$}                         \\
\hline\\[-3pt]
1     & 0  & 000210002971029894298971379794034340  \\
0     & 1  & 000210212971399894298971379794034340  \\
-1    & 0  & 021210279971399894298971379794034340  \\
-2    & 1  & 021210288971399894298971379794034340  \\
-3    & 0  & 021210289891399984298991379984034340  \\
\\[-5pt]

\hline
\multicolumn{3}{c}{$8_1$}                            \\
\hline\\[-3pt]
-9    & 0  & 251210629891397984299791379946034354  \\
-10   & 1  & 021210289891397984299791379946034354  \\
-11   & 0  & 021210289891397984299791379984034340  \\
\\[-5pt]

\hline
\multicolumn{3}{c}{$m(8_1)$}                         \\
\hline\\[-3pt]
-3    & 0  & 021210297971379894298991379946034354  \\
-4    & 1  & 021210289971399894298971379794034340  \\
\end{supertabular}
\normalsize  
\end{document}